\theoremstyle{plain}
\newtheorem{lem}{Lemma}[section]
\newtheorem{cor}[lem]{Corollary}
\newtheorem{prop}[lem]{Proposition}
\newtheorem{thm}[lem]{Theorem}
\theoremstyle{definition}
\newtheorem{defn}[lem]{Definition}
\newtheorem{ex}[lem]{Example}
\newtheorem{question}[lem]{Question}
\newtheorem{disc}[lem]{Remark}
\newtheorem{notn}[lem]{Notation}
\newtheorem{fact}[lem]{Fact}
\newcommand{\cat}[1]{\mathcal{#1}}
\newcommand{\catd}{\cat{D}}
\newcommand{\pd}{\operatorname{pd}}
\newcommand{\rank}{\operatorname{rank}}	
\newcommand{\amp}{\operatorname{amp}}
\newcommand{\ann}{\operatorname{Ann}}
\newcommand{\mspec}{\operatorname{m-Spec}}
\newcommand{\lotimes}{\otimes^{\mathbf{L}}}
\newcommand{\HH}{\operatorname{H}}
\newcommand{\Hom}{\operatorname{Hom}}	
\newcommand{\spec}{\operatorname{Spec}}
\newcommand{\tor}{\operatorname{Tor}}
\newcommand{\shift}{\mathsf{\Sigma}}
\newcommand{\ideal}[1]{\mathfrak{#1}}
\newcommand{\m}{\ideal{m}}
\newcommand{\p}{\ideal{p}}
\newcommand{\q}{\ideal{q}}
\newcommand{\fa}{\ideal{a}}
\newcommand{\fb}{\ideal{b}}
\newcommand{\fc}{\ideal{c}}
\newcommand{\comp}[1]{\widehat{#1}}
\newcommand{\ol}{\overline}
\newcommand{\supp}{\operatorname{supp}}
\newcommand{\Supp}{\operatorname{Supp}}
\newcommand{\VE}{\operatorname{V}}
\newcommand{\cosupp}{\operatorname{co-supp}}
\newcommand{\bbz}{\mathbb{Z}}
\newcommand{\xra}{\xrightarrow}
\newcommand{\res}{\xra{\simeq}}
\newcommand{\x}{\underline{x}}
\renewcommand{\geq}{\geqslant}
\renewcommand{\leq}{\leqslant}
\newcommand{\Ext}[4][R]{\operatorname{Ext}_{#1}^{#2}(#3,#4)}	
\newcommand{\Rhom}[3][R]{\mathbf{R}\!\operatorname{Hom}_{#1}(#2,#3)}	
\newcommand{\Lotimes}[3][R]{#2\otimes^{\mathbf{L}}_{#1}#3}
\newcommand{\Otimes}[3][R]{#2\otimes_{#1}#3}
\renewcommand{\Hom}[3][R]{\operatorname{Hom}_{#1}(#2,#3)}
\newcommand{\Jac}{\operatorname{J}}
\newcommand{\LL}[2]{\mathbf{L}\Lambda^{\ideal{#1}}(#2)}
\newcommand{\LLL}[2]{\mathbf{L}\widehat\Lambda^{\ideal{#1}}(#2)}
\newcommand{\RG}[2]{\mathbf{R}\Gamma_{\ideal{#1}}(#2)}
\newcommand{\Comp}[2]{\widehat{#1}^{\ideal{#2}}}
\newcommand{\rad}[1]{\operatorname{rad}(#1)}
\newcommand{\ssm}{\smallsetminus}
\newcommand{\Cosupp}{\operatorname{Co-supp}}
\newcommand{\catdfb}{\catd_{\text{b}}^{\text{f}}}
\newcommand{\catdb}{\catd_{\text{b}}}
\newcommand{\catdf}{\catd^{\text{f}}}
\numberwithin{equation}{lem}
\begin{document}

\bibliographystyle{amsplain}

\author{Sean Sather-Wagstaff}

\address{Sean Sather-Wagstaff, Department of Mathematics,
NDSU Dept \# 2750,
PO Box 6050,
Fargo, ND 58108-6050
USA}

\email{sean.sather-wagstaff@ndsu.edu}

\urladdr{http://www.ndsu.edu/pubweb/\~{}ssatherw/}

\thanks{
%This material is based on work supported by North Dakota EPSCoR and 
%National Science Foundation Grant EPS-0814442.
Sean Sather-Wagstaff was supported in part by a grant from the NSA}

\author{Richard Wicklein}

\address{Richard Wicklein, Mathematics and Physics Department, MacMurray College, 447 East College Ave., Jacksonville, IL 62650, USA}

\email{richard.wicklein@mac.edu}

\title{Support and adic finiteness for complexes}

\date{\today}

%\dedicatory{}

\keywords{Adically finite complexes, cofinite complexes, derived categories, Koszul homology, local cohomology, local homology}
\subjclass[2010]{
13B35,
%: Completion 
13D02,
%: Syzygies, resolutions, complexes
13D09,
%: Derived categories
13D45,
%: Local cohomology
13J10
%: Complete rings, completion 
}

\begin{abstract}
Let $X$ be a chain complex over a commutative noetherian ring $R$, that is, an object in the derived category $\catd(R)$.
We investigate the small support and co-support of $X$, introduced by Foxby and Benson, Iyengar, and Krause.
We show that the derived functors $\Lotimes M-$ and $\Rhom M-$ can detect isomorphisms in $\catd(R)$
between complexes with restrictions on their supports or co-supports.
In particular, 
the derived local (co)homology functors $\RG a-$ and $\LL a-$ with respect to an ideal $\fa\subsetneq R$ have the same ability.
Furthermore, we give reprove some results of Benson, Iyengar, and Krause in our setting, with more direct proofs.
Also, we 
include some computations of co-supports, since this construction is still quite mysterious.
Lastly, we investigate ``$\fa$-adically finite'' $R$-complexes, that is, the $X\in\catdb(R)$ that are $\fa$-cofinite \textit{\`a la} Hartshorne.
For instance, we characterize these complexes in terms of a finiteness condition on $\LL aX$.
%Let $X$ be a chain complex over a commutative noetherian ring $R$, that is, an object in the derived category $\mathcal{D}(R)$. We investigate the small support and co-support of $X$, introduced by Foxby and Benson, Iyengar, and Krause. We show that the derived functors $M \otimes_R^{\mathbf{L}} -$ and $\mathbf{R}\operatorname{Hom}_R(M,-)$ can detect isomorphisms in $\mathcal{D}(R)$ between complexes with restrictions on their supports or co-supports. In particular, the derived local (co)homology functors $\mathbf{R}\Gamma_{\mathfrak{a}}(-)$ and $\mathbf{L}\Lambda_{\mathfrak{a}}(-)$ with respect to an ideal $\mathfrak{a}\subsetneq R$ have the same ability. Furthermore, we give reprove some results of Benson, Iyengar, and Krause in our setting, with more direct proofs. Also, we include some computations of co-supports, since this construction is still quite mysterious. Lastly, we investigate ``$\mathfrak{a}$-adically finite'' $R$-complexes, that is, the $X\in\mathcal{D}(R)$ that are $\mathfrak{a}$-cofinite \textit{\`a la} Hartshorne. For instance, we characterize these complexes in terms of a finiteness condition on $\mathbf{L}\Lambda_{\mathfrak{a}}(X)$.
\end{abstract}

\maketitle

%\tableofcontents

\section{Introduction} \label{sec130805a}
Throughout this paper let $R$ be a commutative noetherian ring, let $\fa \subsetneq R$ be a proper ideal of $R$, and let $\Comp{R}{a}$ be the $\fa$-adic completion of $R$.
We work in the derived category $\catd(R)$ the objects of which are the $R$-complexes, indexed homologically
$$X=\cdots\to X_{i+1}\xra{\partial^X_{i+1}}X_i\to\cdots.$$
We let $\Lambda^{\fa}(-)$ denote the $\fa$-adic completion functor, and
$\Gamma_{\fa}(-)$ is the $\fa$-torsion functor.
The associated left- and right-derived functors, respectively, are $\LL a-$ and $\RG a-$.
(See Section~\ref{sec140109b} for some background information on these topics.)
The left- and right-derived functors of $\Otimes --$ and $\Hom --$ are denoted
$\Lotimes --$ and $\Rhom --$.

\

We investigate the small support $\supp_R(X)$ of Foxby~\cite{foxby:bcfm}
and the co-support $\cosupp_R(X)$ of Benson, Iyengar, and Krause~\cite{benson:csc}; see Definitions~\ref{def120925c} and~\ref{defn130529a}.
Sections~\ref{sec130805b} and~\ref{sec140110a} contain alternate characterizations of these subsets 
(see Propositions~\ref{prop140111d} and~\ref{prop140111c})
and some of their basic properties.

Section~\ref{sec140710a} is devoted to some consequences of support conditions for morphisms.
For instance, we show in the next result  that a restriction of the small support or co-support  is strong enough
to guarantee that the derived functors $\Lotimes M-$ and $\Rhom M-$ can detect isomorphisms in $\catd(R)$.
It is contained in Theorems~\ref{thm130318aqq} and~\ref{thm130318aq} below.
(See also Corollaries~\ref{thm130318aqqc} and~\ref{thm130318aqc} for some  consequences
for Koszul homology and local (co)homology.)

\begin{thm}\label{thm130318ax}
Let $M\in\catd(R)$, and let
$f\colon Y \to Z$ be morphism in $\catd(R)$ such that
$\supp_R(Y), \supp_R(Z) \subseteq \supp_R(M)$
or
$\cosupp_R(Y), \cosupp_R(Z) \subseteq \supp_R(M)$.
Then the following conditions are equivalent:
\begin{enumerate}[\rm(i)]
\item 
$f$ is an isomorphism in $\catd(R)$;
\item 
$\Rhom{M}{f}$ is an isomorphism in $\catd(R)$;
\item 
$\Lotimes{M}{f}$ is an isomorphism in $\catd(R)$.
\end{enumerate}
\end{thm}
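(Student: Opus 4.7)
My plan is to reduce the three-way equivalence to a single vanishing problem for the mapping cone, and then feed it through the support/cosupport calculus developed in the preceding sections. Set $W := \cone(f)$, and use the distinguished triangle $Y \to Z \to W \to \shift Y$ to translate (i), (ii), (iii) into the vanishing statements $W \simeq 0$, $\Rhom{M}{W} \simeq 0$, and $\Lotimes{M}{W} \simeq 0$, respectively. The implications starting from (i) are functorial. Standard subadditivity of $\supp_R$ and $\cosupp_R$ along triangles transfers the hypothesis to the cone: either $\supp_R(W) \subseteq \supp_R(M)$ or $\cosupp_R(W) \subseteq \supp_R(M)$.

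The key tools will be the Benson--Iyengar--Krause identities
\[
\supp_R(\Lotimes{M}{W}) = \supp_R(M) \cap \supp_R(W), \qquad
\cosupp_R(\Rhom{M}{W}) = \supp_R(M) \cap \cosupp_R(W),
\]
together with the detection principles $X \simeq 0 \Longleftrightarrow \supp_R(X) = \emptyset \Longleftrightarrow \cosupp_R(X) = \emptyset$. In the support case, the first identity collapses to $\supp_R(\Lotimes{M}{W}) = \supp_R(W)$, giving (iii) $\Rightarrow$ (i) immediately; dually, in the cosupport case, (ii) $\Rightarrow$ (i) is handled the same way.

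The heart of the proof will be the two \emph{cross} implications: (ii) $\Rightarrow$ (i) under the support hypothesis, and (iii) $\Rightarrow$ (i) under the cosupport hypothesis. Here I would exploit tensor-hom adjunction to transport residue-field test objects across $M$. From $\Rhom{M}{W} \simeq 0$ one deduces $\Rhom{\Lotimes{\kappa(\fp)}{M}}{W} \simeq 0$ for every prime $\fp$; when $\fp \in \supp_R(M)$, the complex $\Lotimes{\kappa(\fp)}{M}$ is a nonzero coproduct of shifted copies of $\kappa(\fp)$, so this forces $\Rhom{\kappa(\fp)}{W} \simeq 0$, i.e.\ $\cosupp_R(W) \cap \supp_R(M) = \emptyset$. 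The dual manipulation on $\Lotimes{M}{W} \simeq 0$ yields $\supp_R(W) \cap \supp_R(M) = \emptyset$. The delicate step---combining such ``half'' vanishing statements with the inherited containment on $W$ to conclude that the relevant (co)support is in fact empty---is what I expect to be the main technical obstacle, and I anticipate using the alternate characterizations of $\supp_R$ and $\cosupp_R$ from Propositions~\ref{prop140111d} and~\ref{prop140111c} (or further duality between $\Lotimes{}{}$ and $\Rhom{}{}$) to bridge this gap.
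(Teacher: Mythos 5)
Your reduction to the vanishing of the cone $W$, the transfer of the (co)support hypotheses across the triangle, and the two ``easy'' directions via $\supp_R(\Lotimes MW)=\supp_R(M)\cap\supp_R(W)$ and $\cosupp_R(\Rhom MW)=\supp_R(M)\cap\cosupp_R(W)$ all match the paper's argument (Lemmas~\ref{lem130318aqq} and~\ref{lem130318aq} feeding Theorems~\ref{thm130318aqq} and~\ref{thm130318aq}). Your adjunction/coproduct manoeuvre for the cross implications is really just an unpacked form of Propositions~\ref{fact130611a} and~\ref{cor130602a}; it correctly delivers the ``half'' vanishing statements $\cosupp_R(W)\cap\supp_R(M)=\emptyset$ (resp.\ $\supp_R(W)\cap\supp_R(M)=\emptyset$).

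The genuine gap is exactly the ``delicate step'' you flag, and the tools you propose to close it will not do the job. Propositions~\ref{prop140111d} and~\ref{prop140111c} give local characterizations of $\supp_R$ and $\cosupp_R$ via $X_\p$ and $\Rhom{R_\p}{X}$ respectively --- two different test objects --- and neither adjunction nor ``further duality'' between $\Lotimes{}{}$ and $\Rhom{}{}$ converts one half-vanishing statement into the other. What actually bridges the gap is Proposition~\ref{prop140111e}\eqref{prop140111e2}: $\supp_R(W)$ and $\cosupp_R(W)$ have the \emph{same maximal elements}. With that in hand the argument closes immediately in both cross cases: if $W\not\simeq 0$, pick $\p$ maximal in $\supp_R(W)$ (equivalently in $\cosupp_R(W)$); the inherited containment puts $\p\in\supp_R(M)$, and the half-vanishing statement then says $\p$ is not in the relevant (co)support, a contradiction. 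This is precisely the paper's mechanism in the proof of Lemma~\ref{lem130318aqq}\eqref{lem130318aqqii}$\Rightarrow$\eqref{lem130318aqqi}. Without citing this coincidence of maximal elements (which the paper imports from~\cite[Theorems 4.5 and 4.13]{benson:csc}), your argument does not terminate: nothing you list lets you pass information between $\supp_R(W)$ and $\cosupp_R(W)$.
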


Also in this section, we recover (with more direct proofs)
results of Benson, Iyengar, and Krause~\cite{benson:lcstc,benson:csc}
that use (co-)support to characterize the $R$-complexes $X$ such that one of the natural morphisms
$\RG aX\to X\to\LL aX$ is an isomorphism. See Propositions~\ref{prop130619ad} and~\ref{prop130619ae} for our results
and, e.g., \cite{dwyer:cmtm, yekutieli:ccc, yekutieli:hct, yekutieli:sccmc} for more on these complexes.

In Section~\ref{sec140710b}, we give some computations of $\cosupp_R(X)$, in part, because this construction is 
not as well understood as $\supp_R(X)$.
For instance, the next result is contained in
Theorems~\ref{prop140118d} and~\ref{prop140709a}.

\begin{thm}\label{prop140118dx}
Assume that $R$ has a dualizing complex.
\begin{enumerate}[\rm(a)]
\item\label{prop140118dxa}
For  each $X\in\catdfb(R)$, one has
$$\cosupp_R(X)=\supp_R(X)\bigcap\cosupp_R(R)\subseteq\supp_R(X).$$
\item\label{prop140118dxb}
If, moreover, $R$ is a 1-dimensional  integral domain, then 
$$\cosupp_R(R)=\begin{cases}
\mspec(R)&\text{if $R$ is local and  complete, and}\\
\spec(R)&\text{otherwise.}\end{cases}$$
\end{enumerate}
\end{thm}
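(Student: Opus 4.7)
My plan is to treat the two parts separately, leveraging dualizing-complex biduality for (a) and a direct computation in dimension one for (b).

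For part (a), let $D$ be a dualizing complex for $R$. By biduality every $X\in\catdfb(R)$ satisfies $X\simeq\Rhom{\Rhom{X}{D}}{D}$, and $\Rhom{X}{D}$ is again in $\catdfb(R)$. The key technical input I would invoke is a formula of the form $\cosupp_R(\Rhom{Y}{Z})=\supp_R(Y)\cap\cosupp_R(Z)$ for $Y\in\catdfb(R)$; this is precisely the type of statement that falls out of the characterizations developed in Sections~\ref{sec130805b}--\ref{sec140710a} (applied to derived Homs). Plugging in $Y=\Rhom{X}{D}$ and $Z=D$ gives
$$\cosupp_R(X)=\supp_R(\Rhom{X}{D})\cap\cosupp_R(D).$$
Two standard features of a dualizing complex finish the job: first, $\supp_R(D)=\spec(R)$ together with biduality forces $\supp_R(\Rhom{X}{D})=\supp_R(X)$; second, applying the same Hom-cosupport formula to $R\simeq\Rhom{D}{D}$ yields $\cosupp_R(R)=\supp_R(D)\cap\cosupp_R(D)=\cosupp_R(D)$. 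Substituting back gives $\cosupp_R(X)=\supp_R(X)\cap\cosupp_R(R)\subseteq\supp_R(X)$.

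For part (b), I reduce everything to computing $\cosupp_R(R)$. Since $\dim R=1$ and $R$ is a domain, $\spec(R)=\{0\}\cup\mspec(R)$. For each $\fm\in\mspec(R)$, $R_\fm$ is a $1$-dimensional local domain, hence Cohen--Macaulay of depth $1$, so $\Ext{1}{R/\fm}{R}$ is nonzero after localizing at $\fm$; combined with the derived-Hom-from-residue-field characterization of cosupport from Section~\ref{sec140110a}, this puts every maximal ideal in $\cosupp_R(R)$. The question therefore reduces to whether the generic point lies in $\cosupp_R(R)$, that is, whether $\Rhom{K}{R}\not\simeq 0$ for $K=\kappa(0)$. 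Since $R$ is not a field one checks directly that $\Hom{K}{R}=0$, so everything hinges on $\Ext{1}{K}{R}$. From the short exact sequence $0\to R\to K\to K/R\to 0$ (with $K/R$ an $\fm$-torsion module when $R$ is local, by a Krull--Akizuki-style observation) the long exact sequence reads
$$0\to R\to\Hom{K/R}{R}\to\Ext{1}{K}{R}\to 0.$$
In the complete local case, Matlis duality identifies $\Hom{K/R}{R}\cong R$ and the map is the identity, so $\Ext{1}{K}{R}=0$; in the local non-complete case the same computation gives $\Hom{K/R}{R}\cong\Comp{R}{m}$, so the nontrivial quotient $\Comp{R}{m}/R$ embeds into $\Ext{1}{K}{R}$; the non-local case is treated by passing to a localization at a single maximal to import a nonzero class.

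The main obstacle will be the case analysis in part (b), particularly the Matlis-duality identification of $\Hom{K/R}{R}$ with $R$ or with $\Comp{R}{m}$ and the proof that the connecting map is as claimed. Part (a), by contrast, is largely formal once one combines biduality with the cosupport-of-$\mathbf{R}\!\operatorname{Hom}$ formula, the two dualizing-complex identities $\supp_R(\Rhom{X}{D})=\supp_R(X)$ and $\cosupp_R(R)=\cosupp_R(D)$ both being instances of the same machinery.
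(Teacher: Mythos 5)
Your proof of part~(a) is essentially the paper's proof: both use Grothendieck biduality $X\simeq\Rhom{\Rhom XD}D$, the formula $\cosupp_R(\Rhom YZ)=\supp_R(Y)\cap\cosupp_R(Z)$ (Proposition~\ref{cor130602a}), the support formula $\supp_R(\Rhom XD)=\supp_R(X)\cap\supp_R(D)=\supp_R(X)$ (Propositions~\ref{lem140121a} and~\ref{prop140118b}), and $\cosupp_R(D)=\cosupp_R(R)$. That part is fine.

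Part~(b) is where your proposal diverges, and it has genuine gaps. First, a computational error: starting from $0\to R\to K\to K/R\to 0$ and applying $\Hom{-}{R}$, the long exact sequence begins $0\to\Hom{K/R}R\to\Hom{K}{R}\to\Hom RR\to\Ext{1}{K/R}{R}\to\Ext{1}{K}{R}\to 0$; since $R$ is a non-field noetherian domain one has $\Hom KR=0$ (any $R$-linear image of $K$ is infinitely divisible, hence zero by Krull intersection), which forces $\Hom{K/R}R=0$ as well. So your displayed sequence has $\Hom{K/R}R$ where $\Ext{1}{K/R}{R}$ should sit, and the "Matlis duality identifies $\Hom{K/R}{R}\cong R$" step collapses. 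The module you would actually need to control is $\Ext{1}{K/R}{R}$, and Matlis duality does not identify this directly with $R$ or $\Comp Rm$ (Matlis duality governs $\Hom{K/R}{E_R(k)}$, not derived Hom into $R$); making that precise requires routing through the dualizing complex anyway. Second, and more seriously, the non-local case is not handled: passing to a localization at a maximal ideal to "import a nonzero class" is not legitimate here, because $\Hom$ and $\ext$ out of the non-finitely-generated module $K=\kappa(0)$ do not commute with localization, so nonvanishing of $\Ext[R_\fm]{1}{K}{R_\fm}$ gives no information about $\Ext{1}{K}{R}$. The paper sidesteps both problems: for the local case it cites Fact~\ref{prop140118c} (completeness $\iff$ $\cosupp_R(R)=\{\fm\}$), and for the non-local case it reduces to showing $\Rhom QD\not\simeq 0$ for a dualizing complex $D$ with minimal injective resolution $0\to Q\to E\to 0$, then wins by the rank count $\rank_Q\Hom QE\geq 2>1=\rank_Q\Hom QQ$. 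You would need to replace your $\Ext{1}{K}{R}$ argument with something like this to close the gap.
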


The paper concludes with Section~\ref{sec140109c}, wherein we investigate ``$\fa$-adically finite'' complexes;
see Definition~\ref{def120925d}.
This notion originates with work of Hartshorne~\cite{hartshorne:adc} and continues, e.g.,
in~\cite{delfino:cmlc,hartshorne:adc,kawasaki:ccma,kawasaki:ccc,melkersson:mci}.
To allow for some flexibility in the study of such complexes, we prove the following result in Theorem~\ref{thm130612a}. 
Here $\LLL aX$ is constructed like $\LL aX$, but considered as a functor from $\catd(R)\to\catd(\Comp Ra)$; see
the beginning of Section~\ref{sec140109b}.

\begin{thm}\label{cor130612a}
Let $X\in\catd_{\text b}(R)$. Then the following conditions are equivalent:
\begin{enumerate}[\rm(i)]
\item\label{cor130612a1}
One has $\Lotimes{K(\underline{x})}{X}\in\catdfb(R)$  for some (equivalently for every) generating sequence $\underline{x}$ of $\fa$;
\item\label{cor130612a2}
One has  $\Lotimes{R/\mathfrak{a}}{X}\in\catd^{\text{f}}(R)$;
\item\label{cor130612a3}
One has  $\Rhom{R/\mathfrak{a}}{X}\in\catd^{\text{f}}(R)$;
\item\label{cor130612a4}
One has $\LLL aX\in\catdfb(\Comp Ra)$.
\end{enumerate}
\end{thm}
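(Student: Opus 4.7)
The plan is to establish the four equivalences via the chain (i) $\iff$ (ii) $\iff$ (iii), mediated by the Koszul complex, together with (ii) $\iff$ (iv), via Greenlees--May duality and a derived Nakayama argument. For (i) $\iff$ (ii), the key observation is that $\fa$ annihilates $R/\fa$, so all induced differentials in $(R/\fa)\otimes_R K(\underline{x})$ vanish, giving the decomposition
\[
\Lotimes{(R/\fa)}{K(\underline{x})} \simeq \bigoplus_{i=0}^{n}\Sigma^{i}(R/\fa)^{\binom{n}{i}}.
\]
Applying this and associating the derived tensor product yields
\[
\Lotimes{(R/\fa)}{\Lotimes{K(\underline{x})}{X}} \simeq \bigoplus_{i=0}^{n}\Sigma^{i}\bigl(\Lotimes{(R/\fa)}{X}\bigr)^{\binom{n}{i}}.
\]
The implication (i) $\Rightarrow$ (ii) is then immediate: if $\Lotimes{K(\underline{x})}{X}\in\catdfb(R)$, then tensoring with the finitely generated module $R/\fa$ preserves finite generation degree-by-degree, and $\Lotimes{(R/\fa)}{X}$ appears as a direct summand, placing it in $\catdf(R)$. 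For (ii) $\Rightarrow$ (i), I would induct on $n$ using the cone triangles $K(x_1,\ldots,x_{k-1}) \xrightarrow{x_k} K(x_1,\ldots,x_{k-1}) \to K(x_1,\ldots,x_k) \to$ together with the base case $n=1$, where $\Lotimes{K(x_1)}{X}$ and $\Lotimes{(R/x_1R)}{X}$ are isomorphic in $\catd(R)$ via the short exact triangle $R \xrightarrow{x_1} R \to R/x_1R \to$. Independence of the generating sequence in (i) then follows from (ii).

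For (ii) $\iff$ (iii), I would exploit the self-duality $\Rhom{K(\underline{x})}{R} \simeq \Sigma^{-n} K(\underline{x})$. Since $K(\underline{x})$ is perfect, tensor evaluation gives $\Rhom{K(\underline{x})}{X} \simeq \Sigma^{-n} \Lotimes{K(\underline{x})}{X}$, so the Koszul-Hom and Koszul-tensor conditions on $X$ coincide. The dual decomposition $\Rhom{K(\underline{x})}{R/\fa} \simeq \bigoplus_{i=0}^{n}\Sigma^{-i}(R/\fa)^{\binom{n}{i}}$ then transports finiteness between $\Rhom{K(\underline{x})}{X}$ and $\Rhom{R/\fa}{X}$ by an argument mirroring the first step, exhibiting $\Rhom{R/\fa}{X}$ as a direct summand of $\Rhom{R/\fa}{\Rhom{K(\underline{x})}{X}}$.

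For (ii) $\iff$ (iv), Greenlees--May duality combined with the derived $\fa$-completeness of $R/\fa$ yields
\[
\Rhom{R/\fa}{X} \simeq \Rhom{R/\fa}{\LLL aX} \simeq \Rhom[\Comp Ra]{R/\fa}{\LLL aX},
\]
the last step being base change along $R \to \Comp Ra$. The direction (iv) $\Rightarrow$ (iii) is then routine: if $\LLL aX \in \catdfb(\Comp Ra)$, then $\Rhom[\Comp Ra]{R/\fa}{\LLL aX}$ has finitely generated homology over $\Comp Ra$, and $\Comp Ra$-finiteness coincides with $R$-finiteness on $\fa$-annihilated modules. For (ii) $\Rightarrow$ (iv), I would invoke a derived Nakayama lemma: a bounded, derived $\fa$-complete complex $Y$ with $\Lotimes{(R/\fa)}{Y}\in\catdfb(R/\fa)$ automatically lies in $\catdfb(\Comp Ra)$. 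Applying this to $Y=\LLL aX$, together with the identification $\Lotimes{(R/\fa)}{\LLL aX}\simeq\Lotimes{(R/\fa)}{X}$, transports (ii) to (iv).

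The hard part will be this last step. Its prerequisites include verifying that $\LLL aX\in\catdb(\Comp Ra)$ whenever $X\in\catdb(R)$ --- which follows from the \v Cech-complex model of $\RG aR$ being a bounded complex of flat modules, together with Greenlees--May --- and then the derived Nakayama lemma itself, whose proof involves tracking the convergence of the $\fa$-adic filtration spectral sequence on a derived complete complex. The Koszul bookkeeping in the first two steps is essentially mechanical once the decomposition of $\Lotimes{(R/\fa)}{K(\underline{x})}$ is exploited; it is the interplay between derived completeness, boundedness, and finite generation over $\Comp Ra$ that constitutes the technical heart of the argument.
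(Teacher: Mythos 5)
Your direct-sum decomposition $\Lotimes{(R/\fa)}{K(\underline{x})}\simeq\bigoplus_{i=0}^n\shift^i(R/\fa)^{\binom{n}{i}}$ and the resulting direct-summand argument give a valid, and genuinely more elementary, proof of (i)$\Rightarrow$(ii) than the paper's, which instead takes a degree-wise finite semi-free resolution of $\Lotimes{K}{X}$ over the Koszul complex $K$ viewed as a DG $R$-algebra (the step (vii)$\Rightarrow$(i) in Proposition~\ref{prop130610a}). The self-duality step for (iii), and the general shape of your (iv) argument, also track the paper's proof. However, your (ii)$\Rightarrow$(i) has a real gap. The base case claim that $\Lotimes{K(x_1)}{X}\simeq\Lotimes{(R/x_1R)}{X}$, via a ``triangle $R\xrightarrow{x_1}R\to R/x_1R\to$'', fails when $x_1$ is a zero-divisor: the cone of $x_1\colon R\to R$ is $K(x_1)$, which has $\HH_1(K(x_1))=(0:_R x_1)\neq 0$ in that case, so $K(x_1)\not\simeq R/(x_1)$ in $\catd(R)$. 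More fundamentally, the cone-triangle induction never actually invokes hypothesis (ii): the triangles propagate finiteness from $\Lotimes{K(x_1,\ldots,x_{k-1})}{X}$ up to $\Lotimes{K(x_1,\ldots,x_k)}{X}$, so the induction must begin from finiteness of $\Lotimes{K(x_1)}{X}$, whereas (ii) concerns $R/\fa=R/(x_1,\ldots,x_n)$, and there is no direct passage from $R/\fa$-finiteness to finiteness against $R/(x_1)$ or $K(x_1)$.

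The paper closes this direction in Proposition~\ref{prop130610a} by a genuinely different chain: from $\Lotimes{(R/\fa)}{X}\in\catdf(R)$ one first gets $\Lotimes{(R/\p)}{X}\in\catdf(R)$ for every $\p\in\VE(\fa)$ (via ring change along $R/\fa\to R/\p$), then $\Lotimes{N}{X}\in\catdf(R)$ for every finitely generated $N$ with $\Supp_R(N)\subseteq\VE(\fa)$ (by a prime filtration), then $\Lotimes{Y}{X}\in\catdf(R)$ for every $Y\in\catdfb(R)$ with $\Supp_R(Y)\subseteq\VE(\fa)$ (by soft truncation and induction on amplitude), and only then specializes to $Y=K$. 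You need a mechanism of this kind to close the equivalence. One further small point: routing (iv) through $\Lotimes$ rather than $\Rhom$ (the paper goes through (iii) via Lemmas of Shaul and Yekutieli) additionally requires the identification $\Lotimes{(R/\fa)}{\LLL aX}\simeq\Lotimes{(R/\fa)}{X}$; this is true for bounded $X$, but it needs an argument --- for instance, reduce via your decomposition to showing $\Lotimes{K}{\LL aX}\simeq\Lotimes{K}{X}$, then use that $K$ is perfect and $\Lotimes{K}{X}$ has bounded, $\fa$-annihilated homology.
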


While one may not be surprised by the equivalence of conditions~\eqref{cor130612a1}--\eqref{cor130612a3} in this result,
we did not expect them to be equivalent to condition~\eqref{cor130612a4}.
Another interesting feature  of this result is the use of techniques from differential graded algebra in the proofs of
the implications $\eqref{cor130612a2}\implies\eqref{cor130612a1}$ and $\eqref{cor130612a3}\implies\eqref{cor130612a1}$;
see Propositions~\ref{prop130610a} and~\ref{prop130613b}.

Lastly, it is worth noting that many of the results in this paper are tools for use in the sequel~\cite{sather:ascfc}.

\section{Background}\label{sec140109b} 

\subsection*{Derived Categories}
Standard references on this subject include~\cite{gelfand:moha,hartshorne:rad, verdier:cd, verdier:1}.

The quantities $\inf(X)$ and $\sup (X)$ are the infimum and supremum, respectively,
of the set $\{i\in\bbz\mid\HH_i(X)\neq 0\}$,
and $\amp(X):=\sup(X)-\inf(X)$.
Given an integer $i$, we let $\shift^iX$ denote the $i$th \emph{shift}
(or \emph{suspension}) of $X$.
Isomorphisms in $\catd(R)$ are identified by the symbol $\simeq$.

We let $\catd_+(R)$ denote the full subcategory of $\catd(R)$ consisting of complexes $X$ such that $\inf(X)>-\infty$, that is such that
$\HH_i(X)=0$ for $i\ll 0$. 
We let $\catd_-(R)$ denote the full subcategory of $\catd(R)$ consisting of complexes $X$ such that $\sup(X)<\infty$, that is such that
$\HH_i(X)=0$ for $i\gg 0$, and set $\catdb(R):=\catd_+(R)\bigcap\catd_-(R)$. 
We let $\catdf(R)$ denote the full subcategory of $\catd(R)$ consisting of complexes $X$ such that each homology module
$\HH_i(X)$ is finitely generated. 
For each $\star\in\{+,-,b\}$ we set $\catdf_\star(R):=\catdf(R)\bigcap\catd_{\star}(R)$.

An $R$-complex $F$ is \emph{semiflat} if the functor $\Otimes F-$, defined on the category of $R$-complexes, respects
injective quasiisomorphisms. 
(These are the ``DG-flat'' complexes of~\cite{avramov:hdouc}.)
A \emph{semiflat resolution} of an $R$-complex $X$ is a quasiisomorphism $F\xra\simeq X$ 
with $F$ semiflat, and one defines $\LL aX:=\Lambda^{\fa}(F)$ and
$\Lotimes XY:=\Otimes FY$ for each $R$-complex $Y$.
Every $R$-complex admits a semiflat resolution by~\cite[1.5 and 1.6]{avramov:hdouc},
and $\LL a-$ and $\Lotimes --$ define  (bi)functors on $\catd(R)$.
Since the complex $\Lambda^{\ideal{a}}(F)$ consists of $\Comp Ra$-modules and $\comp Ra$-module homomorphisms,
this also defines a functor $\catd(R)\to\catd(\Comp Ra)$ which we denote $\LLL a-$, following~\cite{shaul:tpdcf}.
This is well-defined by~\cite[Section~1]{lipman:lhcs}.
Moreover, if $F\colon \catd(\Comp Ra)\to\catd(R)$ is the forgetful functor, then we have
a natural isomorphism
$F\circ\mathbf{L}\comp\Lambda^{\fa}\simeq\mathbf{L}\Lambda^{\fa}$.

An $R$-complex $I$ is \emph{semiinjective} if the functor $\Hom -I$ converts injective quasiisomorphisms
into surjective quasiisomorphisms.
(These are the ``DG-injec-tive'' complexes of~\cite{avramov:hdouc}.)
A \emph{semiinjective resolution} of an $R$-complex $X$ is a quasiisomorphism $X\xra\simeq I$ 
with $I$ semiinjective, and one defines 
$\RG aX:=\Gamma_{\fa}(I)$
and $\Rhom YX:=\Hom YI$ for each $R$-complex $Y$.
Every $R$-complex admits a semiinjective resolution by~\cite[1.6]{avramov:hdouc}
and $\RG a-$ and $\Rhom --$ define  (bi)functors on $\catd(R)$.

\subsection*{Derived Local (Co)homology}
These notions originate in~\cite{hartshorne:rad,hartshorne:lc}, and are developed extensively, e.g., 
in~\cite{lipman:lhcs,benson:lcstc, foxby:daafuc, greenlees:dfclh, lipman:llcd}.

\begin{fact}\label{fact130619b}
Let $X\in\catd_+(R)$.
If $X\in\catdf_+(R)$, then there is a natural isomorphism
$\LL aX\simeq \Lotimes{\Comp Ra}{X}$ by~\cite[Proposition 2.7]{frankild:volh}.

Let $\x=x_1,\ldots,x_n$ be a generating sequence for $\fa$. Then 
$\RG aR$ is isomorphic in $\catd(R)$ to the \v{C}ech complex  $\check{C}(\x)$.
It follows that $\pd_R(\RG aR)<\infty$.
Indeed, the \v Cech complex $\check{C}(\x)$ is a bounded complex of direct sums of modules
of the form $R_s\cong R[T]/(sT-1)$ with $s\in R$. 
Since $R_s$ has a projective resolution
$$0\to R[T]\xra{sT-1}R[T]\to R_s\to 0$$
we conclude that $\pd_R(R_s)\leq 1$, hence $\pd_R(\RG aR)<\infty$.
\end{fact}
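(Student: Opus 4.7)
My plan is to address the three assertions of the Fact in turn.

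For the first assertion, the natural isomorphism $\LL aX\simeq \Lotimes{\Comp Ra}{X}$ for $X\in\catdf_+(R)$, my plan is as follows. Since $X$ is bounded below and has finitely generated homology, I can take a semifree resolution $F\res X$ in which each $F_i$ is a finitely generated free $R$-module (such resolutions exist by Avramov--Foxby--Halperin). The functor $\Lambda^{\fa}$ commutes with finite direct sums and sends $R$ to $\Comp Ra$, so degreewise $\Lambda^{\fa}(F_i)\cong \Otimes{\Comp Ra}{F_i}$, compatibly with differentials; hence $\Lambda^{\fa}(F)\cong\Otimes{\Comp Ra}{F}$ as $R$-complexes. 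Because $R$ is noetherian, $\Comp Ra$ is a flat $R$-module, so tensoring the semiflat resolution $F$ with $\Comp Ra$ computes $\Lotimes{\Comp Ra}{X}$, yielding the desired isomorphism. This is essentially a repackaging of the cited result of Frankild; the key point is that in the finite case one can avoid genuine derived completion and reduce everything to an ordinary tensor product.

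For the second assertion, $\RG aR\simeq \check{C}(\x)$, the plan is to identify $\RG aR$ with the \v Cech complex using induction on $n$. For $n=1$, one has $\check{C}(x_1)=(0\to R\to R_{x_1}\to 0)$, and applying $\Gamma_{(x_1)}$ to a semiinjective resolution of $R$ one checks directly that $\RG{(x_1)}R\simeq \check{C}(x_1)$ via the standard short exact sequence relating $R$, $R_{x_1}$, and the $(x_1)$-torsion submodule in each injective summand. For general $n$, I would use the factorization $\check{C}(\x)\cong \check{C}(x_1)\otimes_R\cdots\otimes_R\check{C}(x_n)$ together with the analogous derived factorization $\RG aR\simeq \RG{(x_1)}R\lotimes_R\cdots\lotimes_R\RG{(x_n)}R$; since each $\check{C}(x_i)$ is a bounded complex of flat $R$-modules, the ordinary tensor product computes the derived one, and the identifications in the one-variable case assemble to give the claim.

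For the third assertion, $\pd_R(\RG aR)<\infty$, I follow the sketch already indicated in the excerpt. By the second assertion, $\RG aR\simeq \check{C}(\x)$, a bounded complex whose terms are finite direct sums of localizations $R_s$ for various $s\in R$. The displayed resolution
\[
0\to R[T]\xra{sT-1}R[T]\to R_s\to 0
\]
shows $\pd_R(R_s)\leq 1$, so each term of $\check{C}(\x)$ has projective dimension at most $1$. A bounded complex whose terms have uniformly bounded projective dimension itself has finite projective dimension (via iterated mapping cone / hard truncation arguments on the brutal filtration), so $\pd_R(\RG aR)<\infty$.

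The main obstacle is the second assertion: giving a careful, non-circular identification of $\RG aR$ with $\check{C}(\x)$. The cleanest route is the inductive semiflat argument above, but one can alternatively cite the classical identification of local cohomology with \v Cech cohomology and derive the derived-category statement from the fact that $\check{C}(\x)$ is a bounded complex of flat modules (hence semiflat, hence computes the derived functor on $R$).
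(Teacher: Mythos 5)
Your proposal is correct and matches the paper's treatment: the paper states this as a Fact, citing Frankild for the first assertion, the standard \v Cech-complex identification for the second, and giving exactly the $R_s\cong R[T]/(sT-1)$ argument for the third, which you reproduce. Your expansions of the first two points (degreewise finite free resolution plus flatness of $\Comp Ra$, and the inductive tensor factorization of the \v Cech complex) are precisely the standard arguments underlying the cited results, so there is nothing to flag.
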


\begin{fact}\label{fact130619b'}
By~\cite[(0.3)$_{\text{aff}}$]{lipman:lhcs} and~\cite[Proposition 3.1.2]{lipman:llcd}, there are  isomorphisms
\begin{gather*}
\RG a-\simeq\Lotimes{\RG aR}{-}
\qquad\qquad\qquad
\LL a-\simeq\Rhom{\RG aR}{-}.
\end{gather*}
of functors on $\catd(R)$.
\end{fact}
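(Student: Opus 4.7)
The plan is to derive both displayed isomorphisms from the explicit \v{C}ech-complex model $\RG aR\simeq \check{C}(\underline x)$ recorded in Fact~\ref{fact130619b}, combined with standard resolution and adjunction arguments. Throughout, let $\underline x=x_1,\ldots,x_n$ be a generating sequence for $\fa$.

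First I would prove the torsion isomorphism $\RG a-\simeq\Lotimes{\RG aR}{-}$. Fix $X\in\catd(R)$ with semi-injective resolution $X\xra\simeq I$, so by definition $\RG aX=\Gamma_\fa(I)$. Because $\check{C}(\underline x)$ is a bounded complex of flat $R$-modules (products of localizations $R_s$), it is semi-flat, hence
$$\Lotimes{\RG aR}{X}\simeq\Otimes{\check{C}(\underline x)}{I}.$$
The task reduces to producing a natural quasi-isomorphism $\Otimes{\check{C}(\underline x)}{I}\simeq\Gamma_\fa(I)$. The module-level base case is classical: for any injective $R$-module $E$, the complex $\Otimes{\check{C}(\underline x)}{E}$ has cohomology $\Gamma_\fa(E)$ in degree $0$ and zero elsewhere, giving a canonical quasi-isomorphism to $\Gamma_\fa(E)$. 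To extend this to a semi-injective $I$, I would view $\Otimes{\check{C}(\underline x)}{I}$ as a double complex and argue via the spectral sequence of the filtration by \v{C}ech degree; the bounded, finite-flat-dimension nature of $\check{C}(\underline x)$ (Fact~\ref{fact130619b}) ensures convergence and that $\Otimes{\check{C}(\underline x)}{-}$ preserves quasi-isomorphisms globally, so the termwise quasi-isomorphism on injectives upgrades to the total complex.

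Next, I would obtain the completion isomorphism $\LL a-\simeq\Rhom{\RG aR}{-}$ by a uniqueness-of-adjoints argument. Standard tensor--Hom adjunction makes $\Lotimes{\RG aR}{-}$ left adjoint to $\Rhom{\RG aR}{-}$ as endofunctors of $\catd(R)$, and combining this with the first isomorphism realizes $\Rhom{\RG aR}{-}$ as a right adjoint to $\RG a$. The Greenlees--May adjunction (which I would invoke as the nontrivial input from~\cite{lipman:llcd}) independently identifies $\LL a$ as the right adjoint of $\RG a$. Uniqueness of adjoints then yields $\LL a-\simeq\Rhom{\RG aR}{-}$. Alternatively, one could work directly: for a semi-flat resolution $F\xra\simeq X$, compare $\Lambda^\fa(F)$ to $\Hom{\check{C}(\underline x)}{F}$ using the dual (telescope) picture of $\check{C}(\underline x)$, once again exploiting boundedness of $\check{C}(\underline x)$ to identify the derived and underived versions.

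The main obstacle is the spectral-sequence comparison for unbounded $X$: a semi-injective resolution $I$ may be unbounded on both sides, so the filtration argument must be handled with care to ensure convergence and to avoid $\lim^1$ difficulties. The safety net is the finite projective dimension of $\RG aR$ (Fact~\ref{fact130619b}), which means $\Lotimes{\RG aR}{-}$ is way-out in both directions and the comparison reduces via standard dévissage to the bounded case on $\catdb(R)$, where one recovers it cleanly from the injective-module computation. The adjoint-uniqueness step for the second isomorphism is then purely formal, assuming one takes Greenlees--May duality as the given input.
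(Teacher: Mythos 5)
The paper gives no argument for this Fact at all---it is quoted verbatim from \cite[(0.3)$_{\text{aff}}$]{lipman:lhcs} and \cite[Proposition 3.1.2]{lipman:llcd}---so you were reconstructing a proof from scratch, and your sketch is essentially sound. Your treatment of the torsion formula $\RG a-\simeq\Lotimes{\RG aR}{-}$ (tensor $\check{C}(\underline{x})$ against a semiinjective resolution, verify the quasi-isomorphism $\Gamma_{\fa}(E)\to\Otimes{\check{C}(\underline{x})}{E}$ for a single injective module, then totalize) is exactly the argument in \cite{lipman:lhcs}; the convergence worry you raise is genuinely dispatched by the boundedness of $\check{C}(\underline{x})$, since in each total degree only finitely many bidegrees contribute, so the relevant filtration is locally finite. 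One small point worth making explicit is that every term of a semiinjective complex is an injective module, which is what licenses the termwise base case. Where you diverge from the cited sources is the completion formula: you deduce $\LL a-\simeq\Rhom{\RG aR}{-}$ from Greenlees--May duality by uniqueness of right adjoints. This is formally valid, but note that \cite{lipman:lhcs} runs the logic in the opposite direction---they \emph{prove} the duality by first establishing $\LL a{Y}\simeq\Rhom{\check{C}(\underline{x})}{Y}$ via the telescope complex and then applying Hom-tensor adjointness---so if you want a self-contained argument you must either import Greenlees--May duality from a source that proves it independently of the Hom formula, or promote your ``alternative'' direct comparison of $\Lambda^{\fa}(F)$ with $\Hom{\mathrm{Tel}(\underline{x})}{F}$ from a parenthetical to the main line, since that telescope lemma is where the real work lives. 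In short, your proposal buys an actual proof where the paper buys only a citation; the adjoint-uniqueness shortcut is the one place where the hard content is quietly relocated into the black box it invokes.
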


\subsection*{Minimal Injective Resolutions}
For a module, the notion of a minimal injective resolution is standard. 
For complexes, one may consult~\cite{avramov:dgha,chen:sirccr}. 

\begin{defn}\label{defn130618c}
Let $X\in\catd(R)$. An injective resolution $X\res J$ is \textit{minimal} if for all $i$ the kernel of the differential $\partial_{i}^{J}: J_{i} \to J_{i+1}$ is an essential submodule of $J_i$.
\end{defn}

\begin{fact}\label{fact130622b}
Every $X\in\catd_-(R)$ has a minimal injective resolution $X\xra\simeq J$, and every such resolution satisfies
$J_i=0$ for all $i>\sup(X)$. 
If $S$ is a multiplicatively closed subset of $R$, then
the localization $S^{-1}X\xra\simeq S^{-1}J$ is a
minimal injective resolution over $S^{-1}R$.
Also, the induced morphism $\RG aX\res\Gamma_\fa(J)$ is a minimal injective resolution over $R$.
\end{fact}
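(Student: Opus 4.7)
The plan is to handle the four assertions in order, constructing the resolution, checking forced vanishing, and then verifying minimality is preserved under localization and $\Gamma_\fa$.

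For existence and the vanishing $J_i=0$ for $i>s:=\sup(X)$, first replace $X$ by its soft truncation $\tau_{\leq s}X$, which is quasi-isomorphic to $X$ and concentrated in degrees $\leq s$. Then build $J$ by the standard top-down procedure recorded in~\cite{avramov:dgha,chen:sirccr}: set $J_i=0$ for $i>s$, and inductively take injective envelopes of appropriate cokernels so that each $\ker(\partial_i^J)$ is essential in $J_i$. This gives one minimal injective resolution satisfying $J_i=0$ for $i>s$. For the ``every such resolution'' clause, suppose $J$ is any minimal injective resolution of $X$; since we may take $J$ bounded above by general theory, let $t$ be maximal with $J_t\neq 0$. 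Then $\HH_i(J)\simeq\HH_i(X)=0$ for $i>s$ forces $\ker(\partial_i^J)=\im(\partial_{i+1}^J)$ in that range. If $t>s$, then $J_{t+1}=0$ gives $\ker(\partial_t^J)=0$, contradicting essentiality in the nonzero module $J_t$. Hence $t\leq s$.

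For localization, each $S^{-1}J_i$ is injective over $S^{-1}R$: by Matlis structure (here is where Noetherianness enters), $J_i$ is a direct sum of modules $E_R(R/\fp)$, and $S^{-1}E_R(R/\fp)$ is either $0$ (when $\fp\cap S\neq\emptyset$) or $E_{S^{-1}R}(S^{-1}R/S^{-1}\fp)$. Exactness of $S^{-1}(-)$ makes $S^{-1}X\to S^{-1}J$ a quasi-isomorphism. For minimality, since $S^{-1}$ commutes with kernels, $\ker(\partial_i^{S^{-1}J})=S^{-1}\ker(\partial_i^J)$; and essentiality is preserved by localization over a noetherian ring, for if $y=x/s\in S^{-1}J_i$ is nonzero, some $r\in R$ satisfies $rx\in\ker(\partial_i^J)$ with $rx\neq 0$ (shrinking $S$ as needed), whence $ry=rx/s$ is a nonzero element of $S^{-1}\ker(\partial_i^J)$.

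For the local cohomology assertion, each $\Gamma_\fa(J_i)$ is injective, since $\Gamma_\fa(E_R(R/\fp))$ equals $E_R(R/\fp)$ when $\fa\subseteq\fp$ and vanishes otherwise. A bounded-above complex of injectives is semi-injective, so $J$ computes $\RG a$, giving the quasi-isomorphism $\RG aX\res\Gamma_\fa(J)$. Minimality follows because $\Gamma_\fa$ commutes with kernels, so $\ker(\partial_i^{\Gamma_\fa(J)})=\Gamma_\fa(\ker(\partial_i^J))$; and given a nonzero $x\in\Gamma_\fa(J_i)$, essentiality in $J_i$ supplies $r\in R$ with $0\neq rx\in\ker(\partial_i^J)$, and this $rx$ is $\fa$-torsion since $x$ is, so lies in $\Gamma_\fa(\ker(\partial_i^J))$.

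The main obstacle is the ``every such resolution'' half of the vanishing statement, since one must know a priori that minimality forces boundedness above at $\sup(X)$; once one has a bounded-above minimal $J$, the essentiality-plus-zero-homology argument is clean. The remaining claims are direct verifications exploiting that essentiality of a submodule of an injective is preserved by any exact functor which preserves the relevant class of injective indecomposables.
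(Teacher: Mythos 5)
The paper offers no proof of this statement: it is recorded as a Fact, with the burden shifted to the citations \cite{avramov:dgha,chen:sirccr}, so there is no internal argument to compare against. Your proof is essentially the standard one from those sources --- existence via injective envelopes applied to a soft truncation, vanishing above $\sup(X)$ from essentiality plus acyclicity, and preservation of minimality under $S^{-1}(-)$ and $\Gamma_\fa(-)$ by checking that these functors send the relevant injectives to injectives, commute with kernels, and preserve essentiality --- and it is correct in outline.

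Two places deserve tightening. First, in the ``every such resolution'' clause you write ``since we may take $J$ bounded above by general theory,'' but there $J$ is \emph{given}, not chosen; either the boundedness is part of the definition of an injective resolution of $X\in\catd_-(R)$, or you should invoke the uniqueness of minimal semiinjective resolutions up to isomorphism of complexes, so that the boundedness of the resolution you constructed transfers to every minimal one. Once $J$ has a top nonzero degree $t$, your essentiality argument correctly forces $t\leq\sup(X)$. Second, your justification that localization preserves essentiality is garbled: ``shrinking $S$ as needed'' is not an operation, and for an arbitrary $r$ with $0\neq rx\in\ker(\partial^J_i)$ it need not follow that $rx/s\neq 0$ in $S^{-1}J_i$. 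The standard (and genuinely noetherian) argument first replaces $x$ by $tx$ for some $t\in S$ chosen so that $\ann_R(tx)$ is maximal in the set $\{\ann_R(ux)\mid u\in S\}$ (all such $ux$ are nonzero because $x/s\neq0$); then any $r$ with $0\neq rtx\in\ker(\partial^J_i)$ satisfies $urtx\neq 0$ for every $u\in S$, since $urtx=0$ would force $r\in\ann_R(utx)=\ann_R(tx)$ and hence $rtx=0$. The $\Gamma_\fa$ part is fine as written: $\Gamma_\fa(J_i)$ is a submodule of $J_i$, and the intersection of an essential submodule with any submodule is essential in the latter.
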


\section{Support}\label{sec130805b} 

The point of this section is to investigate some useful aspects of 
Foxby's notion of support for complexes from~\cite{foxby:bcfm}. 
One main result is Proposition~\ref{prop140111d}.

\begin{notn}
Set $\VE(\fa):=\{\p\in\spec(R)\mid\fa\subseteq\p\}$.
For each $\p\in\spec(R)$, set $\kappa(\p):=R_{\p}/\p R_{\p}$.
Given a sequence $\x=x_1,\ldots,x_n\in R$ the Koszul complex on $\x$ is denoted $K^R(\x)$.
\end{notn}

\begin{defn}
\label{def120925c}
Let $X\in\catd(R)$.
\begin{enumerate}[(a)]
\item
The ``small,'' or ``homological,'' support of $X$ is 
$$\operatorname{supp}_R(X):=\{ \mathfrak{p} \in \spec(R)\mid \Lotimes{\kappa(\mathfrak{p})}{X} \not\simeq 0\}.$$  
\item
The ``large'' support of $X$ is 
$$\operatorname{Supp}_R(X):=\{\mathfrak{p} \in \spec(R)\mid  X_{\mathfrak{p}} \not\simeq 0\}.$$
\end{enumerate}
\end{defn}

\begin{fact}\label{defn130503a}
If $M$ is an $R$-module, then
\begin{align*}
\operatorname{supp}_R(M)&=\{ \mathfrak{p} \in \operatorname{Spec}(R)\mid  \tor^R_i (R/ \mathfrak{p}, M)_{\mathfrak{p}}\neq 0 \hspace{.05in}  \text{for some} \hspace{.05in} i \}\\
\operatorname{Supp}_R(M) &= \{ \mathfrak{p} \in \operatorname{Spec}(R)\mid  M_{\mathfrak{p}} \neq 0\}.
\end{align*}
\end{fact}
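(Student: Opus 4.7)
The statement consists of two claims, one for each type of support. The plan is to reduce each one to the definition using standard properties of localization and derived tensor products.

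For the large support assertion, I would simply note that for an $R$-module $M$ (viewed as a complex concentrated in degree $0$), the localization $M_\p$ is again a module concentrated in degree $0$. Such a complex is isomorphic to $0$ in $\catd(R)$ if and only if the underlying module is zero, so the complex-theoretic condition $X_\p \not\simeq 0$ coincides with the classical module-theoretic condition $M_\p \neq 0$.

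For the small support assertion, the key point is that $\kappa(\p) = R_\p/\p R_\p = R_\p \otimes_R (R/\p)$, together with the flatness of $R_\p$ over $R$. I would compute
\begin{equation*}
\Lotimes{\kappa(\p)}{M} \simeq \Lotimes[R_\p]{R_\p}{\bigl(\Lotimes{R_\p}{\Lotimes{(R/\p)}{M}}\bigr)} \simeq \bigl(\Lotimes{(R/\p)}{M}\bigr)_\p,
\end{equation*}
using that flatness of $R_\p$ makes $\Lotimes{R_\p}{(-)}$ agree with ordinary localization. Passing to homology yields $\HH_i\bigl(\Lotimes{\kappa(\p)}{M}\bigr) \cong \tor^R_i(R/\p, M)_\p$. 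Hence $\Lotimes{\kappa(\p)}{M} \not\simeq 0$ if and only if $\tor^R_i(R/\p, M)_\p \neq 0$ for some $i$, which is exactly the displayed condition.

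There is no real obstacle here: the only subtlety is being explicit about the interplay between derived tensor product and localization, and both of these reduce to the flatness of $R_\p$. One could alternatively argue by picking a projective resolution $P\xra\simeq M$ over $R$, noting that $P_\p \xra\simeq M_\p$ is a projective resolution over $R_\p$, and computing
\begin{equation*}
\Lotimes{\kappa(\p)}{M} \simeq \Otimes{\kappa(\p)}{P} \simeq \Otimes[R_\p]{\kappa(\p)}{P_\p},
\end{equation*}
whose $i$th homology is $\Tor{i}{\kappa(\p)}{M_\p}$; combined with the fact that tensoring with the flat module $R_\p$ commutes with homology, this recovers $\tor^R_i(R/\p,M)_\p$ as well.
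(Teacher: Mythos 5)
Your argument is correct and is the routine unpacking of the definitions that the paper intends; this statement appears in the paper as a Fact with no proof given. The large-support description is immediate (a module is zero in $\catd(R)$ if and only if it is zero as a module), and the small-support description follows, as you compute, from $\Lotimes{\kappa(\p)}{M}\simeq\bigl(\Lotimes{(R/\p)}{M}\bigr)_\p$ (flatness of $R_\p$) and exactness of localization, which yield $\HH_i\bigl(\Lotimes{\kappa(\p)}{M}\bigr)\cong\Tor{i}{R/\p}{M}_\p$.
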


\begin{fact}\label{fact140109a}
Let $X\in\catd(R)$.
It is straightforward to show that
$$\supp_R(X)\subseteq\Supp_R(X)=\bigcup_{i\in\bbz}\Supp_R(\HH_i(X)).$$
It takes significantly more work to show that
$X \simeq 0$ if and only if $\supp_R(X) = \emptyset$;
see~\cite[5.2, 9.2]{benson:lcstc}.

If $X\in\catd_+^{\text{f}}(R)$, then  Nakayama's Lemma implies that $\supp_R(X)=\Supp_R(X)$.
In particular, if $\x=x_1,\ldots,x_n$ is a generating sequence for $\fa$, then we have
$\supp_R(K^R(\x))=\supp_R(R/\fa)=\VE(\fa)$.
\end{fact}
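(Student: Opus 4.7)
The plan is to verify the four assertions in the displayed fact in order of increasing difficulty.

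For the inclusion $\supp_R(X)\subseteq\Supp_R(X)$, I would argue by contrapositive: if $\p\notin\Supp_R(X)$, then $X_\p\simeq 0$, and since $\kappa(\p)$ is naturally an $R_\p$-module, flatness of the localization $R\to R_\p$ gives $\Lotimes{\kappa(\p)}{X}\simeq\Lotimes[R_\p]{\kappa(\p)}{X_\p}\simeq 0$, so $\p\notin\supp_R(X)$. For the equality $\Supp_R(X)=\bigcup_{i\in\bbz}\Supp_R(\HH_i(X))$, I would use exactness of localization at the level of homology: $\HH_i(X)_\p\cong\HH_i(X_\p)$ for every $i$, so $X_\p\simeq 0$ if and only if $\HH_i(X)_\p=0$ for every $i\in\bbz$. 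The equivalence $X\simeq 0$ if and only if $\supp_R(X)=\emptyset$ is the deep input here, and I would simply cite~\cite[5.2, 9.2]{benson:lcstc} for it, as the statement already indicates.

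For the claim $\supp_R(X)=\Supp_R(X)$ when $X\in\catd_+^{\text{f}}(R)$, the inclusion $\subseteq$ is already in hand. For $\supseteq$, fix $\p\in\Supp_R(X)$; then $X_\p$ is a nonzero object of $\catd_+^{\text{f}}(R_\p)$. The key tool is a standard Nakayama-style principle: any nonzero complex $Y$ in $\catd_+^{\text{f}}(S)$ over a noetherian local ring $(S,\m_S,k)$ admits a minimal free resolution $F'\res Y$ with each $F'_i$ finitely generated free, with $F'_i=0$ for $i<\inf(Y)$, and with all differentials landing in $\m_S F'$. The complex $\Otimes[S]{k}{F'}$ then has identically zero differentials, so its homology is $k\otimes_S F'$ termwise, which is nonzero since some $F'_i\neq 0$. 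Applied to $X_\p$ over $R_\p$, this yields $\Lotimes{\kappa(\p)}{X}\simeq\Lotimes[R_\p]{\kappa(\p)}{X_\p}\not\simeq 0$, so $\p\in\supp_R(X)$.

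Finally, both $K^R(\x)$ and $R/\fa$ lie in $\catd_+^{\text{f}}(R)$, so the previous step identifies their small and large supports. The identity $\Supp_R(R/\fa)=\VE(\fa)$ is standard for a cyclic module. For $K^R(\x)$, I would use that $\x$ acts null-homotopically on $K^R(\x)$, so each $x_j$ annihilates every $\HH_i(K^R(\x))$, giving $\Supp_R(\HH_i(K^R(\x)))\subseteq\VE(\fa)$ for every $i$; the reverse inclusion comes from $\HH_0(K^R(\x))=R/\fa$. The main obstacle is the Nakayama-style step, which rests on existence of minimal free (or semi-free) resolutions for objects of $\catd_+^{\text{f}}(R_\p)$---a classical fact but worth stating carefully to avoid any appearance of circularity with the Benson--Iyengar--Krause result cited above.
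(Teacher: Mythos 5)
Your proof is correct and follows exactly the route the paper indicates: the paper offers no written proof for this Fact beyond the phrases ``straightforward'' and ``Nakayama's Lemma implies,'' and your minimal-free-resolution argument over $R_\p$ is precisely the standard Nakayama step being alluded to. The remaining details (base change $\Lotimes{\kappa(\p)}{X}\simeq\Lotimes[R_\p]{\kappa(\p)}{X_\p}$, exactness of localization, and the fact that $\fa$ annihilates Koszul homology) are all handled correctly, and deferring the equivalence $X\simeq 0\iff\supp_R(X)=\emptyset$ to the cited reference is exactly what the paper does.
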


The next fact is the key to our alternate characterization of small support.

\begin{fact}[\protect{\cite[2.1, 4.1]{foxby:daafuc}}]\label{prop140111a}
Assume that $(R,\m,k)$ is local. Let $\x=x_1,\ldots,x_n$ be a generating sequence for $\m$, and let $X\in\catd(R)$.
Then the following conditions  are equivalent:
\begin{enumerate}[\rm(i)]
\item \label{prop140111a1}
$\m\in\supp_R(X)$, i.e., $\Lotimes kX\not\simeq 0$;
\item \label{prop140111a2}
$\Lotimes{K^{R}(\x)}{X}\not\simeq 0$;
\item \label{prop140111a3}
$\mathbf{L}\Lambda^{\m}(X)\not\simeq 0$;
\item \label{prop140111a5}
$\Rhom{k}{X} \not\simeq 0$;
\item \label{prop140111a4}
$\Rhom{K^{R}(\x)}{X}\not\simeq 0$;
\item \label{prop140111a6}
$\mathbf{R}\Gamma_{\m}(X)\not\simeq 0$.
\end{enumerate}
\end{fact}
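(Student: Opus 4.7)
The plan is to reduce each of the six conditions to the single statement $\m\in\supp_R(X)$. The two workhorses are the small-support tensor product formula
$$\supp_R(\Lotimes{Y}{Z})=\supp_R(Y)\cap\supp_R(Z),$$
which follows from the natural isomorphism $\Lotimes{\kappa(\p)}{(\Lotimes{Y}{Z})}\simeq\Lotimes[\kappa(\p)]{(\Lotimes{\kappa(\p)}{Y})}{(\Lotimes{\kappa(\p)}{Z})}$ together with Fact~\ref{fact140109a}, and the self-duality of the Koszul complex: since $K^R(\x)$ is a bounded complex of finite free modules with $\Rhom{K^R(\x)}{R}\simeq\shift^{-n}K^R(\x)$, tensor-Hom adjunction gives
$$\Rhom{K^R(\x)}{X}\simeq\shift^{-n}\Lotimes{K^R(\x)}{X}.$$

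First I would compute $\supp_R(K^R(\x))=\{\m\}=\supp_R(\RG mR)$. The first equality is immediate from Fact~\ref{fact140109a}, since $\x$ generates $\m$. For the second, the \v{C}ech presentation $\RG mR\simeq\check C(\x)$ of Fact~\ref{fact130619b} localizes at a prime $\p$ to the \v{C}ech complex on the images of $\x$ in $\kappa(\p)$, which is acyclic unless every $x_i$ lies in $\p$, i.e., unless $\p=\m$. Combining with the tensor product formula and Fact~\ref{fact130619b'} yields
$$\supp_R(\Lotimes{K^R(\x)}{X})=\{\m\}\cap\supp_R(X)=\supp_R(\Lotimes{\RG mR}{X})=\supp_R(\RG mX),$$
and each set is nonempty iff $\m\in\supp_R(X)$. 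This proves \eqref{prop140111a1} $\Leftrightarrow$ \eqref{prop140111a2} $\Leftrightarrow$ \eqref{prop140111a6}.

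The equivalence \eqref{prop140111a2} $\Leftrightarrow$ \eqref{prop140111a4} is then immediate from Koszul self-duality. For \eqref{prop140111a5} $\Leftrightarrow$ \eqref{prop140111a4} I would argue by thick subcategories in $\catdfb(R)$: the Koszul homology of $\x$ is annihilated by $\m$, so $K^R(\x)$ has finite-dimensional $k$-vector space homology, and any nonzero bounded complex of this form generates the same thick subcategory as $k$ (since every finite-length $R$-module is an iterated extension of $k$ isolated by soft truncations, and $k=\HH_0(K^R(\x))$). As $\Rhom{-}{X}$ is triangulated and sends zero to zero, its vanishing on $k$ and on $K^R(\x)$ are equivalent.

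Finally, \eqref{prop140111a6} $\Leftrightarrow$ \eqref{prop140111a3} follows from Greenlees--May/MGM duality: the natural morphisms $\RG mX\to\RG m{\LL mX}$ and $\LL mX\to\LL m{\RG mX}$ are isomorphisms, so the two complexes vanish simultaneously. The step I expect to require the most care is the thick-subcategory argument for \eqref{prop140111a5} $\Leftrightarrow$ \eqref{prop140111a4}; a bypass, should that become awkward, is to develop a cosupport-side tensor product formula for $\Rhom$ (which the later sections of the paper provide) and recycle the support computation above to handle \eqref{prop140111a5} and \eqref{prop140111a3} directly.
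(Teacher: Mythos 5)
The paper states this as a Fact cited to Foxby and Iyengar and gives no proof of its own, so there is no internal argument to compare against; I assess your proof on its merits. The reductions giving (i) $\Leftrightarrow$ (ii) $\Leftrightarrow$ (v) $\Leftrightarrow$ (vi) — via $\supp_R(K^R(\x))=\{\m\}=\supp_R(\RG mR)$, Proposition~\ref{fact130611a}, the detection principle in Fact~\ref{fact140109a}, and Koszul self-duality — and (iii) $\Leftrightarrow$ (vi) by MGM duality from~\cite{lipman:lhcs} are all correct.

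The thick-subcategory step for (iv), however, has a genuine gap: $k$ and $K^R(\x)$ do \emph{not} generate the same thick subcategory unless $R$ is regular. Since $K^R(\x)$ is a bounded complex of finite free modules, $\operatorname{thick}(K^R(\x))$ is contained in the perfect complexes, so $k\in\operatorname{thick}(K^R(\x))$ would force $\pd_R(k)<\infty$. The containment you do have, $K^R(\x)\in\operatorname{thick}(k)$, only yields (v) $\Rightarrow$ (iv); the missing direction (iv) $\Rightarrow$ (v) is, after unwinding, precisely the implication $\m\in\cosupp_R(X)\Rightarrow\m\in\supp_R(X)$, which the paper records in Fact~\ref{fact140109ay} as a \emph{consequence} of the very Fact being proved, so it is not a formality. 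Your proposed bypass is the right repair: Proposition~\ref{cor130602a} gives $\cosupp_R(\Rhom kX)=\{\m\}\cap\cosupp_R(X)$, Proposition~\ref{lem130611ax} gives $\cosupp_R(\LL mX)=\{\m\}\cap\cosupp_R(X)$, and then the cosupport detection principle of Proposition~\ref{prop140111e}\eqref{prop140111e1} yields (iii) $\Leftrightarrow$ (iv) $\Leftrightarrow$ $\bigl(\m\in\cosupp_R(X)\bigr)$, which joins the cycle through (iii) $\Leftrightarrow$ (vi). Do keep in mind that this detection principle is a deep input cited from~\cite{benson:csc}, of the same weight as the support detection in Fact~\ref{fact140109a} that you already use; the paper accepts both as black boxes, which is what keeps the bypass from being circular.
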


Part of the following result is in~\cite[(9.2)]{benson:lcstc}; see, however, \cite[Remark 2.3]{chen:sirccr} for some words of caution.

\begin{prop}\label{prop140111d}
Let $X\in\catd(R)$ and  $\p\in\spec(R)$, and let  $\x=x_1,\ldots,x_n$ be a generating sequence for $\p$.
Then the following conditions are equivalent:
\begin{enumerate}[\rm(i)]
\item \label{prop140111d1}
$\p\in\supp_R(X)$, i.e., $\Lotimes {\kappa(\p)}X\not\simeq 0$;
\item \label{prop140111d2}
$\Lotimes{K^{R_\p}(\x)}{X}\not\simeq 0$;
\item \label{prop140111d3}
$\mathbf{L}\Lambda^{\p R_{\p}}(X_{\p})\not\simeq 0$, that is, $\mathbf{L}\Lambda^{\p}(X_{\p})\not\simeq 0$;
\item \label{prop140111d5}
$\Rhom[R_{\p}]{\kappa(\p)}{X_{\p}} \not\simeq 0$;
\item \label{prop140111d4}
$\Rhom[R_{\p}]{K^{R_\p}(\x)}{X_{\p}} \not\simeq 0$;
\item \label{prop140111d6}
$\mathbf{R}\Gamma_{\p R_{\p}}(X_{\p})\not\simeq 0$, that is, $\mathbf{R}\Gamma_{\p}(X_{\p})\not\simeq 0$;
\item \label{prop140111d7}
$\p R_\p\in\supp_{R_\p}(X_\p)$.
\end{enumerate}
\end{prop}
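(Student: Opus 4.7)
My plan is to reduce all seven conditions to the local case over $R_\p$ and then invoke Fact~\ref{prop140111a} applied to the local ring $(R_\p,\p R_\p,\kappa(\p))$ with generating sequence $\x/1$ for $\p R_\p$ and complex $X_\p\in\catd(R_\p)$.

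First I will observe that conditions~\eqref{prop140111d3}--\eqref{prop140111d7} are already formulated intrinsically over the local ring $R_\p$ in terms of $X_\p$. Applying Fact~\ref{prop140111a} to $X_\p$ therefore immediately gives the equivalence of \eqref{prop140111d3}, \eqref{prop140111d4}, \eqref{prop140111d5}, \eqref{prop140111d6}, and~\eqref{prop140111d7}, together with the auxiliary equivalent statement
$$\Lotimes[R_\p]{K^{R_\p}(\x)}{X_\p}\not\simeq 0.$$

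To connect~\eqref{prop140111d1} and~\eqref{prop140111d2} to this list, my next step is to establish the base-change isomorphism $\Lotimes{M}{X}\simeq\Lotimes[R_\p]{M}{X_\p}$ for every $R_\p$-complex $M$. This will follow by picking a semiflat resolution $F\xra{\simeq}X$ over $R$, observing that $F_\p\xra{\simeq} X_\p$ is a semiflat resolution over $R_\p$ (since localization preserves semiflatness), and noting that $M\otimes_R F\cong M\otimes_{R_\p}F_\p$ because $M$ is an $R_\p$-module. Specializing $M=\kappa(\p)$ then shows~\eqref{prop140111d1} is equivalent to~\eqref{prop140111d7}, while specializing $M=K^{R_\p}(\x)$ shows~\eqref{prop140111d2} is equivalent to the auxiliary Koszul statement above. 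The base-change step is essentially bookkeeping and poses no serious obstacle; all of the real work is done by Fact~\ref{prop140111a} in the local setting.
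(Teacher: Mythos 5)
Your proposal is correct and follows essentially the same route as the paper: both proofs reduce to the local situation by applying Fact~\ref{prop140111a} to the $R_\p$-complex $X_\p$, and both handle conditions~\eqref{prop140111d1} and~\eqref{prop140111d2} by base-changing the derived tensor product along $R\to R_\p$. The one point you glide over is the pair of parenthetical identifications in~\eqref{prop140111d3} and~\eqref{prop140111d6}, namely $\mathbf{L}\Lambda^{\p R_\p}(X_\p)\simeq\mathbf{L}\Lambda^\p(X_\p)$ and $\mathbf{R}\Gamma_{\p R_\p}(X_\p)\simeq\mathbf{R}\Gamma_\p(X_\p)$: the left-hand functors live over $R_\p$ while the right-hand ones are taken over $R$, so they are \emph{not} intrinsically formulated over $R_\p$ as written and do require a separate comparison. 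The paper settles this by citing~\cite[Theorem~6.5]{yekutieli:hct}. This is a small omission rather than a flaw in the logical skeleton, but you should add that justification (or restrict your claim to the $R_\p$-versions of~\eqref{prop140111d3} and~\eqref{prop140111d6}) to make the argument complete.
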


\begin{proof}
By applying Fact~\ref{prop140111a} to the $R_{\p}$-complex $X_{\p}$, we see that the following complexes are simultaneously isomorphic to $0$ in $\catd(R)$.
\begin{gather*}
\Lotimes{\kappa(\mathfrak{p})}{X}
\simeq
\Lotimes{(\Lotimes[R_{\p}]{\kappa(\mathfrak{p})}{R_{\p}})}{X}
\simeq
\Lotimes[R_{\p}]{\kappa(\mathfrak{p})}{X_\p}
\\
\Lotimes{K^{R_\p}(\x)}{X}
\simeq
\Lotimes{(\Lotimes[R_{\p}]{K^{R_\p}(\x)}{R_{\p}})}{X}
\simeq
\Lotimes[R_{\p}]{K^{R_\p}(\x)}{X_\p}
\\
\ \ \
\mathbf{L}\Lambda^{\p R_{\p}}(X_{\p})\simeq\mathbf{L}\Lambda^{\p}(X_{\p})
\qquad\qquad\quad
\mathbf{R}\Gamma_{\p R_{\p}}(X_{\p})\simeq \mathbf{R}\Gamma_{\p}(X_{\p})
\\
\Rhom[R_{\p}]{K^{R_\p}(\x)}{X_{\p}}
\qquad\qquad\qquad
\Rhom[R_{\p}]{\kappa(\p)}{X_{\p}} 
\end{gather*}
(The isomorphisms in the third line are from~\cite[Theorem 6.5]{yekutieli:hct}.)
This gives the equivalence of conditions~\eqref{prop140111d1}--\eqref{prop140111d6}.
The equivalence of conditions~\eqref{prop140111d6} and~\eqref{prop140111d7} follows from a comparison
with conditions~\eqref{prop140111a6} and~\eqref{prop140111a1} of Fact~\ref{prop140111a}.
\end{proof}

\begin{disc}\label{rmk140708a}
The equivalence of conditions~\eqref{prop140111d1} and~\eqref{prop140111d6} 
in Proposition~\ref{prop140111d} show that our definition
of $\supp_R(X)$ is equivalent to that from~\cite{benson:lcstc}; see~\cite[Theorem 9.1]{benson:lcstc}.
\end{disc}

See~\cite[Remark 2.3]{chen:sirccr} for a discussion of what goes wrong in the next result when $X\notin\catd_-(R)$.

\begin{prop}\label{lem130622a}
If $X\in\catd_-(R)$ with minimal injective resolution $X\xra\simeq J$,
then
$$\supp_R(X)=\bigcup_{i\in\bbz}\{\p\in\spec(R)\mid\text{$E_R(R/\p)$ is a summand of $J_i$}\}.$$
\end{prop}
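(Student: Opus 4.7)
The plan is to combine the characterization of small support in Proposition~\ref{prop140111d} with the standard structure theorem for a minimal injective resolution, working pointwise in $\spec(R)$.

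Fix $\p\in\spec(R)$. By Proposition~\ref{prop140111d}, $\p\in\supp_R(X)$ if and only if $\Rhom[R_\p]{\kappa(\p)}{X_\p}\not\simeq 0$. By Fact~\ref{fact130622b}, the localized quasiisomorphism $X_\p\xra\simeq J_\p$ is itself a minimal injective resolution of $X_\p$ over $R_\p$; in particular, $J_\p$ is a bounded-above complex of injective $R_\p$-modules, hence semiinjective, so
$$\Rhom[R_\p]{\kappa(\p)}{X_\p}\simeq\Hom[R_\p]{\kappa(\p)}{J_\p}.$$
Thus the question reduces to detecting when this last complex has nonzero homology.

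Next, apply the classical decomposition of each injective term as a direct sum of indecomposables,  $J_i\cong\bigoplus_{\q\in\spec(R)}E_R(R/\q)^{(\bass{i}{\q,X})}$, where $\bass{i}{\q,X}$ is the $i$-th Bass number. Localizing at $\p$ annihilates the summands indexed by $\q\not\subseteq\p$, while for $\q\subseteq\p$ one has $E_R(R/\q)_\p\cong E_{R_\p}(R_\p/\q R_\p)$. Since
$$\Hom[R_\p]{\kappa(\p)}{E_{R_\p}(R_\p/\q R_\p)}\cong\begin{cases}\kappa(\p)&\text{if }\q=\p,\\ 0&\text{if }\q\subsetneq\p,\end{cases}$$
each term $\Hom[R_\p]{\kappa(\p)}{J_{i,\p}}$ is a $\kappa(\p)$-vector space whose dimension is exactly $\bass{i}{\p,X}$, i.e., the multiplicity of $E_R(R/\p)$ in $J_i$.

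The final point is that the differentials of $\Hom[R_\p]{\kappa(\p)}{J_\p}$ all vanish, so this complex has nonzero homology in degree $-i$ precisely when $\bass{i}{\p,X}>0$. For this, note that any nonzero map $f\colon\kappa(\p)\to J_{i,\p}$ has simple image; because $J_\p$ is minimal, $\ker\partial^{J_\p}_i$ is essential in $J_{i,\p}$, so it meets $\im f$ nontrivially, and simplicity forces $\im f\subseteq\ker\partial^{J_\p}_i$, i.e., $\partial^{J_\p}_i\circ f=0$. Combining the three paragraphs, $\p\in\supp_R(X)$ if and only if $E_R(R/\p)$ appears as a direct summand of some $J_i$, which is the claimed equality.

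The main obstacle is not any single deep step but the careful bookkeeping: one has to know (and cite) that minimality is preserved under localization (Fact~\ref{fact130622b}), that localization of the indecomposable decomposition behaves correctly, and that minimality forces the relevant differentials to vanish. Given Proposition~\ref{prop140111d}, the argument is then essentially a direct computation.
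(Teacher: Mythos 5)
Your argument is correct, and it takes a genuinely more self-contained route than the paper. The paper's proof is essentially a citation: it appeals to \cite[2.1]{chen:sirccr} after observing (via Fact~\ref{fact130622b}) that $J_\p$ is a minimal, homotopically injective complex of injectives. You instead re-derive that content in the present special case: after invoking Proposition~\ref{prop140111d} to trade the $\lotimes$ definition of $\supp_R(X)$ for the $\rhom$ characterization at $\p$, and Fact~\ref{fact130622b} to localize while preserving minimality, you compute $\Rhom[R_\p]{\kappa(\p)}{X_\p}$ via the bounded-above (hence semiinjective) complex $J_\p$, decompose each $J_{i,\p}$ by the Matlis structure theorem, observe that only the $E_{R_\p}(\kappa(\p))$ summands contribute to $\Hom[R_\p]{\kappa(\p)}{J_{i,\p}}$, and use essentiality of $\ker\partial^{J_\p}_i$ to conclude the induced differentials vanish, so the homology in each degree is a $\kappa(\p)$-space whose dimension is the multiplicity of $E_R(R/\p)$ in $J_i$. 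What your proof buys is independence from the external reference (everything reduces to earlier results in this paper plus standard facts about injective modules); what the paper's route buys is brevity and uniformity with the Chen--Iyengar framework, whose Remark 2.3 (cited just before the statement) also explains why the conclusion fails outside $\catd_-(R)$. Both arguments use $X\in\catd_-(R)$ at exactly the same place, namely to ensure $J_\p$ is semiinjective.
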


\begin{proof}
By~\cite[2.1]{chen:sirccr}, it suffices to observe that, for each $\p\in\spec(R)$, the complex $J_\p$ is minimal
by Fact~\ref{fact130622b}, and  $J_\p$ is ``homotopically injective'' since it is a bounded above complex of injectives.
\end{proof}

\begin{cor}\label{prop130619a}
Let $X\in\catd_-(R)$ such that $\supp_R(X)\subseteq V(\fa)$. 
\begin{enumerate}[\rm(a)]
\item \label{prop130619a1}
The minimal injective resolution of $X$
consists of $\fa$-torsion modules. 
\item \label{prop130619a4}
Each injective resolution of $X$
consisting of $\fa$-torsion modules is an $\Comp Ra$-complex.
\end{enumerate}
\end{cor}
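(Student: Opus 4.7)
The plan is to combine Proposition~\ref{lem130622a} with two classical facts about $\fa$-torsion modules over a noetherian ring.

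For part~\eqref{prop130619a1}, I would apply Proposition~\ref{lem130622a} to the minimal injective resolution $X\res J$, which presents each $J_i$ as a direct sum of modules $E_R(R/\p)$ with $\p\in\supp_R(X)\subseteq\VE(\fa)$. Since $\fa\subseteq\p$ for every such $\p$, one has the containment $\Gamma_{\p}\subseteq\Gamma_{\fa}$ of torsion functors, so it suffices to invoke the standard fact that $E_R(R/\p)$ is $\p$-torsion over a noetherian ring. A direct sum of $\fa$-torsion modules is $\fa$-torsion, which gives the claim.

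For part~\eqref{prop130619a4}, the key observation is that any $\fa$-torsion $R$-module $M$ carries a canonical $\Comp R{a}$-module structure extending its $R$-action. Writing $M=\bigcup_{n\geq 1}(0:_M\fa^n)$, each submodule $(0:_M\fa^n)$ is an $R/\fa^n$-module, hence an $\Comp R{a}$-module via the projection $\Comp R{a}\onto R/\fa^n$, and these actions cohere as $n$ varies to yield a canonical $\Comp R{a}$-action on $M$. The same approximation argument shows that any $R$-linear map between $\fa$-torsion modules is automatically $\Comp R{a}$-linear. Applied to the given injective resolution $J'$, this makes each $J'_i$ an $\Comp R{a}$-module and each differential $\partial^{J'}_i$ an $\Comp R{a}$-homomorphism, so $J'$ is naturally an $\Comp R{a}$-complex.

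Neither part presents a serious obstacle; the only items worth flagging are the (standard) $\p$-torsion property of $E_R(R/\p)$ used in~\eqref{prop130619a1} and the observation, used in~\eqref{prop130619a4}, that an $R$-linear map between $\fa$-torsion modules is automatically $\Comp R{a}$-linear. Note also that~\eqref{prop130619a4} does \emph{not} assert that the $J'_i$ remain injective as $\Comp R{a}$-modules, only that $J'$ lies in the image of the forgetful functor $\catd(\Comp R{a})\to\catd(R)$, which is exactly what the above argument provides.
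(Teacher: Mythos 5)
Your proof is correct and takes essentially the same route as the paper: part~\eqref{prop130619a1} is the same application of Proposition~\ref{lem130622a} plus the observation that $E_R(R/\p)$ is $\fa$-torsion whenever $\fa\subseteq\p$. For part~\eqref{prop130619a4}, where the paper simply cites \cite[Fact 2.1(a) and Lemma 2.2(a)]{kubik:hamm2} for the $\Comp Ra$-module structure on $\fa$-torsion modules and the automatic $\Comp Ra$-linearity of $R$-linear maps between them, you instead unfold the standard proof of those facts via the filtration $M=\bigcup_n(0:_M\fa^n)$; this is a correct and self-contained substitute for the citation.
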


\begin{proof}
Let $X\xra\simeq J$ be a minimal injective resolution. 

\eqref{prop130619a1} 
By Proposition~\ref{lem130622a}, for each $i$ we have
$J_i\cong\oplus_{\p\in\supp_{R}(X)}E_R(R/\p)^{(\mu^i_\p)}$ for some sets $\mu^i_\p$.
Since each $\p\in\supp_R(X)$ is in $V(\fa)$, it follows that each summand $E_R(R/\p)^{(\mu^i_\p)}$
is $\fa$-torsion, so each $J_i$ is $\fa$-torsion as well.

\eqref{prop130619a4} 
Each module $J_i$ is $\fa$-torsion, so it is an $\Comp Ra$-module by~\cite[Fact 2.1(a)]{kubik:hamm2},
and each differential $\partial^J_i$ is $\Comp Ra$-linear by~\cite[Lemma 2.2(a)]{kubik:hamm2}.
\end{proof}

The next few results document some basic properties of small support. 
Several of these are known, see~\cite[Proposition 5.1 and Theorem 5.6]{benson:lcstc}, however our proofs are more direct. 

\begin{prop}\label{prop140710g}
Given a distinguished triangle $X\to Y\to Z\to$ in $\catd(R)$
one~has% 
$$\supp_R(Y)\subseteq\supp_R(X)\bigcup\supp_R(Z).$$
\end{prop}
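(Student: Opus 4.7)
The plan is to prove the contrapositive: fix $\p\in\spec(R)$ with $\p\notin\supp_R(X)\cup\supp_R(Z)$ and show $\p\notin\supp_R(Y)$. By Definition~\ref{def120925c}, the hypothesis says $\Lotimes{\kappa(\p)}{X}\simeq 0$ and $\Lotimes{\kappa(\p)}{Z}\simeq 0$ in $\catd(R)$.

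Next, I would apply the triangulated functor $\Lotimes{\kappa(\p)}{-}$ (equivalently, tensor with a semiflat resolution of $\kappa(\p)$) to the given distinguished triangle $X\to Y\to Z\to$ to obtain a distinguished triangle
$$\Lotimes{\kappa(\p)}{X}\to\Lotimes{\kappa(\p)}{Y}\to\Lotimes{\kappa(\p)}{Z}\to$$
in $\catd(R)$. Since the outer two terms are isomorphic to $0$, the standard fact that in a distinguished triangle any two vanishing vertices force the third to vanish (which is immediate from rotating the triangle and using the associated long exact sequence in homology) yields $\Lotimes{\kappa(\p)}{Y}\simeq 0$, hence $\p\notin\supp_R(Y)$.

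There is no real obstacle here; the argument is essentially formal, relying only on the triangulated structure of $\catd(R)$, the exactness of the derived tensor functor, and the definition of $\supp_R(-)$ via nonvanishing of $\Lotimes{\kappa(\p)}{-}$. The same template works verbatim to establish the analogous inclusions $\supp_R(X)\subseteq\supp_R(Y)\cup\supp_R(\shift^{-1}Z)$ and $\supp_R(Z)\subseteq\supp_R(Y)\cup\supp_R(\shift X)$ by rotating the triangle, though only the stated inclusion is needed here.
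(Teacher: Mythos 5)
Your proof is correct and takes essentially the same approach as the paper: apply $\Lotimes{\kappa(\p)}{-}$ to the distinguished triangle and conclude that the middle vertex vanishes when the outer two do. The extra remarks about rotating the triangle are accurate but not needed for the stated inclusion.
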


\begin{proof}
Let $\p\in\spec(R)$.
From the induced triangle
$$\Lotimes{\kappa(\p)}{X}\to\Lotimes{\kappa(\p)}{Y}\to\Lotimes{\kappa(\p)}{Z}\to$$
we conclude that, if $\p\notin\supp_R(X)\bigcup\supp_R(Z)$, then
$\p\notin\supp_R(Y)$, as desired.
\end{proof}

\begin{prop}\label{prop140710a}
Given a set $\{X(i)\}_{i\in\Lambda}\subseteq\catd(R)$
one has 
$$\supp_R\left(\coprod_iX(i)\right)=\bigcup_i\supp_R(X(i))\subseteq\supp_R\left(\prod_iX(i)\right).$$
\end{prop}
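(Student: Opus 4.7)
The plan is to verify both assertions via the characterization $\p\in\supp_R(X)\iff \Lotimes{\kappa(\p)}{X}\not\simeq 0$, combined with the facts that derived tensor products commute with coproducts and that canonical projections out of products split in $\catd(R)$.

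First I would prove the equality. Fix $\p\in\spec(R)$ and choose semi-flat resolutions $F(i)\res X(i)$. A coproduct of semi-flat complexes is again semi-flat (the defining property of preserving injective quasi-isomorphisms is closed under coproducts, since coproducts of injections are injections and coproducts preserve quasi-isomorphisms), so $\coprod_iF(i)$ is a semi-flat resolution of $\coprod_iX(i)$. Hence
$$\Lotimes{\kappa(\p)}{\coprod_iX(i)}\simeq\Otimes{\kappa(\p)}{\coprod_iF(i)}\cong\coprod_i\Otimes{\kappa(\p)}{F(i)}\simeq\coprod_i\Lotimes{\kappa(\p)}{X(i)}.$$
Since homology commutes with coproducts and a direct sum of modules vanishes iff every summand does, the left-hand complex is acyclic iff each $\Lotimes{\kappa(\p)}{X(i)}$ is acyclic, giving the equality $\supp_R(\coprod_iX(i))=\bigcup_i\supp_R(X(i))$.

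For the inclusion, fix $j\in\Lambda$ and observe that the canonical projection $\prod_iX(i)\to X(j)$ in $\catd(R)$ admits a section. Indeed, taking semi-injective resolutions $X(i)\res I(i)$, the complex $\prod_iI(i)$ is semi-injective and represents the product in $\catd(R)$; the chain map $I(j)\to\prod_iI(i)$ whose $j$-th component is the identity and whose other components are zero is a chain-level section of the projection, which descends to $\catd(R)$. Applying the triangulated functor $\Lotimes{\kappa(\p)}{-}$ preserves split epimorphisms, yielding a split epimorphism
$$\Lotimes{\kappa(\p)}{\prod_iX(i)}\onto\Lotimes{\kappa(\p)}{X(j)}$$
in $\catd(R)$. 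Thus whenever the target is non-zero, so is the source, so $\p\in\supp_R(X(j))$ implies $\p\in\supp_R(\prod_iX(i))$. Taking the union over $j$ gives the inclusion $\bigcup_i\supp_R(X(i))\subseteq\supp_R(\prod_iX(i))$.

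The main technical point is the bookkeeping required to justify that derived tensor products commute with arbitrary coproducts and that canonical projections out of products in $\catd(R)$ admit sections; both reductions rely on the existence of well-behaved semi-flat and semi-injective resolutions recalled in Section~\ref{sec140109b}, and once these are in hand the conclusion follows from elementary manipulations. I expect no substantive obstacle beyond this.
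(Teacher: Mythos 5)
Your proof is correct and follows essentially the same route as the paper. The paper establishes the equality via the isomorphism $\kappa(\p)\lotimes_R(\coprod_iX(i))\simeq\coprod_i(\kappa(\p)\lotimes_RX(i))$, and for the containment it splits off one factor, writing $\kappa(\p)\lotimes_R(\prod_iX(i))\simeq(\kappa(\p)\lotimes_RX(j))\coprod(\kappa(\p)\lotimes_R\prod_{i\neq j}X(i))$; your split-epimorphism argument is just a rephrasing of the fact that $X(j)$ is a direct summand of $\prod_iX(i)$ in $\catd(R)$, so the two approaches coincide.
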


\begin{proof}
Given a prime $\p\in\spec(R)$ and en element $j\in\Lambda$, we have isomorphisms
\begin{align*}
\kappa(\p)\lotimes_R\left(\coprod_iX(i)\right)
&\simeq\coprod_i\left(\kappa(\p)\lotimes_RX(i)\right) \\
\kappa(\p)\lotimes_R\left(\prod_iX(i)\right)
&\simeq\left(\kappa(\p)\lotimes_RX(j)\right) \coprod\left(\kappa(\p)\lotimes_R\left(\prod_{i\neq j}X(i)\right)\right).
\end{align*}
The conclusion
$\supp_R\left(\coprod_iX(i)\right)=\bigcup_i\supp_R(X(i))\subseteq\supp_R\left(\prod_iX(i)\right)$
follows readily by definition.
\end{proof}

When $X,Y\in\catd_+(R)$, the next result is from~\cite[Theorem 7.1(c)]{foxby:cim}.

\begin{prop}\label{fact130611a}
If $X,Y\in\catd(R)$, then 
$$\supp_{R}(\Lotimes{X}{Y}) = \supp_R(X) \bigcap \supp_{R}(Y).$$
\end{prop}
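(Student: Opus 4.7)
The plan is to fix a prime $\p \in \spec(R)$ and show that $\p \in \supp_R(\Lotimes XY)$ if and only if $\p \in \supp_R(X)$ and $\p \in \supp_R(Y)$. By definition, $\p \in \supp_R(\Lotimes XY)$ exactly when $\Lotimes{\kappa(\p)}{(\Lotimes XY)} \not\simeq 0$, so the first move is to apply associativity of the derived tensor product to rewrite this as $\Lotimes{(\Lotimes{\kappa(\p)}{X})}{Y}$.

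Next, I would exploit the fact that $U := \Lotimes{\kappa(\p)}{X}$ is an object of $\catd(\kappa(\p))$, and over a field every complex is formal: since every short exact sequence of $\kappa(\p)$-vector spaces splits, $U$ is isomorphic in $\catd(\kappa(\p))$ (hence in $\catd(R)$) to the direct sum of shifts of its homology, i.e.\ $U \simeq \coprod_{i \in \bbz} \shift^i \kappa(\p)^{(B_i)}$ for certain (possibly empty) index sets $B_i$. Observe that $U \not\simeq 0$ precisely when some $B_i$ is nonempty, which happens precisely when $\p \in \supp_R(X)$.

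Using this decomposition, I would compute
\[
\Lotimes{\kappa(\p)}{(\Lotimes XY)} \simeq \Lotimes UY \simeq \coprod_{i \in \bbz} \shift^i\bigl(\Lotimes{\kappa(\p)}{Y}\bigr)^{(B_i)},
\]
where the final isomorphism uses that derived tensor product commutes with coproducts and shifts in each variable. By Proposition~\ref{prop140710a}, the support of this coproduct is the union of the supports of its summands. Each summand $\shift^i(\Lotimes{\kappa(\p)}{Y})^{(B_i)}$ with $B_i \neq \emptyset$ is nonzero exactly when $\Lotimes{\kappa(\p)}{Y} \not\simeq 0$, i.e.\ when $\p \in \supp_R(Y)$. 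Combining these two conditions yields $\p \in \supp_R(\Lotimes XY)$ iff $\p \in \supp_R(X) \cap \supp_R(Y)$, as required.

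The main subtlety I anticipate is the formality statement for $U$ in $\catd(\kappa(\p))$ when $X$ (and hence $U$) is unbounded; however, since $\kappa(\p)$ is a field, every $\kappa(\p)$-complex is both semiflat and semiinjective, and the standard cycle/boundary splitting argument produces a quasiisomorphism $U \res \coprod_i \shift^i \HH_i(U)$ even in the unbounded case, so this step should go through without difficulty.
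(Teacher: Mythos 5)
Your proof is correct, and it takes a genuinely different technical route from the paper's. The paper first base-changes both factors to $\kappa(\p)$ via $\Lotimes{\kappa(\p)}{(\Lotimes{X}{Y})}\simeq\Lotimes[\kappa(\p)]{(\Lotimes{\kappa(\p)}X)}{(\Lotimes{\kappa(\p)}Y)}$ and then invokes the K\"unneth formula over the field $\kappa(\p)$, concluding symmetrically that the homology vanishes if and only if one of the two factors is acyclic. You instead keep the tensor over $R$, decompose only one factor $U=\Lotimes{\kappa(\p)}X$ using formality over a field into $\coprod_i\shift^i\kappa(\p)^{(B_i)}$, and then compute $\Lotimes UY$ directly as a coproduct of shifts of $\Lotimes{\kappa(\p)}Y$. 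Both arguments ultimately rest on the same underlying fact---that complexes of $\kappa(\p)$-vector spaces split---but yours makes this explicit (you name the formality) whereas the paper absorbs it into the black-box K\"unneth citation. Your version is slightly asymmetric in $X$ and $Y$, and your invocation of Proposition~\ref{prop140710a} followed by nonemptiness of supports is a mild overkill: since homology commutes with coproducts, the coproduct $\coprod_i\shift^i(\Lotimes{\kappa(\p)}Y)^{(B_i)}$ is nonzero in $\catd(R)$ iff some summand is, which avoids any appeal to the nontrivial fact that $\supp_R(Z)=\emptyset$ implies $Z\simeq 0$. Either way, the argument is sound.
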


\begin{proof}
The isomorphism
\begin{align*}
\Lotimes{\kappa(\p)}{(\Lotimes{X}{Y})}
&\simeq\Lotimes[\kappa(\p)]{(\Lotimes{\kappa(\p)}X)}{(\Lotimes{\kappa(\p)}Y)}
\end{align*}
conspires with the K\"unneth formula to imply that
\begin{align*}
\HH_i(\Lotimes{\kappa(\p)}{(\Lotimes{X}{Y})})
&\cong\bigoplus_{p+q=i}\Lotimes[\kappa(\p)]{\HH_p(\Lotimes{\kappa(\p)}X)}{\HH_q(\Lotimes{\kappa(\p)}Y)}.
\end{align*}
From this, it follows that $\Lotimes{\kappa(\p)}{(\Lotimes{X}{Y})}\not\simeq 0$
if and only if $\Lotimes{\kappa(\p)}X\not\simeq 0$ and $\Lotimes{\kappa(\p)}Y\not\simeq 0$,
as desired.
\end{proof}

\begin{prop}\label{lem130611a}
If $X\in\catd(R)$, then
$\supp_{R}(\RG{a}{X}) = \supp_{R}(X) \bigcap \VE(\fa)$.
In particular, we have $\supp_{R}(\RG{a}{R}) = \VE(\fa)$.
\end{prop}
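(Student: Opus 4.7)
My plan is to reduce everything to a computation of $\supp_R(\RG aR)$ via the tensor-product formula for derived torsion, and then compute $\supp_R(\RG aR) = V(\fa)$ by hand using the \v{C}ech model.

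First, I would invoke the natural isomorphism $\RG aX \simeq \Lotimes{\RG aR}{X}$ from Fact~\ref{fact130619b'}, and then apply Proposition~\ref{fact130611a} to obtain
\[
\supp_R(\RG aX) \;=\; \supp_R(\RG aR) \cap \supp_R(X).
\]
Granting the identity $\supp_R(\RG aR) = V(\fa)$, this already yields the displayed formula. Moreover, the ``in particular'' statement is the special case $X = R$, since the computation $\Lotimes{\kappa(\p)}{R} \simeq \kappa(\p) \neq 0$ trivially gives $\supp_R(R) = \spec(R)$.

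It then remains to establish the identity $\supp_R(\RG aR) = V(\fa)$. For the inclusion $V(\fa) \subseteq \supp_R(\RG aR)$, fix $\p \in V(\fa)$ and apply Fact~\ref{fact130619b'} to the module $\kappa(\p)$ to see that $\Lotimes{\kappa(\p)}{\RG aR} \simeq \RG a\kappa(\p)$. Since $\fa \subseteq \p$, the field $\kappa(\p)$ is annihilated by $\fa$, so $\Gamma_\fa(\kappa(\p)) = \kappa(\p)$; taking $\HH_0$ gives $\HH_0(\RG a\kappa(\p)) \cong \kappa(\p) \neq 0$, hence $\p \in \supp_R(\RG aR)$. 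For the reverse inclusion, suppose $\p \notin V(\fa)$ and fix a generating sequence $\x = x_1,\dots,x_n$ of $\fa$ with (after reindexing) some $x_i \notin \p$. By Fact~\ref{fact130619b}, $\RG aR \simeq \check C(\x)$, and localization yields $(\RG aR)_\p \simeq \check C(\x/1)$ over $R_\p$. Because $x_i/1$ is a unit in $R_\p$, the \v{C}ech complex $\check C(\x/1)$ is contractible, so $(\RG aR)_\p \simeq 0$; by Fact~\ref{fact140109a} this forces $\p \notin \Supp_R(\RG aR) \supseteq \supp_R(\RG aR)$.

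The main subtlety lies in the second inclusion: one must be careful that $\fa$ acting ``invertibly somewhere'' on $\kappa(\p)$ kills the entire derived functor $\RG a$, not merely $\Gamma_\fa$ in degree zero. The cleanest way to see this is precisely the \v{C}ech argument above, which makes the vanishing visible at the level of representatives rather than at the level of individual cohomology modules. Once this step is in place, the remaining deductions are formal consequences of Fact~\ref{fact130619b'} and Proposition~\ref{fact130611a}.
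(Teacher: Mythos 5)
Your proof is correct, and it takes a genuinely different route from the paper for the key step $\supp_R(\RG aR)=\VE(\fa)$. The paper's proof is entirely injective-resolution-theoretic: it takes a minimal injective resolution $R\res J$, notes that every $E_R(R/\p)$ occurs as a summand of some $J_i$, applies $\Gamma_\fa(-)$ (which kills exactly the $E_R(R/\p)$ with $\p\notin\VE(\fa)$), and then invokes Fact~\ref{fact130622b} together with Proposition~\ref{lem130622a} to read off the support from the indecomposable injectives appearing in the minimal resolution $\Gamma_\fa(J)$ of $\RG aR$. Your argument instead splits into two direct inclusions: for $\VE(\fa)\subseteq\supp_R(\RG aR)$ you observe that $\kappa(\p)$ is $\fa$-torsion when $\fa\subseteq\p$, so $\HH_0(\RG a\kappa(\p))=\Gamma_\fa(\kappa(\p))=\kappa(\p)\neq 0$; for the reverse inclusion you use the \v{C}ech model of $\RG aR$ from Fact~\ref{fact130619b} and the observation that the \v{C}ech complex localized at a prime $\p\not\supseteq\fa$ is contractible because some generator of $\fa$ becomes a unit. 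Both arguments are clean; yours avoids the machinery of Proposition~\ref{lem130622a} and the structure theory of minimal injective resolutions, at the mild cost of proving two inclusions by somewhat unrelated means, whereas the paper's proof gets both inclusions at once from the injective structure. After this, both you and the paper finish the same way, combining $\RG a-\simeq\Lotimes{\RG aR}{-}$ (Fact~\ref{fact130619b'}) with the tensor support formula of Proposition~\ref{fact130611a}.
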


\begin{proof}
Let $R\xra\simeq I$ be a minimal injective resolution.
It is well known that each injective hull $E_R(R/\p)$ occurs in a summand of some $J_i$;
see also Fact~\ref{fact140109a} and Proposition~\ref{lem130622a}.
From the fact
$$\Gamma_\fa(E_R(R/\p))\cong
\begin{cases}
E_R(R/\p)&\text{if $\p\in\VE(\fa)$} \\
0&\text{if $\p\notin\VE(\fa)$} \end{cases}$$
we conclude that $E_R(R/\p)$ occurs in a summand of some $\Gamma_\fa(J)_i$ if and only if $\p\in\VE(\fa)$.
Since $\RG aR\xra\simeq\Gamma_\fa(J)$ is minimal injective resolution by Fact~\ref{fact130622b}, we have
$\supp_{R}(\RG{a}{R})=\VE(\fa)$
by Proposition~\ref{lem130622a}.
From this, we have
\begin{align*}
\supp_{R}(\RG{a}{X}) 
&= \supp_{R}(\Lotimes{\RG{a}{R}}{X}) \\
&= \supp_{R}(X) \bigcap \supp_{R}(\RG{a}{R})\\
&=\supp_{R}(X) \bigcap \VE(\fa)
\end{align*}
by Fact~\ref{fact130619b'} and Proposition~\ref{fact130611a}.
\end{proof}

\begin{prop}\label{prop140709b}
Let $X\in\catd(R)$.
Then the sets $\supp_R(X)$ and $\Supp_R(X)$ have the same minimal elements with respect to containment, that is,
we have $\min(\supp_R(X))=\min(\Supp_R(X))$. 
\end{prop}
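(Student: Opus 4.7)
The plan is to prove the two inclusions $\min(\supp_R(X))\subseteq\min(\Supp_R(X))$ and $\min(\Supp_R(X))\subseteq\min(\supp_R(X))$ separately, with the containment $\supp_R(X)\subseteq\Supp_R(X)$ from Fact~\ref{fact140109a} as the starting point. The main tool will be the localization characterization from Proposition~\ref{prop140111d}, together with the deep nontriviality fact from Fact~\ref{fact140109a} that $Y\simeq 0$ if and only if $\supp_R(Y)=\emptyset$ (citing~\cite{benson:lcstc}), which is the nontrivial input making both inclusions work.

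For $\min(\Supp_R(X))\subseteq\min(\supp_R(X))$, I would take $\q$ minimal in $\Supp_R(X)$ and consider $X_\q\in\catd(R_\q)$. Since every prime of $R_\q$ corresponds to a prime of $R$ contained in $\q$, minimality of $\q$ in $\Supp_R(X)$ forces $\Supp_{R_\q}(X_\q)=\{\q R_\q\}$; in particular $X_\q\not\simeq 0$. By Fact~\ref{fact140109a} this gives $\supp_{R_\q}(X_\q)\neq\emptyset$, and since $\supp_{R_\q}(X_\q)\subseteq\Supp_{R_\q}(X_\q)=\{\q R_\q\}$ we conclude $\q R_\q\in\supp_{R_\q}(X_\q)$. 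Proposition~\ref{prop140111d} then translates this to $\q\in\supp_R(X)$. Minimality of $\q$ in $\supp_R(X)$ is immediate from $\supp_R(X)\subseteq\Supp_R(X)$.

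For the reverse inclusion $\min(\supp_R(X))\subseteq\min(\Supp_R(X))$, I would take $\p$ minimal in $\supp_R(X)$ so that $\p\in\Supp_R(X)$. If $\p$ were not minimal in $\Supp_R(X)$, there would be some $\p'\subsetneq\p$ with $\p'\in\Supp_R(X)$. Applying Zorn's lemma / descending chain considerations in the noetherian ring $R$, choose $\q\in\min(\Supp_R(X))$ with $\q\subseteq\p'$. By the first half of the argument, $\q\in\supp_R(X)$, and $\q\subseteq\p'\subsetneq\p$ contradicts minimality of $\p$ in $\supp_R(X)$.

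The step I expect to be the crux is the translation $X_\q\not\simeq 0 \Longrightarrow \supp_{R_\q}(X_\q)\neq\emptyset$, which relies on the nontrivial result from~\cite{benson:lcstc}; everything else is a routine manipulation of the localization equivalence in Proposition~\ref{prop140111d} and the containment $\supp_R(X)\subseteq\Supp_R(X)$.
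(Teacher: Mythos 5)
Your proof is correct and proceeds by essentially the same method as the paper's: localize at a prime, use the fact that $\supp$ detects nonvanishing (\cite{benson:lcstc}, via Fact~\ref{fact140109a}), and translate back with Proposition~\ref{prop140111d}. Your first inclusion is nearly identical to the paper's; you simply observe $\Supp_{R_\q}(X_\q)=\{\q R_\q\}$ to pin down $\supp_{R_\q}(X_\q)$, whereas the paper takes an arbitrary prime in $\supp_{R_\p}(X_\p)$ and lets minimality of $\p$ force equality. Those are cosmetic variants of one idea.

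The one place you take a slightly longer and more delicate path is the second inclusion. You assume $\p\in\min(\supp_R(X))$, suppose it is not minimal in $\Supp_R(X)$, choose a witness $\p'\subsetneq\p$ in $\Supp_R(X)$, and then "choose $\q\in\min(\Supp_R(X))$ with $\q\subseteq\p'$" before applying the first inclusion. That choice quietly rests on a real theorem: the noetherian hypothesis gives the ACC, not the DCC, on prime ideals, so the existence of a minimal element of $\Supp_R(X)$ below $\p'$ is not immediate. (Zorn's lemma does not apply directly either, since the intersection of a descending chain of primes in $\Supp_R(X)$, while prime, need not itself lie in $\Supp_R(X)$.) What actually makes it work is that $R_{\p'}$ is noetherian local and hence has finite Krull dimension by Krull's height theorem, so descending chains of primes below $\p'$ terminate. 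The paper avoids this entirely: given $\p'\subsetneq\p$ with $\p'\in\Supp_R(X)$, localize at $\p'$ to get $\supp_{R_{\p'}}(X_{\p'})\neq\emptyset$, pick any $\mathfrak r\subseteq\p'$ with $\mathfrak r R_{\p'}\in\supp_{R_{\p'}}(X_{\p'})$, translate to $\mathfrak r\in\supp_R(X)$, and contradict minimality of $\p$ in $\supp_R(X)$ directly since $\mathfrak r\subseteq\p'\subsetneq\p$. So both proofs are valid, but the paper's second inclusion is self-contained while yours leans on an additional (standard, but unstated) input; if you keep your version, you should cite finite local Krull dimension explicitly rather than a vague appeal to Zorn and the noetherian property.
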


\begin{proof}
For the containment $\min(\supp_R(X))\supseteq\min(\Supp_R(X))$, 
fix a prime $\p\in\min(\Supp_R(X))$. It follows that $X_{\p}\not\simeq 0$, so we have $\supp_{R_{\p}}(X_{\p})\neq\emptyset$.
Thus, there is a prime $\q\in\spec(R)$ with $\q\subseteq\p$ and $\q R_{\p}\in\supp_{R_{\p}}(X_{\p})$.
From the next sequence
$$\Lotimes{\kappa(\q)}{X}\simeq\Lotimes[R_{\p}]{\kappa(\q R_{\p})}{X_{\p}}\not\simeq 0$$
it follows that $\q\in\supp_R(X)\subseteq\Supp_R(X)$. 
Since $\q\subseteq\p$, the minimality of $\p$ in $\Supp_R(X)$ implies that $\p=\q\in\supp_R(X)$.
From the containment $\supp_R(X)\subseteq\Supp_R(X)$, the fact that $\p$ is minimal in $\Supp_R(X)$ implies
that it is also minimal in $\supp_R(X)$.

For the reverse containment, fix a prime $\p\in\min(\supp_R(X))\subseteq\Supp_R(X)$. 
Suppose that $\p$ is not minimal in $\Supp_R(X)$, so there is a prime $\q\in\Supp_R(X)$ such that $\q\subsetneq\p$.
By assumption, we have $X_{\q}\not\simeq 0$, so there is a prime $\mathfrak r\in\spec(R)$ such that
$\mathfrak r\subseteq\mathfrak q$ and $\mathfrak r_{\q}\in\supp_{R_{\q}}(X_{\q})$.
As in the previous paragraph, this implies that $\mathfrak r\in\supp_R(X)$, so the minimality of $\p$ implies that
$\p=\mathfrak r\subseteq\q\subsetneq\p$, a contradiction.
\end{proof}

\begin{prop}\label{prop130528c}
Let $X\in\catd(R)$. 
\begin{enumerate}[\rm(a)]
\item\label{prop130528c1}
Then one has $\Supp_{R}(X) \subseteq \VE(\fa)$ if and only if $\supp_{R}(X) \subseteq \VE(\fa)$.
\item\label{prop130528c2}
The Zariski closures of $\Supp_{R}(X)$ and $\supp_{R}(X)$ are equal.
\end{enumerate}
\end{prop}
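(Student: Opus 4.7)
The plan is to deduce both parts from the following key observation, which is essentially extracted from the first part of the proof of Proposition~\ref{prop140709b}: for every $\p \in \Supp_R(X)$, there is some $\q \in \supp_R(X)$ with $\q \subseteq \p$. To see this, $\p \in \Supp_R(X)$ gives $X_\p \not\simeq 0$, so $\supp_{R_\p}(X_\p) \neq \emptyset$ by Fact~\ref{fact140109a}; any $\q R_\p \in \supp_{R_\p}(X_\p)$ then yields, via the equivalence \eqref{prop140111d7}$\Longleftrightarrow$\eqref{prop140111d1} of Proposition~\ref{prop140111d}, a prime $\q \in \supp_R(X)$ contained in $\p$.

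For part~\eqref{prop130528c1}, the forward direction is immediate from the inclusion $\supp_R(X) \subseteq \Supp_R(X)$ of Fact~\ref{fact140109a}. For the converse, assuming $\supp_R(X) \subseteq \VE(\fa)$ and fixing $\p \in \Supp_R(X)$, the observation above produces $\q \in \supp_R(X)$ with $\q \subseteq \p$; since $\q \in \VE(\fa)$, we get $\fa \subseteq \q \subseteq \p$, so $\p \in \VE(\fa)$.

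For part~\eqref{prop130528c2}, one containment $\overline{\supp_R(X)} \subseteq \overline{\Supp_R(X)}$ is again immediate from $\supp_R(X) \subseteq \Supp_R(X)$. For the reverse containment, it suffices to prove $\Supp_R(X) \subseteq \overline{\supp_R(X)}$: given $\p \in \Supp_R(X)$, produce $\q \in \supp_R(X)$ with $\q \subseteq \p$ as above, so $\p \in \VE(\q) \subseteq \overline{\supp_R(X)}$. (Alternatively, part~\eqref{prop130528c1} can then be recovered from part~\eqref{prop130528c2}, since $\VE(\fa)$ is Zariski closed: if $\supp_R(X) \subseteq \VE(\fa)$, then $\overline{\Supp_R(X)} = \overline{\supp_R(X)} \subseteq \VE(\fa)$.)

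There is no genuine obstacle here: the entire argument rests on the localization step that yields $\q$ below $\p$, and that step is handed to us by Proposition~\ref{prop140111d}. Everything else is formal manipulation with containments and Zariski closures.
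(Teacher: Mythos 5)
Your proof is correct and follows essentially the same route as the paper's. The paper's proof of part~(a) invokes Proposition~\ref{prop140709b} (equality of minimal elements) to find, for each $\p \in \Supp_R(X)$, a prime $\q \in \supp_R(X)$ with $\q \subseteq \p$; you instead inline that step, re-deriving it from the localization argument via Fact~\ref{fact140109a} and Proposition~\ref{prop140111d}, which is the exact content of the first half of the proof of Proposition~\ref{prop140709b}. The only organizational difference is in part~(b): the paper deduces it from part~(a) by applying (a) to the defining ideals of the two Zariski closures, whereas you prove the containment $\Supp_R(X) \subseteq \overline{\supp_R(X)}$ directly from the key observation and note that (a) then also follows from (b); both are short formal steps once the key observation is in hand, so this is a matter of taste rather than substance.
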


\begin{proof}
\eqref{prop130528c1}
The forward implication is by the containment $\supp_{R}(X) \subseteq \Supp_{R}(X)$. 
For the converse, assume that $\supp_{R}(X) \subseteq \VE(\fa)$, and 
let $\p\in\Supp_R(X)$. It follows that $\p$ is contained in a minimal element $\q$ of $\Supp_R(X)$,
which is in $\supp_R(X)\subseteq\VE(\fa)$ by Proposition~\ref{prop140709b}.
In other words, we have $\fa\subseteq\q\subseteq\p$, so $\p\in\VE(\fa)$.

\eqref{prop130528c2}
The Zariski closures $\ol{\Supp_R(X)}$ and $\ol{\supp_R(X)}$ are of the form $\VE(\fb)$ and $\VE(\fc)$ for some ideals $\fb,\fc\subseteq R$.
Thus, part~\eqref{prop130528c1} implies that
$\ol{\supp_{R}(X)}$ and $\ol{\Supp_R(X)}$ are contained in each other.
\end{proof}

The next result is dual to Proposition~\ref{fact130611a}, with some restrictions on the complexes involved.

\begin{prop}\label{lem140121a}
If $X\in\catd_-(R)$ and $M\in\catdf_+(R)$, then 
$$\supp_{R}(\Rhom{M}{X}) = \supp_R(M) \bigcap \supp_{R}(X).$$
\end{prop}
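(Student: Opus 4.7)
The plan is to detect membership of a prime $\p\in\spec(R)$ in $\supp_R(\Rhom{M}{X})$ via Proposition~\ref{prop140111d}, which provides the criterion $\p\in\supp_R(Y)\iff\Rhom[R_{\p}]{\kappa(\p)}{Y_{\p}}\not\simeq 0$. Since $M\in\catdf_+(R)$ admits a bounded-below resolution by finitely generated free $R$-modules, $\Rhom$ commutes with localization at $\p$, and the swap adjunction $\Rhom{A}{\Rhom{B}{C}}\simeq\Rhom{\Lotimes{B}{A}}{C}$ yields
$$\Rhom[R_{\p}]{\kappa(\p)}{\Rhom{M}{X}_{\p}}\simeq\Rhom[R_{\p}]{V}{X_{\p}},\qquad V:=\Lotimes[R_{\p}]{\kappa(\p)}{M_{\p}}.$$
Setting $W:=\Rhom[R_{\p}]{\kappa(\p)}{X_{\p}}$ and observing that $V\not\simeq 0\iff\p\in\supp_R(M)$ and $W\not\simeq 0\iff\p\in\supp_R(X)$ (both again by Proposition~\ref{prop140111d}), the task reduces to proving that $\Rhom[R_{\p}]{V}{X_{\p}}\not\simeq 0$ precisely when $V\not\simeq 0$ and $W\not\simeq 0$.

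To analyze this I would work over the field $\kappa(\p)$. Since $M\in\catdf_+(R)$, each $\HH_i(V)\cong\tor^{R_{\p}}_i(\kappa(\p),M_{\p})$ is finite-dimensional over $\kappa(\p)$ and vanishes for $i<\inf(M)$. The derived category of $\kappa(\p)$-vector spaces is semisimple, so
$$V\simeq\bigoplus_{i\geq\inf(M)}\shift^i\kappa(\p)^{\beta_i},\qquad \beta_i:=\dim_{\kappa(\p)}\HH_i(V)<\infty.$$
Applying $\Rhom[R_{\p}]{-}{X_{\p}}$, converting coproducts in the first variable to products, and pulling out shifts gives
$$\Rhom[R_{\p}]{V}{X_{\p}}\simeq\prod_{i\geq\inf(M)}\shift^{-i}W^{\beta_i}.$$

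The final step checks the equivalence. If $V\simeq 0$ (all $\beta_i=0$) or $W\simeq 0$, the product vanishes. Conversely, assume $W\not\simeq 0$ and some $\beta_{i_0}\neq 0$. Here the hypothesis $X\in\catd_-(R)$ becomes crucial: Fact~\ref{fact130622b} gives $n_0:=\sup(W)\leq\sup(X)<\infty$ with $\HH_{n_0}(W)\neq 0$. Since products in $\catd(R_{\p})$ are computed degreewise (products being exact on the category of $R_{\p}$-modules), the degree-$(n_0-i_0)$ homology of the product equals $\prod_{i\geq\inf(M)}\HH_{n_0-i_0+i}(W)^{\beta_i}$, which contains the nonzero factor $\HH_{n_0}(W)^{\beta_{i_0}}$ at $i=i_0$ and hence is itself nonzero.

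The main obstacle I anticipate is this nonvanishing step: neither $V$ nor $W$ is bounded above a priori, so the product may be infinite in both directions, and one must produce a specific nonzero homology class. The finiteness of each $\beta_i$ (guaranteed by $M\in\catdf_+(R)$) together with $\sup(W)<\infty$ (from $X\in\catd_-(R)$) are precisely what make this possible; without either boundedness hypothesis the argument breaks down.
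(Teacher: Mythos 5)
Your proof follows essentially the same route as the paper's: detect membership in $\supp_R(\Rhom MX)$ via Proposition~\ref{prop140111d}, pass $\Rhom MX$ through localization at $\p$, and invoke Hom-tensor adjointness to reduce to the nonvanishing of $\Rhom[R_\p]{V}{X_\p}$ where $V:=\Lotimes[R_\p]{\kappa(\p)}{M_\p}$. The paper finishes by applying tensor cancellation and a second adjunction to land in $\Rhom[\kappa(\p)]{V}{W}$ with $W:=\Rhom[R_\p]{\kappa(\p)}{X_\p}$, then invokes the K\"unneth formula; you instead decompose $V$ into a coproduct of shifted copies of $\kappa(\p)$ and check nonvanishing of the resulting product by hand. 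These are equivalent endgames: the K\"unneth formula over a field is a repackaging of exactly the semisimplicity you exploit.

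One imprecision needs fixing, though. You attribute the isomorphism $\Rhom MX_\p\simeq\Rhom[R_\p]{M_\p}{X_\p}$ to the hypothesis $M\in\catdf_+(R)$ alone, but $X\in\catd_-(R)$ is also required there. If $F\to M$ is a bounded-below resolution by finitely generated free modules, the degree-$n$ term of $\Hom FX$ is $\prod_j\Hom{F_j}{X_{j+n}}$, and localization does not commute with infinite products; boundedness of $F$ below together with boundedness of $X$ above makes each of these products finite, which is precisely where the hypothesis on $X$ does its work. Conversely, the step you single out as the one where $X\in\catd_-(R)$ becomes ``crucial''---taking $n_0=\sup(W)$---does not in fact require it: any $n_0$ with $\HH_{n_0}(W)\neq 0$ serves, since the factor at $i=i_0$ in the degree-$(n_0-i_0)$ homology of the product is already nonzero regardless of whether $W$ is bounded above or $\beta_{i_0}$ is finite. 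So your ingredients and conclusion are sound, but the hypothesis on $X$ is misassigned; the paper's remark that ``our assumptions on $X$ and $M$ explain the first isomorphism'' is pointing at exactly the localization step, not at the final nonvanishing.
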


\begin{proof}
Let $\p\in\spec(R)$. Our assumptions on $X$ and $M$ explain the first isomorphism in the next sequence.
\begin{align*}
\Rhom[R_{\p}]{\kappa(\p)}{\Rhom MX_\p}
&\simeq\Rhom[R_{\p}]{\kappa(\p)}{\Rhom[R_\p]{M_\p}{X_\p}}\\
&\simeq\Rhom[R_{\p}]{\Lotimes[R_\p]{\kappa(\p)}{M_\p}}{X_\p}\\
&\simeq\Rhom[R_{\p}]{\Lotimes[\kappa(\p)]{\kappa(\p)}{(\Lotimes[R_\p]{\kappa(\p)}{M_\p})}}{X_\p}\\
&\simeq\Rhom[\kappa(\p)]{\Lotimes[R_\p]{\kappa(\p)}{M_\p}}{\Rhom[R_\p]{\kappa(\p)}{X_\p}}
\end{align*}
The remaining isomorphisms are Hom-tensor adjointness and tensor cancellation.
Since $\p\in\supp_R(\Rhom MX)$ if and only if $\Rhom[R_{\p}]{\kappa(\p)}{\Rhom MX_\p}\not\simeq 0$,
the above isomorphisms imply that 
$\p\in\supp_R(\Rhom MX)$ if and only if $\Rhom[\kappa(\p)]{\Lotimes[R_\p]{\kappa(\p)}{M_\p}}{\Rhom[R_\p]{\kappa(\p)}{X_\p}}\not\simeq 0$.
The K\"unneth formula tells us that this is so if and only if $\Lotimes[R_\p]{\kappa(\p)}{M_\p}\not\simeq 0\not\simeq\Rhom[R_\p]{\kappa(\p)}{X_\p}$,
that is, if and only if $\p\in\supp_R(M)\bigcap\supp_R(X)$.
\end{proof}

\section{Co-support}\label{sec140110a}

In this section, we study co-support for complexes \emph{\`a la}~\cite{benson:csc}. 
Our main result here is Proposition~\ref{prop140111c}.
It is worth noting that our notion of small co-support is related to minimal  flat resolutions of modules in a manner similar to
the relation between small support and minimal injective resolutions from Proposition~\ref{lem130622a}; 
see~\cite{enochs:dbm}.

\begin{defn}\label{defn130529a}
Let $X\in\catd(R)$.
\begin{enumerate}[(a)]
\item
The ``small''  co-support of $X$ is 
$$\cosupp_R(X)=\{ \mathfrak{p} \in \spec(R)\mid \Rhom{\kappa(\mathfrak{p})}{X} \not\simeq 0\}.$$  
\item
The ``large'' co-support of $X$ is 
$$\Cosupp_R(X)=\{\mathfrak{p} \in \spec(R)\mid \Rhom{R_\p}{X}\not\simeq 0\}.$$
\end{enumerate}
\end{defn}

\begin{fact}\label{defn130503aq}
If $M$ is an $R$-module, then
\begin{align*}
\cosupp_R(M)&=\{ \mathfrak{p} \in \operatorname{Spec}(R)\mid \Ext i{\kappa(\p)} M\neq 0 \hspace{.05in}  \text{for some} \hspace{.05in} i \}\\
\operatorname{Co-supp}_R(M) &= \{ \mathfrak{p} \in \operatorname{Spec}(R)\mid \Ext i{R_\p} M\neq 0 \hspace{.05in}  \text{for some} \hspace{.05in} i\}.
\end{align*}
\end{fact}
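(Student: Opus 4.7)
The plan is to unpack the definitions, reducing the derived-category condition $\Rhom{-}{M}\not\simeq 0$ to the classical condition that some $\Ext{i}{-}{M}$ is nonzero, which is valid because $M$ is concentrated in degree zero.

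First, I would fix an $R$-module $M$ and choose a classical injective resolution $M\xra{\simeq}I$ in the category of $R$-complexes (that is, $0\to M\to I_0\to I_{-1}\to\cdots$ with each $I_i$ injective). Since a bounded-above complex of injectives is semiinjective in the sense of the background section, $I$ serves as a semiinjective resolution of $M$ in $\catd(R)$. Consequently, for every $R$-complex $Y$ one has
\begin{equation*}
\Rhom{Y}{M}\simeq\Hom{Y}{I}
\end{equation*}
in $\catd(R)$, and therefore $\HH_i(\Rhom{Y}{M})\cong\Ext{-i}{Y}{M}$ for all $i\in\bbz$ (with the usual homological/cohomological indexing convention). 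In particular, $\Rhom{Y}{M}\not\simeq 0$ if and only if $\Ext{i}{Y}{M}\neq 0$ for some $i$.

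Applying this observation with $Y=\kappa(\p)$ and with $Y=R_\p$ gives the two stated descriptions: by Definition~\ref{defn130529a}(a),
\begin{equation*}
\p\in\cosupp_R(M)\iff\Rhom{\kappa(\p)}{M}\not\simeq 0\iff\Ext{i}{\kappa(\p)}{M}\neq 0\text{ for some }i,
\end{equation*}
and by Definition~\ref{defn130529a}(b),
\begin{equation*}
\p\in\Cosupp_R(M)\iff\Rhom{R_\p}{M}\not\simeq 0\iff\Ext{i}{R_\p}{M}\neq 0\text{ for some }i.
\end{equation*}

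There is no real obstacle here; the only potentially subtle point is the identification of the classical injective resolution with a semiinjective resolution in the sense used to define $\Rhom{-}{-}$. This is handled by the general fact that a bounded-above complex of injectives is semiinjective (equivalently, DG-injective), which is discussed in the background section and in~\cite{avramov:hdouc}. Once that identification is in place, the statement is immediate from the definitions.
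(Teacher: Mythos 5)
Your proof is correct and is exactly the routine unwinding of definitions that the paper has in mind; the paper states this as a Fact without proof precisely because, as you note, taking a classical injective resolution of $M$ (which is semiinjective since it is a bounded-above complex of injectives) immediately identifies $\HH_{-i}(\Rhom{Y}{M})$ with $\Ext{i}{Y}{M}$ and reduces Definition~\ref{defn130529a} to the stated module-level characterizations.
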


\begin{fact}\label{fact140109ay}
Assume that $(R,\m,k)$ is local, and let $X\in\catd(R)$.
Then Fact~\ref{prop140111a}.
implies that $\m\in\supp_R(X)$ if and only if $\m\in\cosupp_R(X)$.
See Proposition~\ref{prop140111e}\eqref{prop140111e2} for a significant improvement on this. 
\end{fact}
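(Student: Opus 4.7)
The plan is that this result is essentially a direct translation of two conditions already listed in Fact~\ref{prop140111a}, so the proof should amount to matching definitions and invoking that fact. First I would unpack the definitions. Since $\kappa(\m)=k$ in the local case, Definition~\ref{def120925c}(a) gives
\[
\m\in\supp_R(X)\iff \Lotimes{k}{X}\not\simeq 0,
\]
which is precisely condition~\eqref{prop140111a1} of Fact~\ref{prop140111a}. Similarly, Definition~\ref{defn130529a}(a) gives
\[
\m\in\cosupp_R(X)\iff \Rhom{k}{X}\not\simeq 0,
\]
which is precisely condition~\eqref{prop140111a5} of Fact~\ref{prop140111a}.

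Having identified the two membership conditions with items in the list of equivalent statements from Fact~\ref{prop140111a}, I would simply cite the equivalence \eqref{prop140111a1}$\iff$\eqref{prop140111a5} provided there. Since the hypothesis that $(R,\m,k)$ is local is part of the setup of Fact~\ref{prop140111a}, no additional work is required, and no generating sequence for $\m$ needs to be chosen (those sequences appear only in the intermediate Koszul conditions of Fact~\ref{prop140111a}, not in~\eqref{prop140111a1} or~\eqref{prop140111a5}).

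There is no real obstacle here: the entire content of the statement is the observation that small support and small co-support, restricted to the closed point of a local ring, are governed by the same six-way equivalence already on record in Fact~\ref{prop140111a}. The only subtle point worth flagging is that this equivalence is genuinely local in character; for a non-maximal prime $\p$, the corresponding equivalence $\p\in\supp_R(X)\iff\p\in\cosupp_R(X)$ cannot be obtained this way and indeed fails in general, which is why the significant improvement alluded to in the remark requires the separate argument of Proposition~\ref{prop140111e}\eqref{prop140111e2}.
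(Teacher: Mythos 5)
Your proposal is correct and is exactly the argument the paper intends: the statement is just the equivalence of conditions~\eqref{prop140111a1} and~\eqref{prop140111a5} of Fact~\ref{prop140111a}, after noting $\kappa(\m)=k$ so that the two membership conditions are precisely those items. Nothing further is needed.
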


Our next result is a version of Proposition~\ref{prop140111d} for co-support.

\begin{prop}\label{prop140111c}
Let $X\in\catd(R)$ and  $\p\in\spec(R)$, and let  $\x=x_1,\ldots,x_n$ be a generating sequence for $\p$.
Then the following conditions are equivalent:
\begin{enumerate}[\rm(i)]
\item \label{prop140111c1}
$\Lotimes[R_{\p}]{\kappa(\p)}{\Rhom{R_{\p}}{X}} \not\simeq 0$;
\item \label{prop140111c2}
$\Lotimes{K^{R_\p}(\x)}{\Rhom{R_{\p}}X}\not\simeq 0$;
\item \label{prop140111c3}
$\mathbf{L}\Lambda^{\p R_{\p}}(\Rhom{R_{\p}}X)\not\simeq 0$, that is, $\mathbf{L}\Lambda^{\p}(\Rhom{R_{\p}}X)\not\simeq 0$;
\item \label{prop140111c4}
$\p\in\cosupp_R(X)$, i.e., $\Rhom{\kappa(\mathfrak{p})}{X} \not\simeq 0$;
\item \label{prop140111c5}
$\Rhom{K^{R_\p}(\x)}{X}\not\simeq 0$;
\item \label{prop140111c6}
$\mathbf{R}\Gamma_{\p R_{\p}}(\Rhom{R_{\p}}X)\not\simeq 0$, that is, $\mathbf{R}\Gamma_{\p}(\Rhom{R_{\p}}X)\not\simeq 0$;
\item \label{prop140111c7}
$\p R_\p\in\cosupp_{R_\p}(\Rhom{R_\p}X)$.
\end{enumerate}
\end{prop}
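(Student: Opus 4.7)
The plan is to mimic the proof of Proposition~\ref{prop140111d} by applying Fact~\ref{prop140111a} to the $R_\p$-complex $\Rhom{R_\p}{X}$. Since $R_\p$ is local with maximal ideal $\p R_\p$, residue field $\kappa(\p)$, and generating sequence $\x$ (or rather its image in $R_\p$) for $\p R_\p$, Fact~\ref{prop140111a} immediately gives the equivalence of the six conditions:
\begin{gather*}
\Lotimes[R_{\p}]{\kappa(\p)}{\Rhom{R_{\p}}{X}}\not\simeq 0,\qquad
\Lotimes[R_\p]{K^{R_\p}(\x)}{\Rhom{R_{\p}}X}\not\simeq 0,\\
\mathbf{L}\Lambda^{\p R_\p}(\Rhom{R_{\p}}X)\not\simeq 0,\qquad
\mathbf{R}\Gamma_{\p R_\p}(\Rhom{R_{\p}}X)\not\simeq 0,\\
\Rhom[R_\p]{\kappa(\p)}{\Rhom{R_\p}{X}}\not\simeq 0,\qquad
\Rhom[R_\p]{K^{R_\p}(\x)}{\Rhom{R_\p}{X}}\not\simeq 0.
\end{gather*}
This already handles the equivalences among~\eqref{prop140111c1},~\eqref{prop140111c3}, and~\eqref{prop140111c6}, and it identifies~\eqref{prop140111c2} with the second displayed condition after noting that $\Lotimes[R_\p]{K^{R_\p}(\x)}{\Rhom{R_\p}X} \simeq \Lotimes{K^{R_\p}(\x)}{\Rhom{R_\p}X}$ via base change. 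Condition~\eqref{prop140111c7} is precisely the definition of $\p R_\p\in\cosupp_{R_\p}(\Rhom{R_\p}X)$, which is the fifth condition above.

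Next, I need to match conditions~\eqref{prop140111c4} and~\eqref{prop140111c5} to the list. For any $R_\p$-complex $M$, standard Hom--Hom adjunction (change of rings along $R\to R_\p$) yields
$$\Rhom{M}{X}\simeq\Rhom[R_\p]{M}{\Rhom{R_\p}{X}}.$$
Applying this with $M=\kappa(\p)$ gives $\Rhom{\kappa(\p)}{X}\simeq\Rhom[R_\p]{\kappa(\p)}{\Rhom{R_\p}{X}}$, so condition~\eqref{prop140111c4} is equivalent to the fifth displayed condition. Applying it with $M=K^{R_\p}(\x)$ gives $\Rhom{K^{R_\p}(\x)}{X}\simeq\Rhom[R_\p]{K^{R_\p}(\x)}{\Rhom{R_\p}{X}}$, so condition~\eqref{prop140111c5} is equivalent to the sixth displayed condition.

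The one mildly delicate point is verifying that the adjunction isomorphism $\Rhom{M}{X}\simeq\Rhom[R_\p]{M}{\Rhom{R_\p}{X}}$ genuinely holds in $\catd(R)$ for an $R_\p$-complex $M$; this is the main conceptual hurdle, though it is entirely routine. It follows from the usual adjunction $\Rhom{\Lotimes[R_\p]{M}{R_\p}}{X}\simeq\Rhom[R_\p]{M}{\Rhom{R_\p}{X}}$ together with the tensor-cancellation isomorphism $\Lotimes[R_\p]{M}{R_\p}\simeq M$, which is valid because $M$ is already an $R_\p$-complex. With this identification, all seven conditions collapse to the corresponding alternatives from Fact~\ref{prop140111a} for the $R_\p$-complex $\Rhom{R_\p}X$, and the proof is complete.
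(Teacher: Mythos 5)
Your proof is correct and takes essentially the same approach as the paper: apply Fact~\ref{prop140111a} to the $R_\p$-complex $\Rhom{R_\p}X$, then use Hom-tensor adjointness (plus the cancellation $\Lotimes[R_\p]{R_\p}{\kappa(\p)}\simeq\kappa(\p)$, and similarly for the Koszul complex) to translate conditions over $R_\p$ into the corresponding conditions over $R$. You actually spell out the identifications for conditions~(iv), (v), and (vii) more explicitly than the paper does, which just refers back to the proof of Proposition~\ref{prop140111d} for those details.
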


\begin{proof}
Apply Fact~\ref{prop140111a} to the $R_{\p}$-complex $\Rhom{R_\p}X$ as in the proof of Proposition~\ref{prop140111d}.
For instance, we have
$\Lotimes[R_{\p}]{\kappa(\p)}{\Rhom{R_{\p}}{X}} \not\simeq 0$
if and only if
$\Rhom[R_{\p}]{\kappa(\p)}{\Rhom{R_{\p}}{X}} \not\simeq 0$. 
In light of the isomorphisms
\begin{align*}
0
&\not\simeq\Rhom[R_{\p}]{\kappa(\p)}{\Rhom{R_{\p}}{X}}\\
&\simeq\Rhom{\Lotimes[R_\p]{R_{\p}}{\kappa(\p)}}{X}\\
&\simeq\Rhom{\kappa(\p)}{X}
\end{align*}
we have 
$\Lotimes[R_{\p}]{\kappa(\p)}{\Rhom{R_{\p}}{X}} \not\simeq 0$
if and only if
$\Rhom{\kappa(\p)}{X}\not\simeq 0$.
\end{proof}

\begin{disc}\label{rmk140708b}
The equivalence of conditions~\eqref{prop140111c4} and~\eqref{prop140111c6}
in Proposition~\ref{prop140111c} show that our definition
of $\cosupp_R(X)$ is equivalent to that from~\cite{benson:csc}; see~\cite[Remark~4.17]{benson:csc}.
\end{disc}

\begin{cor}\label{fact140109ax}
If $X\in\catd(R)$, then
$\cosupp_R(X)\subseteq\Cosupp_R(X).$
\end{cor}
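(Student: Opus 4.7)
The plan is to show the contrapositive via the characterizations of small co-support furnished by Proposition~\ref{prop140111c}. Fix $\p \in \spec(R)$ and suppose $\p \in \cosupp_R(X)$, so that $\Rhom{\kappa(\p)}{X} \not\simeq 0$. By the equivalence of conditions~\eqref{prop140111c4} and~\eqref{prop140111c1} in Proposition~\ref{prop140111c}, this gives
$$\Lotimes[R_\p]{\kappa(\p)}{\Rhom{R_\p}{X}} \not\simeq 0.$$

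Now I would observe that this forces $\Rhom{R_\p}{X} \not\simeq 0$: if instead $\Rhom{R_\p}{X} \simeq 0$ in $\catd(R_\p)$, then the derived tensor product $\Lotimes[R_\p]{\kappa(\p)}{\Rhom{R_\p}{X}}$ would be zero as well, contradicting the display above. Hence $\p \in \Cosupp_R(X)$ by definition.

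No genuine obstacle is expected, since Proposition~\ref{prop140111c} already repackages small co-support in terms of $\Rhom{R_\p}{X}$, and the implication ``$Y \simeq 0 \implies \Lotimes[R_\p]{\kappa(\p)}{Y} \simeq 0$'' is immediate. The entire argument amounts to invoking the right equivalence from the preceding proposition.
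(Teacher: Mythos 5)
Your argument is correct and coincides with the paper's proof: both invoke the equivalence of conditions~(i) and~(iv) in Proposition~\ref{prop140111c} to pass from $\Rhom{\kappa(\p)}{X}\not\simeq 0$ to $\Lotimes[R_\p]{\kappa(\p)}{\Rhom{R_\p}{X}}\not\simeq 0$, then conclude $\Rhom{R_\p}{X}\not\simeq 0$.
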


\begin{proof}
If $\p\in\cosupp_R(X)$, then 
$\Lotimes[R_{\p}]{\kappa(\p)}{\Rhom{R_{\p}}{X}} \not\simeq 0$
by Proposition~\ref{prop140111c},
so $\Rhom{R_{\p}}{X} \not\simeq 0$,
as desired.
\end{proof}

Our next result compares to part of Fact~\ref{fact140109a}.
Note that the sets $\supp_R(X)$ and  $\cosupp_R(X)$ have maximal elements, since $R$ is noetherian.

\begin{prop}\label{prop140111e}
Let $X\in\catd(R)$.
\begin{enumerate}[\rm(a)]
\item \label{prop140111e1}
We have $\cosupp_R(X)=\emptyset$ if and only if $X\simeq 0$.
\item \label{prop140111e2}
The sets $\supp_R(X)$ and  $\cosupp_R(X)$ have the same maximal elements with respect to containment.
\end{enumerate}
\end{prop}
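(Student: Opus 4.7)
For part~\eqref{prop140111e1}, the forward direction is immediate. For the converse, I will reduce to part~\eqref{prop140111e2} and Fact~\ref{fact140109a}: if $X\not\simeq 0$ then $\supp_R(X)\neq\emptyset$ by Fact~\ref{fact140109a}, and since $R$ is noetherian $\supp_R(X)$ has a maximal element $\m$; by~\eqref{prop140111e2} this $\m$ lies in $\cosupp_R(X)$, so $\cosupp_R(X)\neq\emptyset$.

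For part~\eqref{prop140111e2}, I plan to prove the two containments $\max(\supp_R(X))\subseteq\cosupp_R(X)$ and $\max(\cosupp_R(X))\subseteq\supp_R(X)$ separately. These yield the equality of maximal-element sets, since if $\m\in\max(\supp_R(X))$ were strictly below some $\p\in\cosupp_R(X)$, then extending $\p$ to a maximal element $\q$ of $\cosupp_R(X)$ would place $\q$ in $\supp_R(X)$ by the second containment, contradicting maximality of $\m$; a symmetric argument handles the other direction.

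The key inclusion $\max(\supp_R(X))\subseteq\cosupp_R(X)$ proceeds via a minimal injective resolution. After reducing to $X\in\catd_-(R)$ (by a suitable truncation preserving the chosen maximal element of the support), take a minimal injective resolution $X\res J$. By Proposition~\ref{lem130622a}, $J_i\cong\bigoplus_{\p\in\supp_R(X)}E_R(R/\p)^{(\mu_i^\p)}$, and maximality of $\m$ forces only primes $\p$ with $\p=\m$ or $\m\not\subseteq\p$ to appear. The key computation is $\Hom{\kappa(\m)}{E_R(R/\p)}$: for $\p=\m$ this equals $\kappa(\m)$ by Matlis theory over $R_\m$ (using $E_R(R/\m)\cong E_{R_\m}(\kappa(\m))$); for the remaining $\p$, select $s\in\m\setminus\p$ to see that $s$ acts as zero on $\kappa(\m)$ and invertibly on $E_R(R/\p)$, forcing every $R$-linear map to vanish. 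Thus $\Hom{\kappa(\m)}{J_i}\cong\kappa(\m)^{(\mu_i^\m)}$, and minimality of $J$ makes the induced differentials vanish, so $\Rhom{\kappa(\m)}{X}$ is a direct sum of shifted copies of $\kappa(\m)$ with total multiplicity $\sum_i\mu_i^\m>0$ since $\m\in\supp_R(X)$. This yields $\m\in\cosupp_R(X)$.

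The symmetric inclusion $\max(\cosupp_R(X))\subseteq\supp_R(X)$ should follow from a dual computation through a minimal flat resolution in the sense of Enochs~\cite{enochs:dbm}, as alluded to in the introductory remark of Section~\ref{sec140110a}: the analogous computation should show $\Lotimes{\kappa(\m)}{X}\not\simeq 0$ whenever $\m$ is maximal in $\cosupp_R(X)$. The main obstacle will be handling unbounded $X$ rigorously---the injective side requires a truncation that preserves the distinguished maximal element of $\supp_R(X)$, and the flat side requires analogous minimality and vanishing information for the minimal flat resolution in order to carry out the dual computation.
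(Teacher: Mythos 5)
The paper disposes of this proposition in one line by citing \cite[Theorems 4.5 and 4.13]{benson:csc} (via Remark~\ref{rmk140708b}), whereas you attempt a self-contained argument through minimal injective and flat resolutions. This is a genuinely different route, and the overall logical scaffolding---reduce (a) to (b) via Fact~\ref{fact140109a}, then deduce $\max(\supp_R X)=\max(\cosupp_R X)$ from the two containments $\max(\supp_R X)\subseteq\cosupp_R X$ and $\max(\cosupp_R X)\subseteq\supp_R X$---is sound. Your computation of $\Hom{\kappa(\m)}{E_R(R/\p)}$ and the vanishing of the induced differentials under minimality (most cleanly seen by passing to the minimal injective resolution $J_\m$ of $X_\m$ over $R_\m$, where $\kappa(\m)$ is the residue field, and noting that a quotient of $\kappa(\m)$ cannot sit in the kernel of $J'\to J'_\m$) is correct. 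However, there are two gaps that keep this from being a proof.

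First, for the containment $\max(\supp_R X)\subseteq\cosupp_R X$, you ``reduce to $X\in\catd_-(R)$ by a suitable truncation,'' but this reduction is not actually carried out, and it is not clear that it can be. You need both (i) $\m$ to remain \emph{maximal} in $\supp_R(\tau_{\leq N}X)$, and (ii) $\Rhom{\kappa(\m)}{\tau_{\leq N}X}\not\simeq 0$ to force $\Rhom{\kappa(\m)}{X}\not\simeq 0$. For (i), the triangle $\tau_{\leq N}X\to X\to\tau_{>N}X\to$ only gives $\supp_R(\tau_{\leq N}X)\subseteq\supp_R(X)\cup\supp_R(\tau_{>N}X)$, so new primes above $\m$ may appear in $\supp_R(\tau_{\leq N}X)$; and for (ii), since $\kappa(\p)$ is not a perfect complex (when $\p$ is non-maximal), $\Rhom{\kappa(\m)}{\tau_{>N}X}$ is not bounded, so the long exact sequence in $\Ext$ does not isolate a nonvanishing degree. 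The paper explicitly flags the failure of the key ingredient (Proposition~\ref{lem130622a}) for $X\notin\catd_-(R)$, citing \cite[Remark 2.3]{chen:sirccr}, so this is not a technicality you can wave away.

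Second, and more seriously, the containment $\max(\cosupp_R X)\subseteq\supp_R X$ is not proved at all; you say it ``should follow'' from a dual computation with minimal flat resolutions \emph{\`a la} Enochs--Xu. For modules there is a theory of flat covers, cotorsion envelopes, and dual Bass numbers, but the analogue of Proposition~\ref{lem130622a} relating minimal flat resolutions of complexes to cosupport is not developed here, the relevant flat modules are products of completed localizations $\widehat{R_\p}$ rather than coproducts (so $\Lotimes{\kappa(\m)}{-}$ does not commute with the relevant products), and the unbounded case compounds the difficulty. As stated, this half of the argument is speculation rather than a proof, which makes the whole of part~(b)---and hence part~(a), which you derive from it---incomplete.
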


\begin{proof}
In view of Remark~\ref{rmk140708b}, this follows from~\cite[Theorems 4.5 and 4.13]{benson:csc}.
\end{proof}

The next results are proved like 
Propositions~\ref{prop140710g}--\ref{prop130528c}.
Several of these are in~\cite{benson:csc}, though our proofs are more direct.

\begin{prop}\label{prop140710f}
Given a distinguished triangle $X\to Y\to Z\to$ in $\catd(R)$
one has 
$$\cosupp_R(Y)\subseteq\cosupp_R(X)\bigcup\cosupp_R(Z).$$
\end{prop}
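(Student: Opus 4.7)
The plan is to imitate the proof of Proposition~\ref{prop140710g} in a straightforward dual fashion, replacing the functor $\Lotimes{\kappa(\p)}{-}$ by $\Rhom{\kappa(\p)}{-}$. The key point is that $\Rhom{\kappa(\p)}{-}$, being the right-derived functor of $\Hom{\kappa(\p)}{-}$ (which is covariant in the second variable), is a triangulated functor on $\catd(R)$, so it preserves distinguished triangles.

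Concretely, fix a prime $\p\in\spec(R)$. Applying $\Rhom{\kappa(\p)}{-}$ to the given distinguished triangle $X\to Y\to Z\to$ in $\catd(R)$ produces a distinguished triangle
$$\Rhom{\kappa(\p)}{X}\to\Rhom{\kappa(\p)}{Y}\to\Rhom{\kappa(\p)}{Z}\to$$
in $\catd(R)$. If $\p\notin\cosupp_R(X)\cup\cosupp_R(Z)$, then by Definition~\ref{defn130529a} both outer terms vanish in $\catd(R)$, whence the middle term vanishes as well, so $\p\notin\cosupp_R(Y)$. Taking contrapositives yields the desired containment $\cosupp_R(Y)\subseteq\cosupp_R(X)\cup\cosupp_R(Z)$.

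There is essentially no obstacle here: the whole argument is a formal consequence of the triangulated structure on $\catd(R)$ together with the pointwise definition of $\cosupp_R$. One could alternately phrase the argument via the long exact sequence obtained by applying $\Ext{*}{\kappa(\p)}{-}$, invoking Fact~\ref{defn130503aq}, but the triangulated-functor formulation is cleaner and matches the style of the parallel argument for small support.
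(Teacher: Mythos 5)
Your proof is correct and matches the approach the paper intends: the paper explicitly states that Proposition~\ref{prop140710f} is ``proved like'' Proposition~\ref{prop140710g}, and your argument carries out precisely that dualization, replacing $\Lotimes{\kappa(\p)}{-}$ with the triangulated functor $\Rhom{\kappa(\p)}{-}$ and using the two-out-of-three property for vanishing in a distinguished triangle.
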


\begin{prop}\label{prop140710b}
Given a set $\{X(i)\}_{i\in\Lambda}\subseteq\catd(R)$
one has 
$$\cosupp_R\left(\prod_iX(i)\right)=\bigcup_i\cosupp_R(X(i))\subseteq\cosupp_R\left(\coprod_iX(i)\right).$$
\end{prop}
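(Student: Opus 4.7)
The plan is to parallel the proof of Proposition~\ref{prop140710a}, swapping the roles of $\Lotimes{\kappa(\p)}{-}$ and $\Rhom{\kappa(\p)}{-}$ and exchanging products with coproducts in accordance with the variance. Fix $\p\in\spec(R)$ and an index $j\in\Lambda$. The two structural isomorphisms I would establish are
\begin{align*}
\Rhom{\kappa(\p)}{\prod_i X(i)}
&\simeq \prod_i\Rhom{\kappa(\p)}{X(i)},\\
\Rhom{\kappa(\p)}{\coprod_i X(i)}
&\simeq \Rhom{\kappa(\p)}{X(j)}\oplus\Rhom{\kappa(\p)}{\coprod_{i\neq j}X(i)}.
\end{align*}
The first is just the fact that $\Rhom{M}{-}$ is a right adjoint (to $\Lotimes{M}{-}$) and so preserves products; at the level of resolutions this is the usual $\Hom{-}{\prod_i I(i)}\cong\prod_i\Hom{-}{I(i)}$. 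The second comes from splitting off one summand, $\coprod_i X(i)\simeq X(j)\oplus\coprod_{i\neq j}X(i)$, together with the fact that $\Rhom{\kappa(\p)}{-}$ preserves finite biproducts.

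From the first isomorphism, since homology commutes with products and a product of $R$-modules vanishes iff each factor does, we get $\Rhom{\kappa(\p)}{\prod_i X(i)}\not\simeq 0$ if and only if $\Rhom{\kappa(\p)}{X(i)}\not\simeq 0$ for some $i$. By Definition~\ref{defn130529a}, this is precisely the equality
$$\cosupp_R\!\left(\prod_iX(i)\right)=\bigcup_i\cosupp_R(X(i)).$$
From the second isomorphism, if $\p\in\cosupp_R(X(j))$ then the first direct summand is nonzero, forcing $\Rhom{\kappa(\p)}{\coprod_i X(i)}\not\simeq 0$, which gives the containment $\bigcup_i\cosupp_R(X(i))\subseteq\cosupp_R(\coprod_i X(i))$.

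There is really no main obstacle here; the only point requiring mild care is that $\Rhom{\kappa(\p)}{-}$ need not commute with arbitrary coproducts (so the reverse containment for $\coprod$ cannot be obtained this way), which is exactly why the statement only asserts containment rather than equality in that direction. This asymmetry is the dual of what happens in Proposition~\ref{prop140710a}, where the equality is on the coproduct side and the containment is on the product side.
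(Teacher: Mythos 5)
Your proof is correct and is exactly the dualization the paper intends: the paper does not spell out a separate argument for this proposition but states it is "proved like" the support analogue (Proposition~\ref{prop140710a}), and your argument—using that $\Rhom{\kappa(\p)}{-}$ preserves arbitrary products (for the equality, since homology commutes with products and a product of modules vanishes only when each factor does) and that it preserves the finite biproduct split-off $\coprod_i X(i)\simeq X(j)\oplus\coprod_{i\neq j}X(i)$ (for the containment)—is precisely the dual of the paper's proof of Proposition~\ref{prop140710a}.
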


\begin{prop}\label{cor130602a}
If $X ,Y\in\catd(R)$, then 
$$\operatorname{co-supp}_{R}(\Rhom{X}{Y})= \supp_{R}(X)\bigcap\cosupp_R(Y).$$
\end{prop}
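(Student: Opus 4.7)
The plan is to compute, for each $\p \in \spec(R)$, exactly when $\Rhom{\kappa(\p)}{\Rhom{X}{Y}} \not\simeq 0$, and show this happens precisely when $\p \in \supp_R(X) \cap \cosupp_R(Y)$. By Hom-tensor adjointness, we have
\begin{align*}
\Rhom{\kappa(\p)}{\Rhom{X}{Y}} \simeq \Rhom{\Lotimes{\kappa(\p)}{X}}{Y},
\end{align*}
so $\p \in \cosupp_R(\Rhom{X}{Y})$ if and only if $\Rhom{\Lotimes{\kappa(\p)}{X}}{Y}\not\simeq 0$.

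Next I would exploit the fact that $V := \Lotimes{\kappa(\p)}{X}$ is a complex over the field $\kappa(\p)$, hence quasiisomorphic to its homology complex with zero differential. Writing each $\HH_i(V) \cong \kappa(\p)^{(A_i)}$ for some indexing set $A_i$, we obtain $V \simeq \coprod_i \shift^i\kappa(\p)^{(A_i)}$ in $\catd(R)$. Since $\Rhom{-}{Y}$ sends coproducts to products, this yields
\begin{align*}
\Rhom{V}{Y} \simeq \prod_i \shift^{-i}\prod_{A_i}\Rhom{\kappa(\p)}{Y}.
\end{align*}

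A direct product of complexes is zero in $\catd(R)$ if and only if each factor is zero (this is immediate on homology). Thus the displayed product is nonzero if and only if $\Rhom{\kappa(\p)}{Y} \not\simeq 0$ and at least one $A_i$ is nonempty, i.e., $V \not\simeq 0$. Translating back, this is exactly the condition $\p\in\supp_R(X)$ and $\p\in\cosupp_R(Y)$. The equivalence
\begin{align*}
\p\in\cosupp_R(\Rhom{X}{Y}) \iff \p\in\supp_R(X)\cap\cosupp_R(Y)
\end{align*}
completes the proof.

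The only minor obstacle is the bookkeeping around direct sums versus direct products when invoking $\Rhom$, but this is routine once one observes that a product vanishes in $\catd(R)$ only when every factor does. This parallels the structure of the K\"unneth argument used in Proposition~\ref{fact130611a}, but with $\Rhom$ in place of $\Lotimes{}{}$ and products in place of sums.
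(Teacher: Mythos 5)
Your proof is correct. The paper proves this result ``like Proposition~\ref{fact130611a},'' which after a second application of Hom-tensor adjointness lands everything over the field $\kappa(\p)$, namely
$\Rhom{\kappa(\p)}{\Rhom{X}{Y}}\simeq\Rhom[\kappa(\p)]{\Lotimes{\kappa(\p)}{X}}{\Rhom{\kappa(\p)}{Y}}$
(this is the same chain of isomorphisms written out in Proposition~\ref{lem140121a}), and then invokes the K\"unneth formula to see that the $\kappa(\p)$-complex on the right is nonzero exactly when both $\Lotimes{\kappa(\p)}{X}$ and $\Rhom{\kappa(\p)}{Y}$ are. You stop after one adjunction, staying over $R$, and replace the K\"unneth input by explicitly splitting the $\kappa(\p)$-complex $V=\Lotimes{\kappa(\p)}{X}$ into shifts of $\kappa(\p)$, then tracking the coproduct/product behavior of $\Rhom$ and using that a product of complexes vanishes iff all factors do. The underlying structural fact, that a complex of $\kappa(\p)$-vector spaces splits as a sum of shifted copies of $\kappa(\p)$, is identical in both arguments; your packaging is marginally more elementary in that it avoids citing a K\"unneth theorem for $\Rhom$ over a field, at the cost of one extra step of bookkeeping with sums and products.
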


\begin{prop}\label{lem130611ax}
If $X\in\catd(R)$, then 
$$\cosupp_{R}(\LL{a}{N}) = \cosupp_{R}(N) \bigcap \VE(\fa).$$
\end{prop}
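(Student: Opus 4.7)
The plan is to mirror the proof of Proposition~\ref{lem130611a} (its support analogue), replacing the ingredients with their co-support counterparts that have already been established.

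First, I would invoke the isomorphism from Fact~\ref{fact130619b'} to rewrite
\[
\LL aN \simeq \Rhom{\RG aR}{N}.
\]
This converts the question about the co-support of $\LL aN$ into a question about the co-support of a derived Hom complex.

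Next, I would apply Proposition~\ref{cor130602a} to this Hom complex, obtaining
\[
\cosupp_R(\LL aN) = \cosupp_R(\Rhom{\RG aR}{N}) = \supp_R(\RG aR) \cap \cosupp_R(N).
\]
Finally, the identification $\supp_R(\RG aR) = \VE(\fa)$ was already established in Proposition~\ref{lem130611a}, so substituting this yields the desired equality.

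There is no real obstacle here: all three ingredients (the $\Rhom$-description of $\LL a$, the co-support formula for $\Rhom$, and the computation of $\supp_R(\RG aR)$) are in hand, and the argument is a one-line chain of substitutions. The only point worth double-checking is that Proposition~\ref{cor130602a} applies without restriction on the complexes, which it does, since $\RG aR$ and $N$ are arbitrary objects of $\catd(R)$.
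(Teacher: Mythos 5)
Your proposal is correct and follows exactly the route the paper intends: the paper states that Proposition~\ref{lem130611ax} is ``proved like Propositions~\ref{prop140710g}--\ref{prop130528c},'' and the relevant model is the proof of Proposition~\ref{lem130611a}, which you have faithfully dualized by replacing $\RG a-\simeq\Lotimes{\RG aR}{-}$ and Proposition~\ref{fact130611a} with $\LL a-\simeq\Rhom{\RG aR}{-}$ and Proposition~\ref{cor130602a}, and reusing $\supp_R(\RG aR)=\VE(\fa)$.
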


\begin{prop}\label{prop140709c}
Let $X\in\catd(R)$.
Then the sets $\cosupp_R(X)$ and $\Cosupp_R(X)$ have the same minimal elements with respect to containment, that is,
we have $\min(\cosupp_R(X))=\min(\Cosupp_R(X))$. 
\end{prop}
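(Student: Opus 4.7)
The plan is to mirror the proof of Proposition~\ref{prop140709b}, substituting the co-support machinery (Propositions~\ref{prop140111c} and~\ref{prop140111e}\eqref{prop140111e1}) wherever the support proof used Fact~\ref{prop140111a} and Fact~\ref{fact140109a}. The containment $\cosupp_R(X)\subseteq\Cosupp_R(X)$ from Corollary~\ref{fact140109ax} is the co-support analogue of the containment used to close up the minimal-element argument.

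For the containment $\min(\Cosupp_R(X))\subseteq\min(\cosupp_R(X))$, I would fix $\p\in\min(\Cosupp_R(X))$ so that $\Rhom{R_{\p}}{X}\not\simeq 0$. Proposition~\ref{prop140111e}\eqref{prop140111e1}, applied over the ring $R_{\p}$ to the complex $\Rhom{R_{\p}}{X}$, then produces a prime $\q R_{\p}\in\cosupp_{R_{\p}}(\Rhom{R_{\p}}{X})$ for some $\q\subseteq\p$. The equivalence of conditions~\eqref{prop140111c4} and~\eqref{prop140111c7} in Proposition~\ref{prop140111c} (after noting that $\kappa(\q)$ is an $R_{\p}$-module and that adjointness gives $\Rhom[R_{\p}]{\kappa(\q)}{\Rhom{R_{\p}}{X}}\simeq\Rhom{\kappa(\q)}{X}$) then forces $\q\in\cosupp_R(X)\subseteq\Cosupp_R(X)$. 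Minimality of $\p$ in $\Cosupp_R(X)$ yields $\p=\q\in\cosupp_R(X)$, and the containment in Corollary~\ref{fact140109ax} makes $\p$ minimal there as well.

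For the reverse containment $\min(\cosupp_R(X))\subseteq\min(\Cosupp_R(X))$, I would argue by contradiction: fix $\p\in\min(\cosupp_R(X))\subseteq\Cosupp_R(X)$ and suppose there exists $\q\in\Cosupp_R(X)$ with $\q\subsetneq\p$. Running the same localize-and-descend argument as above on $\Rhom{R_{\q}}{X}\not\simeq 0$ produces $\mathfrak r\subseteq\q$ with $\mathfrak r\in\cosupp_R(X)$, whence $\mathfrak r\subsetneq\p$ contradicts the minimality of $\p$ in $\cosupp_R(X)$.

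The only non-routine point is the transfer step $\cosupp_{R_{\p}}(\Rhom{R_{\p}}{X})\rightsquigarrow\cosupp_R(X)$; the rest is formal once one mirrors the support argument. Since Proposition~\ref{prop140111c} packages precisely the needed identifications, I expect no serious obstacle beyond checking that the adjointness isomorphism is correctly deployed when passing between the ambient ring and its localization.
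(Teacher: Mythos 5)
Your proof is correct and takes essentially the same route the paper intends: the text states that Proposition~\ref{prop140709c} is proved like Proposition~\ref{prop140709b}, i.e., by localizing, invoking nonemptiness of (co-)support for nonzero complexes, and transferring back down. You correctly identify the one non-formal point (the transfer $\q R_\p \in \cosupp_{R_\p}(\Rhom{R_\p}{X}) \Leftrightarrow \q\in\cosupp_R(X)$) and dispose of it via the adjointness isomorphism $\Rhom[R_\p]{\kappa(\q)}{\Rhom{R_\p}{X}}\simeq\Rhom{\kappa(\q)}{X}$, exactly as Proposition~\ref{prop140111c} does for $\q=\p$.
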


\begin{prop}\label{prop130528cc}
Let $X\in\catd(R)$. 
\begin{enumerate}[\rm(a)]
\item\label{prop130528cc1}
Then one has $\Cosupp_{R}(X) \subseteq \VE(\fa)$ if and only if $\cosupp_{R}(X) \subseteq \VE(\fa)$.
\item\label{prop130528cc2}
The Zariski closures of $\Cosupp_{R}(X)$ and $\cosupp_{R}(X)$ are equal.
\end{enumerate}
\end{prop}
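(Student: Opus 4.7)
The plan is to mirror the proof of Proposition~\ref{prop130528c}, replacing each tool for small/large support with its co-support analogue developed in this section. The key inputs are the containment $\cosupp_R(X)\subseteq\Cosupp_R(X)$ from Corollary~\ref{fact140109ax} and the minimal-element equality $\min(\cosupp_R(X))=\min(\Cosupp_R(X))$ from Proposition~\ref{prop140709c}; once these are in hand, the argument is formally identical to the support case.

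For part~\eqref{prop130528cc1}, the forward implication is immediate from Corollary~\ref{fact140109ax}: if $\Cosupp_R(X)\subseteq\VE(\fa)$ then $\cosupp_R(X)\subseteq\Cosupp_R(X)\subseteq\VE(\fa)$. For the converse, I assume $\cosupp_R(X)\subseteq\VE(\fa)$ and take an arbitrary $\p\in\Cosupp_R(X)$. Since $R$ is noetherian, the primes of $R$ contained in $\p$ satisfy the descending chain condition, so there is a minimal element $\q$ of $\Cosupp_R(X)$ with $\q\subseteq\p$. By Proposition~\ref{prop140709c} this $\q$ lies in $\min(\cosupp_R(X))$, hence $\q\in\cosupp_R(X)\subseteq\VE(\fa)$, which yields $\fa\subseteq\q\subseteq\p$ and therefore $\p\in\VE(\fa)$, as required.

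Part~\eqref{prop130528cc2} then follows formally from part~\eqref{prop130528cc1}. Writing the Zariski closures as $\overline{\Cosupp_R(X)}=\VE(\fb)$ and $\overline{\cosupp_R(X)}=\VE(\fc)$ for some ideals $\fb,\fc\subseteq R$, the definition of closure gives $\Cosupp_R(X)\subseteq\VE(\fb)$, so by~\eqref{prop130528cc1} also $\cosupp_R(X)\subseteq\VE(\fb)$, hence $\overline{\cosupp_R(X)}\subseteq\VE(\fb)=\overline{\Cosupp_R(X)}$. Applying~\eqref{prop130528cc1} in the other direction to the containment $\cosupp_R(X)\subseteq\VE(\fc)$ yields the reverse inclusion, giving equality of the two closures.

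I do not expect a genuine obstacle here: the entire content of the argument is already packaged in Corollary~\ref{fact140109ax} and Proposition~\ref{prop140709c}. The only point that requires a moment's thought is the existence, for an arbitrary $\p\in\Cosupp_R(X)$, of a minimal element of $\Cosupp_R(X)$ contained in $\p$; this is the standard use of noetherianity of $\spec(R)$ applied to the set of primes in $\Cosupp_R(X)$ contained in $\p$, and is the same step used implicitly in Proposition~\ref{prop130528c}\eqref{prop130528c1}.
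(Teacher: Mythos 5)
Your proof is correct and is exactly the argument the paper intends: the paper simply states that Proposition~\ref{prop130528cc} is ``proved like'' Proposition~\ref{prop130528c}, and you carry out that analogy faithfully, substituting Corollary~\ref{fact140109ax} for the containment $\supp_R(X)\subseteq\Supp_R(X)$ and Proposition~\ref{prop140709c} for Proposition~\ref{prop140709b}. Your explicit justification of the existence of a minimal element below a given prime (finite Krull dimension of the noetherian local ring $R_\p$) is a point the paper leaves implicit, but it is exactly right.
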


\section{Morphisms}\label{sec140710a}

In this section, we study some consequences of support conditions for morphisms in $\catd(R)$. 
In particular, we prove
Theorem~\ref{thm130318ax} from the introduction; see Theorems~\ref{thm130318aqq} and~\ref{thm130318aq}.
These results are key for our work in~\cite{sather:ascfc}, e.g., for our version of Foxby equivalence
in the adic setting.

\begin{lem}\label{lem130318aqq}
Let $X,M\in\catd(R)$  with $\supp_R(X)\subseteq\supp_R(M)$.
Then  the following conditions are equivalent:
\begin{enumerate}[\rm(i)]
\item \label{lem130318aqqi}
$X\simeq 0$;
\item \label{lem130318aqqii}
$\Rhom MX\simeq 0$;
\item \label{lem130318aqqiii}
$\Lotimes MX\simeq 0$.
\end{enumerate}
\end{lem}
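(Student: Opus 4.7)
The plan is to observe that $(\mathrm{i})\Rightarrow(\mathrm{ii})$ and $(\mathrm{i})\Rightarrow(\mathrm{iii})$ are immediate, so only the two converses require argument. In each case, I would translate the vanishing of $\Lotimes MX$ or $\Rhom MX$ into an intersection involving $\supp_R(M)$ and an invariant of $X$, and then use the containment $\supp_R(X)\subseteq\supp_R(M)$ to force that invariant to be empty, whence $X\simeq 0$.

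For $(\mathrm{iii})\Rightarrow(\mathrm{i})$, Proposition~\ref{fact130611a} gives $\supp_R(M)\cap\supp_R(X)=\supp_R(\Lotimes MX)$; the hypothesis $\Lotimes MX\simeq 0$ makes the right side empty by Fact~\ref{fact140109a}, and the containment collapses the left side to $\supp_R(X)$, so a second appeal to Fact~\ref{fact140109a} yields $X\simeq 0$.

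The implication $(\mathrm{ii})\Rightarrow(\mathrm{i})$ is subtler, and is where I expect the main obstacle. The natural analogue would be a support formula $\supp_R(\Rhom MX)=\supp_R(M)\cap\supp_R(X)$, but Proposition~\ref{lem140121a} requires $M\in\catdf_+(R)$ and $X\in\catd_-(R)$, neither of which is assumed here. My workaround is to pivot to co-support: Proposition~\ref{cor130602a} gives $\cosupp_R(\Rhom MX)=\supp_R(M)\cap\cosupp_R(X)$, and this set is empty by Proposition~\ref{prop140111e}\eqref{prop140111e1} applied to $\Rhom MX\simeq 0$. Assuming for contradiction that $X\not\simeq 0$, the set $\supp_R(X)$ is nonempty and, since $R$ is noetherian, admits a maximal element $\fp$. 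By Proposition~\ref{prop140111e}\eqref{prop140111e2}, $\fp$ is simultaneously a maximal element of $\cosupp_R(X)$, so in particular $\fp\in\cosupp_R(X)$. On the other hand $\fp\in\supp_R(X)\subseteq\supp_R(M)$, contradicting the previous display.

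The conceptual point — and the reason the hypothesis can be phrased purely in terms of small support — is that Proposition~\ref{prop140111e}\eqref{prop140111e2} bridges the support containment on $X$ with the co-support formula that governs $\Rhom MX$, supplying exactly the prime that cannot live on both sides once $\Rhom MX$ is forced to vanish.
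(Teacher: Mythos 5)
Your proof is correct and takes essentially the same approach as the paper: the tensor direction uses Proposition~\ref{fact130611a} together with Fact~\ref{fact140109a}, and the Hom direction pivots to co-support via Proposition~\ref{cor130602a} and exploits the shared maximal elements of $\supp_R(X)$ and $\cosupp_R(X)$ from Proposition~\ref{prop140111e}. The only cosmetic difference is that you frame the Hom direction as a contradiction where the paper argues by contrapositive.
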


\begin{proof}
It suffices to prove the implications \eqref{lem130318aqqii}$\implies$\eqref{lem130318aqqi} and 
\eqref{lem130318aqqiii}$\implies$\eqref{lem130318aqqi}.

\eqref{lem130318aqqiii}$\implies$\eqref{lem130318aqqi}.
Proposition~\ref{fact130611a} explains the next sequence:
$$\supp_{R}(\Lotimes{X}{M})=\supp_{R}(X) \bigcap \supp_{R}(M)=  \supp_{R}(X).$$ 
Thus, we have $X\simeq 0$ if and only if $\Lotimes XM\simeq 0$ by Fact~\ref{fact140109a}.

\eqref{lem130318aqqii}$\implies$\eqref{lem130318aqqi}.
Assume that $X\not\simeq 0$.
This implies that $\supp_R(X)\neq\emptyset$, so let $\p$ be a maximal element of $\supp_R(X)$. 
It follows that $\p\in\supp_R(M)$ by assumption. 
Proposition~\ref{prop140111e}\eqref{prop140111e2} implies that $\p$ is in $\cosupp_R(X)$,
hence in $$\supp_R(M)\cap\cosupp_R(X)=\cosupp_R(\Rhom MX).$$
We conclude that $\supp_R(\Rhom MX)\neq\emptyset$, hence $\Rhom MX\not\simeq 0$.
\end{proof}

Our next result contains part of Theorem~\ref{thm130318ax} from the introduction.

\begin{thm}\label{thm130318aqq}
Let $M\in\catd(R)$, and let
$f\colon Y \to Z$ be morphism in $\catd(R)$ with $\supp_R(Y), \supp_R(Z) \subseteq \supp_R(M)$.
The following conditions are equivalent:
\begin{enumerate}[\rm(i)]
\item \label{thm130318aqqi}
$f$ is an isomorphism in $\catd(R)$;
\item \label{thm130318aqqii}
$\Rhom{M}{f}$ is an isomorphism in $\catd(R)$;
\item \label{thm130318aqqiii}
$\Lotimes{M}{f}$ is an isomorphism in $\catd(R)$.
\end{enumerate}
\end{thm}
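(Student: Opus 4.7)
The plan is to reduce the theorem to the object version already proved in Lemma~\ref{lem130318aqq} by passing to the mapping cone of $f$. Let $W=\cone(f)$, so that we have a distinguished triangle
\[
Y\xra{f} Z\to W\to
\]
in $\catd(R)$. Since cones respect the functors $\Rhom{M}{-}$ and $\Lotimes{M}{-}$ (both are triangulated), there are induced triangles whose third terms are $\Rhom{M}{W}$ and $\Lotimes{M}{W}$, respectively. Thus each of the three conditions in the theorem is equivalent to vanishing of the corresponding cone:
\begin{itemize}
\item[(i)] $f$ is an isomorphism $\iff W\simeq 0$;
\item[(ii)] $\Rhom{M}{f}$ is an isomorphism $\iff \Rhom{M}{W}\simeq 0$;
\item[(iii)] $\Lotimes{M}{f}$ is an isomorphism $\iff \Lotimes{M}{W}\simeq 0$.
\end{itemize}

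Next I would verify the hypothesis needed for Lemma~\ref{lem130318aqq}: that $\supp_R(W)\subseteq\supp_R(M)$. Applying Proposition~\ref{prop140710g} to the triangle above yields
\[
\supp_R(W)\subseteq\supp_R(Y)\bigcup\supp_R(Z)\subseteq\supp_R(M),
\]
where the final containment is the hypothesis of the theorem.

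With $X:=W$, Lemma~\ref{lem130318aqq} now gives the equivalence of $W\simeq 0$, $\Rhom{M}{W}\simeq 0$, and $\Lotimes{M}{W}\simeq 0$. In combination with the cone translations above, this yields the equivalence of conditions~\eqref{thm130318aqqi}, \eqref{thm130318aqqii}, and~\eqref{thm130318aqqiii}, completing the proof.

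There is no real obstacle here; the argument is essentially formal once Lemma~\ref{lem130318aqq} is available. The only point that requires mild care is the triangulated-functor step: namely, that $\Rhom{M}{-}$ and $\Lotimes{M}{-}$ send the distinguished triangle $Y\to Z\to W\to$ to distinguished triangles, so that their cones are identified with $\Rhom{M}{W}$ and $\Lotimes{M}{W}$. This is standard and needs no further comment.
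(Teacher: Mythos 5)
Your argument is correct and is essentially identical to the paper's: both pass to the cone of $f$, use Proposition~\ref{prop140710g} to check $\supp_R(\cone(f))\subseteq\supp_R(M)$, and then invoke Lemma~\ref{lem130318aqq} together with the fact that $\Rhom M-$ and $\Lotimes M-$ are triangulated. The only cosmetic difference is that the paper treats the equivalences $(\mathrm{i})\Leftrightarrow(\mathrm{iii})$ and $(\mathrm{i})\Leftrightarrow(\mathrm{ii})$ separately, while you handle all three at once.
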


\begin{proof}
\eqref{thm130318aqqi}$\iff$\eqref{thm130318aqqiii}.
There is a distinguished triangle
$$Y\xra fZ\to X\to$$
in $\catd(R)$.
The condition $\operatorname{supp}(Y), \operatorname{supp}(Z) \subseteq \supp_R(M)$ implies that
$\operatorname{supp}(X) \subseteq\supp_R(M)$.
Thus,  we have $X\simeq 0$ if and only if $\Lotimes{M}X\simeq 0$ by Lemma~\ref{lem130318aqq}.

Also, the above triangle gives rise to another distinguished triangle
\begin{gather*}
\Lotimes{M}Y\xra{\Lotimes{M}f}\Lotimes{M}Z\to \Lotimes{M}X\to 
\end{gather*}
in $\catd(R)$. 
The morphism $f$ is an isomorphism in $\catd(R)$ if and only if $X\simeq 0$; and
$\Lotimes{M}f$ is an isomorphism in $\catd(R)$ if and only if $\Lotimes{M}X\simeq 0$.
Thus, the desired equivalence follows from the previous paragraph.

\eqref{thm130318aqqi}$\iff$\eqref{thm130318aqqii}.
This is handled similarly using $\Rhom M-$.
\end{proof}

\begin{cor}\label{thm130318aqqc}
Let 
$f\colon Y \to Z$ be morphism in $\catd(R)$ with $\supp_R(Y), \supp_R(Z) \subseteq\VE(\fa)$, 
and let $K$ be the Koszul complex on a generating sequence for $\fa$.
Then  the following conditions are equivalent:
\begin{enumerate}[\rm(i)]
\item \label{thm130318aqqci}
$f$ is an isomorphism in $\catd(R)$;
\item \label{thm130318aqqcii}
$\Rhom{K}{f}$ is an isomorphism in $\catd(R)$;
\item \label{thm130318aqqciii}
$\Rhom{R/\fa}{f}$ is an isomorphism in $\catd(R)$;
\item \label{thm130318aqqciiii}
$\LL{a}{f}$ is an isomorphism in $\catd(R)$;
\item \label{thm130318aqqciiz}
$\Lotimes{K}{f}$ is an isomorphism in $\catd(R)$;
\item \label{thm130318aqqciiiz}
$\Lotimes{R/\fa}{f}$ is an isomorphism in $\catd(R)$;
\item \label{thm130318aqqciiiiz}
$\RG{a}{f}$ is an isomorphism in $\catd(R)$.
\end{enumerate}
\end{cor}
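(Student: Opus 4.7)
The plan is to reduce everything to Theorem~\ref{thm130318aqq} by making three separate choices of $M$ and verifying the support hypothesis in each case; the equivalence with conditions involving $\RG a-$ and $\LL a-$ will then follow via the identifications from Fact~\ref{fact130619b'}.

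First I would apply Theorem~\ref{thm130318aqq} with $M = K$. By Fact~\ref{fact140109a} we have $\supp_R(K) = \supp_R(R/\fa) = \VE(\fa)$, so the hypothesis $\supp_R(Y), \supp_R(Z) \subseteq \supp_R(K)$ is exactly what we are given. This yields the equivalence of~\eqref{thm130318aqqci}, \eqref{thm130318aqqcii}, and~\eqref{thm130318aqqciiz}. Running the same argument with $M = R/\fa$ (again using $\supp_R(R/\fa) = \VE(\fa)$) gives the equivalence of~\eqref{thm130318aqqci}, \eqref{thm130318aqqciii}, and~\eqref{thm130318aqqciiiz}.

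Next I would take $M = \RG aR$. By Proposition~\ref{lem130611a} we have $\supp_R(\RG aR) = \VE(\fa)$, so again the hypothesis of Theorem~\ref{thm130318aqq} is satisfied. That theorem tells us that $f$ is an isomorphism if and only if $\Rhom{\RG aR}{f}$ is, if and only if $\Lotimes{\RG aR}{f}$ is. Using the natural isomorphisms $\RG a- \simeq \Lotimes{\RG aR}{-}$ and $\LL a- \simeq \Rhom{\RG aR}{-}$ from Fact~\ref{fact130619b'}, these translate directly into the statements that $\RG af$ is an isomorphism and that $\LL af$ is an isomorphism, giving the equivalence of~\eqref{thm130318aqqci}, \eqref{thm130318aqqciiii}, and~\eqref{thm130318aqqciiiiz}.

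Combining the three equivalences through their common condition~\eqref{thm130318aqqci} yields the full chain of seven equivalent conditions. There is no real obstacle here, since the support hypothesis is handed to us and the equivalences drop out of a single appeal to the main theorem for each choice of $M$; the only item requiring a moment of thought is the passage from $\Rhom{\RG aR}{-}$ and $\Lotimes{\RG aR}{-}$ to $\LL a-$ and $\RG a-$, which is immediate from Fact~\ref{fact130619b'}.
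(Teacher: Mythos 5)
Your proof is correct and follows the same route as the paper's: you verify the support hypothesis $\supp_R(M)=\VE(\fa)$ for each of $M=K$, $M=R/\fa$, and $M=\RG aR$ (via Fact~\ref{fact140109a} and Proposition~\ref{lem130611a}), apply Theorem~\ref{thm130318aqq} three times, and translate the $\RG aR$-cases into $\RG a-$ and $\LL a-$ using Fact~\ref{fact130619b'}. If anything, you state the argument slightly more explicitly than the paper, which mentions only the torsion identification $\RG a-\simeq\Lotimes{\RG aR}{-}$ while implicitly using the companion identification $\LL a-\simeq\Rhom{\RG aR}{-}$ as well.
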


\begin{proof}
We have $\supp_R(K)=\supp_R(R/\fa)=\supp_R(\RG aR)=\VE(\fa)$ by
Fact~\ref{fact140109a} and Proposition~\ref{lem130611a}.
Since $\RG a-\simeq \Lotimes{\RG aR}-$ by Fact~\ref{fact130619b'}, condition~\eqref{thm130318aqqci}
is equivalent to each of the conditions~\eqref{thm130318aqqcii}--\eqref{thm130318aqqciiiiz} by Theorem~\ref{thm130318aqq}.
\end{proof}

Our next result recovers part of~\cite[Corollary 5.7(1)]{benson:lcstc}.
For $X\in\catd_-(R)$, it can be proved using Proposition~\ref{lem130611a}.
See, e.g., \cite{dwyer:cmtm, yekutieli:ccc, yekutieli:hct, yekutieli:sccmc} for more on these complexes.

\begin{prop}\label{prop130619ad}
Let $X\in\catd(R)$.
Then one has $\supp_R(X)\subseteq V(\fa)$
if and only if the natural morphism $\RG aX\to X$
is an isomorphism in $\catd(R)$.
\end{prop}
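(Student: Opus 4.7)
For the straightforward direction, assume the natural morphism $\RG aX\to X$ is an isomorphism in $\catd(R)$. Then Proposition~\ref{lem130611a} gives $\supp_R(X)=\supp_R(\RG aX)=\supp_R(X)\cap\VE(\fa)\subseteq\VE(\fa)$, so no further work is required.

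For the nontrivial direction, assume $\supp_R(X)\subseteq\VE(\fa)$ and set $f\colon\RG aX\to X$ to be the natural morphism. I will apply Theorem~\ref{thm130318aqq} with $M=R/\fa$. The hypotheses of that theorem are met: $\supp_R(R/\fa)=\VE(\fa)$ by Fact~\ref{fact140109a}; $\supp_R(\RG aX)=\supp_R(X)\cap\VE(\fa)\subseteq\VE(\fa)$ by Proposition~\ref{lem130611a}; and $\supp_R(X)\subseteq\VE(\fa)$ by assumption. Theorem~\ref{thm130318aqq} therefore reduces the problem to showing that $\Lotimes{R/\fa}{f}$ is an isomorphism in $\catd(R)$.

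Using Fact~\ref{fact130619b'} together with associativity of $\lotimes$, the morphism $\Lotimes{R/\fa}{f}$ is naturally identified with the morphism
$$\Lotimes{\RG a(R/\fa)}{X}\to\Lotimes{R/\fa}{X}$$
induced by the natural morphism $\RG a(R/\fa)\to R/\fa$. So it suffices to prove this latter morphism is an isomorphism. Here Corollary~\ref{prop130619a}(a) does the heavy lifting: the complex $R/\fa\in\catd_-(R)$ has $\supp_R(R/\fa)=\VE(\fa)$, so its minimal injective resolution $R/\fa\xra{\simeq}J$ consists of $\fa$-torsion modules. Consequently $\Gamma_\fa(J)=J$, and the natural morphism $\RG a(R/\fa)\to R/\fa$ is represented by the identity on $J$, hence is an isomorphism.

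The main obstacle is a bookkeeping matter: one must verify that the natural isomorphism identifying $\Lotimes{R/\fa}{\RG aX}$ with $\Lotimes{\RG a(R/\fa)}{X}$ really does carry $\Lotimes{R/\fa}{f}$ to the morphism induced by $\RG a(R/\fa)\to R/\fa$ tensored with $X$. This amounts to a naturality check against the augmentation $\RG aR\to R$ implicit in Fact~\ref{fact130619b'}, after which the conclusion is immediate.
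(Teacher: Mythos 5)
Your proof is correct, and it takes a genuinely different route from the paper's. Both proofs handle the forward direction identically, by reducing to Theorem~\ref{thm130318aqq}, but the two differ in the choice of test complex $M$ and in how they establish that $\Lotimes{M}{f}$ is an isomorphism. The paper takes $M=\RG aR$ and gets the isomorphism $\Lotimes{M}{f}\simeq\RG a f\colon\RG a{\RG aX}\to\RG aX$ for free from the idempotence property in the Corollary to Theorem~(0.3)* of the Alonso--Jerem\'ias--Lipman paper. You instead take $M=R/\fa$, which reduces the problem to showing that the natural map $\RG a(R/\fa)\to R/\fa$ is an isomorphism, and you prove this directly and concretely from Corollary~\ref{prop130619a}\eqref{prop130619a1}: the minimal injective resolution of $R/\fa$ is already $\fa$-torsion, so $\Gamma_\fa$ fixes it. This buys you a more self-contained argument that leans on machinery proved in the paper (minimal injective resolutions and Proposition~\ref{lem130622a}) rather than on the external idempotence result; the cost is that you have to identify $\Lotimes{R/\fa}{f}$ with $\Lotimes{(\RG a(R/\fa)\to R/\fa)}{X}$, which requires knowing that the natural morphism $f\colon\RG aX\to X$ corresponds, under $\RG aX\simeq\Lotimes{\RG aR}{X}$, to the morphism $\Lotimes{\varepsilon}{X}$ induced by the augmentation $\varepsilon\colon\RG aR\to R$. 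You flag this compatibility honestly; it does hold (it is part of the content behind Fact~\ref{fact130619b'}, and the paper's own proof implicitly uses the same compatibility to equate $\RG a f$ with $\Lotimes{\RG aR}{f}$), so the argument is sound.
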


\begin{proof}
Assume that $\supp_R(X)\subseteq V(\fa)$. The Corollary
to~\cite[Theorem (0.3)*]{lipman:lhcs} implies that the induced morphism $\RG af\colon\RG a{\RG aX}\to \RG aX$ is an isomorphism in $\catd(R)$. 
Since we have $\supp_R(X),\supp_R(\RG aX)\subseteq V(\fa)$ by Proposition~\ref{lem130611a},
we conclude from Theorem~\ref{thm130318aqq} that $f$ is an isomorphism.

The converse follows from Proposition~\ref{lem130611a}.
\end{proof}

\begin{prop}\label{lem130805a}
Let $X,Y\in\catd(R)$  such that $\supp_R(X),\supp_R(Y)\subseteq V(\fa)$.
If $\LL aX\simeq\LL a Y$ in $\catd(R)$, then $X\simeq Y$.
\end{prop}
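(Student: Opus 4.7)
The plan is to apply the functor $\RG a$ to the given isomorphism $\LL aX \simeq \LL aY$ and then invoke Proposition~\ref{prop130619ad} together with the derived Greenlees-May identity $\RG a Z \simeq \RG a\LL aZ$ (which holds for every $Z \in \catd(R)$) to conclude. By the support hypotheses and Proposition~\ref{prop130619ad}, we have natural isomorphisms $X \simeq \RG aX$ and $Y \simeq \RG aY$, so it suffices to establish $\RG aX \simeq \RG aY$.

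Choose a morphism in $\catd(R)$ realizing the given isomorphism $\LL aX \simeq \LL aY$ and apply the functor $\RG a$ to obtain $\RG a\LL aX \simeq \RG a\LL aY$. For every $Z \in \catd(R)$, the natural morphism $\RG aZ \to \RG a\LL aZ$ induced by the completion map $Z \to \LL aZ$ is an isomorphism; this is one of the standard conclusions of derived Greenlees-May duality~\cite{lipman:lhcs}. Combining these observations yields
\[
\RG aX \simeq \RG a\LL aX \simeq \RG a\LL aY \simeq \RG aY,
\]
and chaining with the isomorphisms $X \simeq \RG aX$ and $Y \simeq \RG aY$ from the previous paragraph gives $X \simeq Y$.

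The main obstacle is verifying that $\RG aZ \to \RG a\LL aZ$ is an isomorphism. Within the paper's framework, one unwinds this using Fact~\ref{fact130619b'} to rewrite
\[
\RG a\LL aZ \simeq \Lotimes{\RG aR}{\Rhom{\RG aR}{Z}},
\]
and then applies a tensor-evaluation argument, permitted thanks to the finite projective dimension of $\RG aR$ noted in Fact~\ref{fact130619b}, together with the idempotence $\RG a\RG aZ \simeq \RG aZ$ (from the Corollary to~\cite[Theorem~(0.3)$^{\ast}$]{lipman:lhcs}) already invoked in the proof of Proposition~\ref{prop130619ad}. Everything else in the argument is a formal chase.
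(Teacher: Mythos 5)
Your proof is correct and follows essentially the same route as the paper: both invoke Proposition~\ref{prop130619ad} (via the support hypotheses) for $X\simeq\RG aX$ and $Y\simeq\RG aY$, and both use the MGM/Greenlees--May isomorphism $\RG aZ\simeq\RG a\LL aZ$ from the Corollary to~\cite[Theorem~(0.3)*]{lipman:lhcs} to chain $\RG aX\simeq\RG a\LL aX\simeq\RG a\LL aY\simeq\RG aY$. Your closing paragraph sketching a tensor-evaluation verification of $\RG aZ\to\RG a\LL aZ$ is unnecessary given the direct citation (and is shaky as stated, since $\RG aR\notin\catd^{\mathrm f}(R)$, so the usual hypotheses for tensor evaluation do not obviously apply); the paper simply cites part (iv) of that Corollary.
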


\begin{proof}
By Proposition~\ref{prop130619ad}, the support conditions on $X$ and $Y$ explain the first and last isomorphisms in the next sequence:
$$X\simeq \RG aX\simeq\RG a{\LL aX}\simeq\RG a{\LL aY}\simeq \RG aY\simeq Y.$$
The second and fourth ones are by part (iv) of the Corollary to~\cite[Theorem~(0.3)*]{lipman:lhcs}.
The third one is by assumption.
\end{proof}

The next  results are versions of~\ref{lem130318aqq}--\ref{lem130805a} for co-support, with similar proofs. 
Note that Theorem~\ref{thm130318aq} contains the rest of Theorem~\ref{thm130318ax} from the introduction.

\begin{lem}\label{lem130318aq}
Let $X,M\in\catd(R)$  with $\cosupp_R(X)\subseteq\supp_R(M)$.
Then  the following conditions are equivalent:
\begin{enumerate}[\rm(i)]
\item \label{lem130318aqi}
$X\simeq 0$;
\item \label{lem130318aqii}
$\Rhom MX\simeq 0$;
\item \label{lem130318aqiii}
$\Lotimes MX\simeq 0$.
\end{enumerate}
\end{lem}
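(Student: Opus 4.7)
The plan is to mirror the proof of Lemma~\ref{lem130318aqq}, swapping the roles played by small support and small co-support, and invoking the co-support analogues of the facts cited there. Since \eqref{lem130318aqi} trivially implies \eqref{lem130318aqii} and \eqref{lem130318aqiii}, I would focus on the two reverse implications.

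For \eqref{lem130318aqii}$\implies$\eqref{lem130318aqi}, I would combine Proposition~\ref{cor130602a} with the hypothesis $\cosupp_R(X)\subseteq\supp_R(M)$ to compute
$$\cosupp_R(\Rhom MX) = \supp_R(M)\bigcap\cosupp_R(X) = \cosupp_R(X).$$
Proposition~\ref{prop140111e}\eqref{prop140111e1} then yields $\Rhom MX\simeq 0$ iff $\cosupp_R(\Rhom MX)=\emptyset$ iff $\cosupp_R(X)=\emptyset$ iff $X\simeq 0$.

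For \eqref{lem130318aqiii}$\implies$\eqref{lem130318aqi}, I would argue by contraposition: assume $X\not\simeq 0$, so $\cosupp_R(X)\neq\emptyset$ by Proposition~\ref{prop140111e}\eqref{prop140111e1}, and pick a maximal element $\p$ of $\cosupp_R(X)$ (which exists since $R$ is noetherian). By Proposition~\ref{prop140111e}\eqref{prop140111e2}, $\p$ is also maximal in $\supp_R(X)$, so in particular $\p\in\supp_R(X)$. The hypothesis gives $\p\in\supp_R(M)$, and Proposition~\ref{fact130611a} then produces
$$\p\in\supp_R(M)\bigcap\supp_R(X) = \supp_R(\Lotimes MX),$$
whence $\Lotimes MX\not\simeq 0$ by Fact~\ref{fact140109a}.

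The main obstacle is really bookkeeping: one has to flip from an argument about a maximal element in co-support to one about a maximal element in support at precisely the right moment, via Proposition~\ref{prop140111e}\eqref{prop140111e2}. Once the co-support counterparts of Propositions~\ref{fact130611a} and~\ref{prop140111e} are in place, the proof is essentially formal and parallels that of Lemma~\ref{lem130318aqq} very closely.
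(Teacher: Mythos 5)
Your proposal is correct and matches the approach the paper intends: the paper simply remarks that Lemma~\ref{lem130318aq} is proved like Lemma~\ref{lem130318aqq} with the roles of small support and co-support interchanged, which is exactly what you do. The implication (ii)$\implies$(i) uses Proposition~\ref{cor130602a} to show $\cosupp_R(\Rhom MX)=\cosupp_R(X)$ and then Proposition~\ref{prop140111e}\eqref{prop140111e1}, mirroring the paper's (iii)$\implies$(i) in Lemma~\ref{lem130318aqq}; and your (iii)$\implies$(i), which picks a maximal element of $\cosupp_R(X)$ and transfers it into $\supp_R(X)$ via Proposition~\ref{prop140111e}\eqref{prop140111e2}, mirrors the paper's (ii)$\implies$(i) there. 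One tiny expository point: in your (iii)$\implies$(i) the hypothesis yields $\p\in\supp_R(M)$ because $\p\in\cosupp_R(X)$ (not because $\p\in\supp_R(X)$), so the sentence order slightly obscures the logic, but the argument itself is sound.
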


\begin{thm}\label{thm130318aq}
Let $M\in\catd(R)$, and let
$f\colon Y \to Z$ be morphism in $\catd(R)$ with $\cosupp_R(Y), \cosupp_R(Z) \subseteq \supp_R(M)$.
The following conditions are equivalent:
\begin{enumerate}[\rm(i)]
\item \label{thm130318aqi}
$f$ is an isomorphism in $\catd(R)$;
\item \label{thm130318aqii}
$\Rhom{M}{f}$ is an isomorphism in $\catd(R)$;
\item \label{thm130318aqiii}
$\Lotimes{M}{f}$ is an isomorphism in $\catd(R)$.
\end{enumerate}
\end{thm}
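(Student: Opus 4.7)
The plan is to mirror the proof of Theorem~\ref{thm130318aqq}, substituting Lemma~\ref{lem130318aq} for Lemma~\ref{lem130318aqq} and using the cosupport analogue of the triangle inequality.

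First, I would embed $f$ in a distinguished triangle
\begin{equation*}
Y \xra{f} Z \to X \to \shift Y
\end{equation*}
in $\catd(R)$. Since $f$ is an isomorphism if and only if $X\simeq 0$, and applying the (bi)functors $\Lotimes M{-}$ and $\Rhom M{-}$ yields the induced triangles
\begin{equation*}
\Lotimes MY\xra{\Lotimes Mf}\Lotimes MZ\to\Lotimes MX\to
\qquad
\Rhom MY\xra{\Rhom Mf}\Rhom MZ\to\Rhom MX\to,
\end{equation*}
the reduction is to prove that the conditions $X\simeq 0$, $\Lotimes MX\simeq 0$, and $\Rhom MX\simeq 0$ are mutually equivalent.

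Next, I would check that $\cosupp_R(X)\subseteq\supp_R(M)$, so that Lemma~\ref{lem130318aq} applies directly to $X$. Rotating the triangle above gives the distinguished triangle $Z\to X\to \shift Y\to$, and Proposition~\ref{prop140710f} then yields
\begin{equation*}
\cosupp_R(X)\subseteq\cosupp_R(Z)\bigcup\cosupp_R(\shift Y)=\cosupp_R(Y)\bigcup\cosupp_R(Z)\subseteq\supp_R(M),
\end{equation*}
where the middle equality uses that shifting does not affect cosupport, and the final containment is the hypothesis on $f$.

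With this containment verified, Lemma~\ref{lem130318aq} provides the equivalence of $X\simeq 0$, $\Lotimes MX\simeq 0$, and $\Rhom MX\simeq 0$, which by the first paragraph finishes the argument. There is no serious obstacle here: the proof is essentially a routine transcription of the argument for Theorem~\ref{thm130318aqq}, with the one subtle point being that the cosupport version of the triangle inequality (Proposition~\ref{prop140710f}) must be invoked for the cone $X$ rather than for $Y$, since in the cosupport setup the roles of the ``first two'' and ``third'' objects play out identically only up to rotation.
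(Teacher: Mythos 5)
Your proof is correct and matches the paper's approach: the paper simply remarks that Theorems~\ref{lem130318aq}--\ref{thm130318aq} are ``versions of~\ref{lem130318aqq}--\ref{lem130805a} for co-support, with similar proofs,'' and your argument is exactly the intended translation of the proof of Theorem~\ref{thm130318aqq}, with Lemma~\ref{lem130318aq} replacing Lemma~\ref{lem130318aqq} and Proposition~\ref{prop140710f} replacing Proposition~\ref{prop140710g}. One small remark: the rotation of the triangle to place the cone $X$ in the middle slot is equally needed in the support version (Proposition~\ref{prop140710g} is stated for the middle object just as Proposition~\ref{prop140710f} is), so this is not a peculiarity of the cosupport setting as your closing sentence suggests, but this does not affect the validity of the argument.
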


\begin{cor}\label{thm130318aqc}
Let 
$f\colon Y \to Z$ be morphism in $\catd(R)$, 
and let $K$ be the Koszul complex on a generating sequence for $\fa$.
Assume that $\cosupp_R(Y), \cosupp_R(Z) \subseteq\VE(\fa)$.
Then  the following conditions are equivalent:
\begin{enumerate}[\rm(i)]
\item \label{thm130318aqci}
$f$ is an isomorphism in $\catd(R)$;
\item \label{thm130318aqcii}
$\Rhom{K}{f}$ is an isomorphism in $\catd(R)$;
\item \label{thm130318aqciii}
$\Rhom{R/\fa}{f}$ is an isomorphism in $\catd(R)$;
\item \label{thm130318aqciiii}
$\LL{a}{f}$ is an isomorphism in $\catd(R)$;
\item \label{thm130318aqciiz}
$\Lotimes{K}{f}$ is an isomorphism in $\catd(R)$;
\item \label{thm130318aqciiiz}
$\Lotimes{R/\fa}{f}$ is an isomorphism in $\catd(R)$;
\item \label{thm130318aqciiiiz}
$\RG{a}{f}$ is an isomorphism in $\catd(R)$.
\end{enumerate}
\end{cor}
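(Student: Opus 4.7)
The plan is to imitate the proof of Corollary~\ref{thm130318aqqc} almost verbatim, invoking Theorem~\ref{thm130318aq} in place of Theorem~\ref{thm130318aqq}. The first step is to observe that each of the complexes $K$, $R/\fa$, and $\RG aR$ has the same small support, namely $\VE(\fa)$: for $K$ and $R/\fa$ this is Fact~\ref{fact140109a}, and for $\RG aR$ this is Proposition~\ref{lem130611a}. Hence the hypothesis $\cosupp_R(Y),\cosupp_R(Z)\subseteq \VE(\fa)$ is exactly the hypothesis $\cosupp_R(Y),\cosupp_R(Z)\subseteq \supp_R(M)$ of Theorem~\ref{thm130318aq} for each choice $M\in\{K,\, R/\fa,\, \RG aR\}$.

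Next, I would apply Theorem~\ref{thm130318aq} three times. Taking $M=K$ yields \eqref{thm130318aqci}$\iff$\eqref{thm130318aqcii}$\iff$\eqref{thm130318aqciiz}; taking $M=R/\fa$ yields \eqref{thm130318aqci}$\iff$\eqref{thm130318aqciii}$\iff$\eqref{thm130318aqciiiz}; and taking $M=\RG aR$ yields \eqref{thm130318aqci}$\iff$\Rhom{\RG aR}{f}$ is an iso$\iff$\Lotimes{\RG aR}{f}$ is an iso. The final step is to translate the last two conditions via Fact~\ref{fact130619b'}, which provides natural isomorphisms $\LL a-\simeq\Rhom{\RG aR}{-}$ and $\RG a-\simeq\Lotimes{\RG aR}{-}$, so that $\Rhom{\RG aR}{f}$ is an isomorphism precisely when $\LL af$ is, and $\Lotimes{\RG aR}{f}$ is an isomorphism precisely when $\RG af$ is. This delivers the equivalences with \eqref{thm130318aqciiii} and \eqref{thm130318aqciiiiz}, completing the proof.

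There is no real obstacle here: the entire content is the bookkeeping of the three choices of $M$ together with the identifications of $\LL a-$ and $\RG a-$ via the support-theoretic properties of $\RG aR$. The only point to be careful about is that one must verify $\supp_R(\RG aR)=\VE(\fa)$ (as opposed to merely being contained in $\VE(\fa)$), which is exactly the content of Proposition~\ref{lem130611a}; this ensures that the hypothesis of Theorem~\ref{thm130318aq} is satisfied with $M=\RG aR$ and thus that conditions~\eqref{thm130318aqciiii} and~\eqref{thm130318aqciiiiz} really are equivalent to \eqref{thm130318aqci}.
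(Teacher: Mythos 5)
Your proof is correct and follows exactly the approach the paper intends: the paper explicitly states that Corollary~\ref{thm130318aqc} and its neighbors are ``versions of~\ref{lem130318aqq}--\ref{lem130805a} for co-support, with similar proofs,'' and your argument is precisely the co-support analogue of the proof given for Corollary~\ref{thm130318aqqc}, substituting Theorem~\ref{thm130318aq} for Theorem~\ref{thm130318aqq}. Your observation about needing the equality $\supp_R(\RG aR)=\VE(\fa)$ rather than a mere containment is a sound point of care, already supplied by Proposition~\ref{lem130611a}.
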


Our next result recovers part of~\cite[Corollary 4.8]{benson:csc}.
For perspective, note that if $X$ is a finitely generated $R$-module, then the natural morphism $X\to\LL a X$
is an isomorphism in $\catd(R)$ if and only if $X$ is $\fa$-adically complete. 
See, e.g., \cite{dwyer:cmtm, yekutieli:ccc, yekutieli:hct, yekutieli:sccmc} for more on these complexes.

\begin{prop}\label{prop130619ae}
Let $X\in\catd(R)$.
Then one has $\cosupp_R(X)\subseteq V(\fa)$
if and only if the natural morphism $X\to\LL a X$
is an isomorphism in $\catd(R)$.
\end{prop}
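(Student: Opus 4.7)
The plan is to mirror the proof of Proposition~\ref{prop130619ad}, swapping the roles of small support and co-support, of $\RG a-$ and $\LL a-$, and of Theorem~\ref{thm130318aqq} (with its Corollary~\ref{thm130318aqqc}) with Theorem~\ref{thm130318aq} (with its Corollary~\ref{thm130318aqc}). The converse direction is the easy one: if the natural morphism $\eta\colon X\to\LL aX$ is an isomorphism, then $\cosupp_R(X)=\cosupp_R(\LL aX)=\cosupp_R(X)\cap V(\fa)\subseteq V(\fa)$ by Proposition~\ref{lem130611ax}.

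For the forward direction, assume $\cosupp_R(X)\subseteq V(\fa)$. The essential input is the idempotency of derived completion, namely that the induced morphism $\LL a\eta\colon\LL aX\to\LL a(\LL aX)$ is an isomorphism in $\catd(R)$; this is the dual counterpart of the statement used in the proof of Proposition~\ref{prop130619ad}, and comes from part~(iv) of the Corollary to~\cite[Theorem~(0.3)*]{lipman:lhcs}. Granted this, I would then verify that both $\cosupp_R(X)$ and $\cosupp_R(\LL aX)$ lie in $V(\fa)$: the first by hypothesis, the second by Proposition~\ref{lem130611ax}.

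With these co-support conditions in hand, I would apply Corollary~\ref{thm130318aqc} to the morphism $\eta\colon X\to\LL aX$: since $\LL a\eta$ is an isomorphism and both the source and target of $\eta$ have co-support contained in $V(\fa)$, the equivalence of conditions~(i) and~(iv) in that corollary yields that $\eta$ itself is an isomorphism in $\catd(R)$.

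The principal obstacle, as with Proposition~\ref{prop130619ad}, is simply verifying that the external result on idempotency of $\mathbf{L}\Lambda^{\fa}$ is correctly invoked and that the support/co-support bookkeeping aligns with the hypotheses of Corollary~\ref{thm130318aqc}. Everything else is formal once these are in place; no new computations of (co-)support are needed beyond those already established in Sections~\ref{sec130805b} and~\ref{sec140110a}.
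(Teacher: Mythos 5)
Your proposal is correct and mirrors exactly what the paper intends: the paper states that Propositions \ref{lem130318aq}--\ref{lem130805aq} are ``versions of~\ref{lem130318aqq}--\ref{lem130805a} for co-support, with similar proofs,'' and your argument is precisely the co-support analogue of the proof given for Proposition~\ref{prop130619ad}, replacing $\RG a-$ by $\LL a-$, Proposition~\ref{lem130611a} by Proposition~\ref{lem130611ax}, and the support-based detection result by the co-support-based one. One small citation slip: the idempotency of $\LL a-$ (i.e., that $\LL a\eta$ is an isomorphism) is part~(ii), not part~(iv), of the Corollary to~\cite[Theorem~(0.3)*]{lipman:lhcs}; part~(iv) is the mixed statement $\RG a\circ\LL a\simeq\RG a$, which the paper invokes in the proof of Proposition~\ref{lem130805a}. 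Either part would in fact suffice here, since Corollary~\ref{thm130318aqc} lets you conclude from either $\LL a\eta$ or $\RG a\eta$ being an isomorphism, but the natural mirror of the paper's argument uses part~(ii).
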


\begin{prop}\label{lem130805aq}
Let $X,Y\in\catd(R)$ with $\cosupp_R(X),\cosupp_R(Y)\subseteq V(\fa)$.
If $\RG aX\simeq\RG a Y$ in $\catd(R)$, then $X\simeq Y$.
\end{prop}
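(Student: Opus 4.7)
The plan is to mirror the proof of Proposition~\ref{lem130805a}, but in the dual setting using local homology in place of local cohomology. The key observation is that Proposition~\ref{prop130619ae} provides the completion analogue of Proposition~\ref{prop130619ad}: under the hypothesis $\cosupp_R(X),\cosupp_R(Y)\subseteq V(\fa)$, the natural morphisms $X\to\LL aX$ and $Y\to\LL aY$ are isomorphisms in $\catd(R)$.

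First I would note that a version of MGM duality supplies a natural isomorphism $\LL a\RG a(-)\simeq\LL a(-)$ on $\catd(R)$; this is exactly the ``dual'' to part~(iv) of the Corollary to~\cite[Theorem~(0.3)*]{lipman:lhcs} used in the proof of Proposition~\ref{lem130805a}. With that in hand, I would simply chain the following isomorphisms:
$$X\simeq \LL aX\simeq\LL a\RG aX\simeq\LL a\RG aY\simeq \LL aY\simeq Y.$$
The first and last isomorphisms use Proposition~\ref{prop130619ae} together with the co-support hypotheses on $X$ and $Y$. The second and fourth use the MGM isomorphism $\LL a\RG a\simeq\LL a$. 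The middle isomorphism is obtained by applying the functor $\LL a$ to the given isomorphism $\RG aX\simeq\RG aY$.

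The only potential obstacle is pinning down the precise reference for $\LL a\RG a\simeq\LL a$, since the earlier proof (Proposition~\ref{lem130805a}) cites the contravariantly dual statement $\RG a\LL a\simeq\RG a$. Both fit into Lipman's general framework, so this should be a direct citation to the same corollary of~\cite[Theorem~(0.3)*]{lipman:lhcs}. Once that identification is made, the argument is a formal three-line chain and nothing else needs to be checked.
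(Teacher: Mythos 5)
Your proposal is correct and matches precisely the proof the paper intends (the text at the start of that block says these co-support statements have ``similar proofs'' to their support analogues). The isomorphism $\LL a\RG a\simeq\LL a$ is indeed one of the MGM identities established in the same Corollary to \cite[Theorem~(0.3)*]{lipman:lhcs} that furnishes $\RG a\LL a\simeq\RG a$, so the citation concern you flag resolves without difficulty.
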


We conclude this section with other versions of~\ref{lem130318aqq}--\ref{thm130318aqq}, with similar proofs. 

\begin{lem}\label{lem130318aqm}
Let $X,M\in\catd(R)$  such that either $\supp_R(X)\subseteq\cosupp_R(M)$ or $\cosupp_R(X)\subseteq\cosupp_R(M)$.
Then 
$X\simeq 0$
if and only if $\Rhom XM\simeq 0$.
\end{lem}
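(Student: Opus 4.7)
The plan is to reduce the lemma to a statement about the emptiness of a co-support set, using Proposition~\ref{cor130602a}, which gives
$$\cosupp_R(\Rhom{X}{M})=\supp_R(X)\bigcap\cosupp_R(M),$$
together with Proposition~\ref{prop140111e}\eqref{prop140111e1}, which says that a complex in $\catd(R)$ vanishes if and only if its co-support is empty. Once these two are combined, the lemma becomes: assuming $X\not\simeq 0$, produce a prime $\p\in\supp_R(X)\cap\cosupp_R(M)$.

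First I would dispatch the forward implication, which is immediate. Then I would split into the two hypothesis cases. In the case $\supp_R(X)\subseteq\cosupp_R(M)$, the non-vanishing of $X$ yields $\supp_R(X)\neq\emptyset$ by Fact~\ref{fact140109a}, so any prime in $\supp_R(X)$ does the job. In the case $\cosupp_R(X)\subseteq\cosupp_R(M)$, the non-vanishing of $X$ yields $\cosupp_R(X)\neq\emptyset$ by Proposition~\ref{prop140111e}\eqref{prop140111e1}; since $R$ is noetherian, I would choose a maximal element $\p$ of $\cosupp_R(X)$ and then invoke Proposition~\ref{prop140111e}\eqref{prop140111e2} to conclude that $\p$ is also maximal in (and therefore lies in) $\supp_R(X)$. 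Membership in $\cosupp_R(M)$ is then immediate from the hypothesis.

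The only genuinely substantive step is the second case, where the hypothesis controls only the co-support of $X$, but Proposition~\ref{cor130602a} demands something about its support. Bridging the two requires the non-trivial coincidence of maximal elements from Proposition~\ref{prop140111e}\eqref{prop140111e2}; without that, one would be stuck. Everything else is formal bookkeeping of (co-)support sets along the lines of the parallel arguments in Lemmas~\ref{lem130318aqq} and~\ref{lem130318aq}.
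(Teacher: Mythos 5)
Your proof is correct, and it follows the same route the paper indicates: reduce via Proposition~\ref{cor130602a} to the non-emptiness of $\supp_R(X)\cap\cosupp_R(M)$, handle the first hypothesis case directly from $\supp_R(X)\neq\emptyset$, and in the second case pass through the coincidence of maximal elements in Proposition~\ref{prop140111e}\eqref{prop140111e2} to bridge co-support and support. The paper states only that the proof is "similar" to Lemma~\ref{lem130318aqq}, and your argument is exactly that adaptation.
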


\begin{thm}\label{thm130318aqm}
Let $M\in\catd(R)$, and let
$f\colon Y \to Z$ be morphism in $\catd(R)$ such that either $\supp_R(Y), \supp_R(Z) \subseteq \cosupp_R(M)$
or $\cosupp_R(Y), \cosupp_R(Z) \subseteq \cosupp_R(M)$.
Then
$f$ is an isomorphism in $\catd(R)$
if and only if
$\Rhom f{M}$ is an isomorphism in $\catd(R)$.
\end{thm}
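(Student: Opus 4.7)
The plan is to follow the same cone-based strategy used in the proofs of Theorems~\ref{thm130318aqq} and~\ref{thm130318aq}, exploiting Lemma~\ref{lem130318aqm} as the key input. First, complete $f$ to a distinguished triangle
$$Y\xra{f}Z\to X\to$$
in $\catd(R)$. A suitable rotation of this triangle together with Proposition~\ref{prop140710g} yields $\supp_R(X)\subseteq\supp_R(Y)\bigcup\supp_R(Z)$; similarly, Proposition~\ref{prop140710f} gives $\cosupp_R(X)\subseteq\cosupp_R(Y)\bigcup\cosupp_R(Z)$. In either of the two hypothesized cases, this delivers the support or co-support inclusion needed to apply Lemma~\ref{lem130318aqm} to $X$ relative to $M$: either $\supp_R(X)\subseteq\cosupp_R(M)$ or $\cosupp_R(X)\subseteq\cosupp_R(M)$.

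Next, apply the contravariant functor $\Rhom{-}{M}$ to the triangle above; the result is a distinguished triangle
$$\Rhom{X}{M}\to\Rhom{Z}{M}\xra{\Rhom{f}{M}}\Rhom{Y}{M}\to$$
in $\catd(R)$ in which $\Rhom{f}{M}$ appears as one of the structure morphisms. By the standard fact that a morphism in a triangulated category is an isomorphism if and only if its cone is zero, we have $f\simeq$ iso in $\catd(R)$ iff $X\simeq 0$, and $\Rhom{f}{M}\simeq$ iso in $\catd(R)$ iff $\Rhom{X}{M}\simeq 0$.

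Finally, Lemma~\ref{lem130318aqm}, applied under whichever of the two support inclusions was obtained above, gives the equivalence $X\simeq 0\iff\Rhom{X}{M}\simeq 0$. Chaining these biconditionals yields the desired statement. There is no real obstacle here; the only point requiring a moment's care is observing that Proposition~\ref{prop140710g} bounds the support of the middle term of a triangle in terms of the outer two, so one must rotate the cone triangle in order to bound $\supp_R(X)$ (resp.\ $\cosupp_R(X)$) by $\supp_R(Y)\cup\supp_R(Z)$ (resp.\ $\cosupp_R(Y)\cup\cosupp_R(Z)$), which is how one reduces the two hypothesized cases to the hypothesis of Lemma~\ref{lem130318aqm}.
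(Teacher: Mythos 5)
Your proof is correct and follows exactly the approach the paper indicates: complete $f$ to a triangle, use the rotation observation with Proposition~\ref{prop140710g} (or~\ref{prop140710f}) to bound the (co-)support of the cone, reduce to Lemma~\ref{lem130318aqm} applied to the cone, and conclude via the standard ``iso iff cone is zero'' criterion applied to both $f$ and $\Rhom fM$. The paper itself gives no separate argument for Theorem~\ref{thm130318aqm} beyond saying the proof is similar to Theorem~\ref{thm130318aqq}, and your write-up is a faithful (and slightly more careful, in its explicit mention of rotating the triangle) rendering of that template.
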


\section{Some co-support Computations}\label{sec140710b}

Since co-support is (to us) somewhat mysterious, we devote this section to some computations.
(See also the discussion at the end of~\cite[Section 4]{benson:csc}.)
We begin with the co-support of Matlis duals.
Recall that an injective $R$-module $E$ is \emph{faithfully injective}
if for all $R$-modules $M$ one has $M=0$ if and only if $\Hom ME=0$.
For example, the direct sum $\bigoplus_\m E_R(R/\m)$ is faithfully injective,
where the sum is indexed over all maximal ideals $\m$ of $R$.

\begin{prop}\label{prop140118a}
If $E$ is a faithfully injective $R$-module and  $X\in\catd(R)$,
then  $\cosupp_R(E)=\spec(R)$ and
$\cosupp_R(\Rhom XE)=\supp_R(X)$.
\end{prop}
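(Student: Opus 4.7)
The plan is to handle the two claims separately, using Proposition~\ref{cor130602a} to reduce the second to the first.

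For $\cosupp_R(E) = \spec(R)$, I would fix an arbitrary $\p \in \spec(R)$ and show $\Rhom{\kappa(\p)}{E} \not\simeq 0$. Since $E$ is an injective $R$-module, it is its own semiinjective resolution (viewed as a complex concentrated in degree zero), so $\Rhom{\kappa(\p)}{E} \simeq \Hom{\kappa(\p)}{E}$ in $\catd(R)$. Because $\kappa(\p) \neq 0$, the faithful injectivity of $E$ gives $\Hom{\kappa(\p)}{E} \neq 0$, so $\HH_0(\Rhom{\kappa(\p)}{E}) \neq 0$ and thus $\Rhom{\kappa(\p)}{E} \not\simeq 0$. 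This puts $\p$ in $\cosupp_R(E)$; since $\p$ was arbitrary, we conclude $\cosupp_R(E) = \spec(R)$.

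For the second equality, I would simply invoke Proposition~\ref{cor130602a} with $Y = E$:
\[
\cosupp_R(\Rhom{X}{E}) = \supp_R(X) \cap \cosupp_R(E) = \supp_R(X) \cap \spec(R) = \supp_R(X).
\]
There is no real obstacle here; the only slightly subtle point is recognizing that ``injective as a module'' already implies ``semiinjective as a complex'' so that $\Rhom{-}{E}$ can be computed without replacing $E$, which is what makes the faithful-injectivity hypothesis translate cleanly into non-vanishing of $\Rhom{\kappa(\p)}{E}$.
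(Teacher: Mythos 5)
Your proof is correct and takes essentially the same route as the paper's: the paper asserts directly that faithful injectivity gives $\Rhom{\kappa(\p)}{E}\not\simeq 0$ for every $\p$, and then invokes Proposition~\ref{cor130602a} exactly as you do. Your added remark that an injective module is already semiinjective as a complex (so $\Rhom{\kappa(\p)}{E}$ is just $\Hom{\kappa(\p)}{E}$) is precisely the unstated justification behind the paper's one-line assertion.
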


\begin{proof}
As $E$ is faithfully injective, for all $\p\in\spec(R)$, we have
$\Rhom{\kappa(\p)}{E}\not\simeq 0$.
Thus, the conclusion $\cosupp_R(E)=\spec(R)$ follows by definition.
Because of this, Proposition~\ref{cor130602a} explains the computation of
$\operatorname{co-supp}_{R}(\Rhom{X}{E})$.
\end{proof}

The next three results extend the first half of the previous one. Note that every injective $R$-module decomposes
uniquely into the  form given in Proposition~\ref{prop140710e}.

\begin{lem}\label{lem140710b}
Given a set $\{J(i)\}_{i\in\Lambda}$ of injective $R$-modules,
one has 
$$\cosupp_R\left(\coprod_iJ(i)\right)=\bigcup_i\cosupp_R(J(i)).$$
\end{lem}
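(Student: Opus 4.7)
The plan is to leverage the fact that over a noetherian ring an arbitrary direct sum of injective modules is again injective (Bass--Papp), so that the derived Hom $\Rhom{\kappa(\p)}{-}$ collapses to ordinary Hom on both sides of the desired equality, after which the statement follows from a purely formal projection argument.

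The inclusion $\bigcup_i\cosupp_R(J(i))\subseteq\cosupp_R(\coprod_iJ(i))$ is already supplied by Proposition~\ref{prop140710b}, so the remaining task is to establish the reverse inclusion. I will do this contrapositively: fix $\p\in\spec(R)$ and assume that $\p\notin\cosupp_R(J(i))$ for every $i$. Since each $J(i)$ is injective, this hypothesis reads $\Hom{\kappa(\p)}{J(i)}=0$, and because $R$ is noetherian the coproduct $\coprod_iJ(i)$ is likewise injective, so proving $\p\notin\cosupp_R(\coprod_iJ(i))$ reduces to showing $\Hom{\kappa(\p)}{\coprod_iJ(i)}=0$.

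This last vanishing is a formality: any $R$-linear map $f\colon\kappa(\p)\to\coprod_iJ(i)$ yields maps $\pi_j\circ f\in\Hom{\kappa(\p)}{J(j)}=0$ upon composing with each coordinate projection $\pi_j$, so every coordinate of $f(x)$ is zero for every $x\in\kappa(\p)$, forcing $f=0$. The only non-formal ingredient is the Bass--Papp theorem used to retain injectivity under passing to the coproduct; beyond this, the argument is a direct consequence of the universal property of coproducts in $R$-Mod together with Proposition~\ref{prop140710b}.
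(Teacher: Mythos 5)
Your proof is correct. Both you and the paper hinge on the Bass--Papp theorem (over a noetherian ring, an arbitrary coproduct of injectives is injective), but you deploy it differently. The paper's argument notices that the injective module $\coprod_iJ(i)$ splits off as a direct summand of $\prod_iJ(i)$, and then quotes Proposition~\ref{prop140710b} directly: $\cosupp_R\bigl(\coprod_iJ(i)\bigr)\subseteq\cosupp_R\bigl(\prod_iJ(i)\bigr)=\bigcup_i\cosupp_R(J(i))$. You instead use the injectivity of the coproduct to collapse $\Rhom{\kappa(\p)}{-}$ to plain $\Hom{\kappa(\p)}{-}$, and then argue by hand that a map $\kappa(\p)\to\coprod_iJ(i)$ vanishes once all coordinate compositions vanish. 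Your route is slightly more hands-on, essentially re-proving in this special case the product-compatibility that Proposition~\ref{prop140710b} already records; the paper's route is more economical because it reuses that proposition and the standard ``injective submodule splits off'' fact. Both arguments are valid, and the core insight (you cannot avoid noetherianity, since Bass--Papp is exactly what makes coproducts of injectives behave) is the same.
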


\begin{proof}
The containment $\cosupp_R\left(\coprod_iJ(i)\right)\supseteq\bigcup_i\cosupp_R(J(i))$
is from Proposition~\ref{prop140710b}. For the reverse containment, 
note that the injective module $\coprod_iJ(i)$ is a summand of $\prod_iJ(i)$. Hence, we also have
$$\cosupp_R\left(\coprod_iJ(i)\right)\subseteq\cosupp_R\left(\prod_iJ(i)\right)=\bigcup_i\cosupp_R(J(i))$$
as desired.
\end{proof}

\begin{prop}\label{prop140710d}
Given a prime $\p\in\spec(R)$, one has
$$\cosupp_R(E_R(R/\p))=\{\q\in\spec(R)\mid\q\subseteq\p\}.$$
\end{prop}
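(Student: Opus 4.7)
The plan is to exploit the fact that $E_R(R/\p)$ is an injective $R$-module, so the derived Hom collapses to the ordinary Hom: a prime $\q$ lies in $\cosupp_R(E_R(R/\p))$ if and only if $\Hom{\kappa(\q)}{E_R(R/\p)}\neq 0$. So the task reduces to computing this ordinary Hom for each $\q\in\spec(R)$.

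For the containment $\cosupp_R(E_R(R/\p))\subseteq\{\q\mid\q\subseteq\p\}$, I would argue contrapositively. Suppose $\q\not\subseteq\p$, and choose $s\in\q\ssm\p$. Then $s$ acts as zero on $\kappa(\q)$, while $s$ acts invertibly on $E_R(R/\p)$ (since $E_R(R/\p)$ is naturally an $R_{\p}$-module, every element of $R\ssm\p$ acts invertibly on it). Given any $R$-linear map $\vf\colon\kappa(\q)\to E_R(R/\p)$ and any $x\in\kappa(\q)$, from $s\vf(x)=\vf(sx)=0$ and the invertibility of $s$ on $E_R(R/\p)$ we deduce $\vf(x)=0$. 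Hence $\Hom{\kappa(\q)}{E_R(R/\p)}=0$, so $\q\notin\cosupp_R(E_R(R/\p))$.

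For the reverse containment, suppose $\q\subseteq\p$. The inclusion $R\ssm\p\subseteq R\ssm\q$ shows that the elements of $R\ssm\p$ are already units in $\kappa(\q)$, so $\kappa(\q)_{\p}=\kappa(\q)$ as $R_{\p}$-modules. Using the adjunction that holds for modules over the $R_{\p}$-module $E_R(R/\p)\cong E_{R_\p}(\kappa(\p))$, I obtain
\[
\Hom{\kappa(\q)}{E_R(R/\p)}\;\cong\;\Hom[R_{\p}]{\kappa(\q)}{E_{R_{\p}}(\kappa(\p))}.
\]
The right-hand side is nonzero because $E_{R_{\p}}(\kappa(\p))$ is an injective cogenerator of the category of $R_{\p}$-modules: given any nonzero element of the nonzero $R_{\p}$-module $\kappa(\q)$, the cyclic submodule it generates surjects onto $\kappa(\p)\subseteq E_{R_{\p}}(\kappa(\p))$, and this map extends along the inclusion $\kappa(\q)\hookrightarrow$ anything by injectivity of $E_{R_{\p}}(\kappa(\p))$. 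This gives $\q\in\cosupp_R(E_R(R/\p))$ and completes the proof.

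The only mild subtlety is the identification $\kappa(\q)_{\p}=\kappa(\q)$ together with the adjunction isomorphism above; apart from that, both containments are essentially formal consequences of the injectivity of $E_R(R/\p)$ together with its $R_{\p}$-module structure and the cogenerator property at $\p$.
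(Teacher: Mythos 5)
Your proof is correct and follows essentially the same route as the paper's. For $\q\not\subseteq\p$, your argument using an element $s\in\q\ssm\p$ that acts as zero on $\kappa(\q)$ and invertibly on $E_R(R/\p)$ is a slight rewording of the paper's observation that $\Gamma_{\q}(E_R(R/\p))=0$; for $\q\subseteq\p$, your cogenerator argument and the restriction-of-scalars identification $\Hom{\kappa(\q)}{E_R(R/\p)}\cong\Hom[R_{\p}]{\kappa(\q)}{E_{R_{\p}}(\kappa(\p))}$ are exactly what the paper invokes under the labels ``faithfully injective over $R_{\p}$'' and ``a standard property of $R_{\p}$-modules.''
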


\begin{proof}
For one containment, let $\q\in\spec(R)$ such that $\q\subseteq\p$.
The natural map $R\to\kappa(\q)$ factors as the composition of the following natural maps:
$R\to R_\p\to R_\q\to\kappa(\q)$.
Note that $E_R(R/\p)=E_{R_\p}(\kappa(\p))$ is faithfully injective over $R_{\p}$, hence the first step in the next sequence:
\begin{align*}
0
&\neq\Hom[R_\p]{\kappa(\q)}{E_R(R/\p)}
=\Hom{\kappa(\q)}{E_R(R/\p)}.
\end{align*}
The second step is a standard property of $R_\p$-modules. It follows that 
we have $\q\in\cosupp_R(E_R(R/\p))$, as desired.

For the reverse containment, let $\q\in\spec(R)$ such that $\q\not\subseteq\p$.
We need to show that $\q\notin\cosupp_R(E_R(R/\p))$.
The condition $\q\not\subseteq\p$ implies that $\Gamma_{\q}(E_R(R/\p))=0$ because every element
of $\q\ssm\p$ acts as a unit on $E_R(R/\p)$.
The module $\kappa(\q)$ is $\q$-torsion, so we have
\begin{align*}
\Hom{\kappa(\q)}{E_R(R/\p)}
&\cong\Hom{\kappa(\q)}{\Gamma_{\q}(E_R(R/\p))}=0.
\end{align*}
Since $E_R(R/\p)$ is injective, this implies that $\Rhom{\kappa(\q)}{E_R(R/\p)}\simeq 0$,
so we have $\q\notin\cosupp_R(E_R(R/\p))$, as desired.
\end{proof}

\begin{prop}\label{prop140710e}
Let $I=\coprod_{\p\in\Lambda}E_R(R/\p)^{(\mu_p)}$
for some index set $\Lambda\subset\spec(R)$ and exponent sets $\mu_p\neq\emptyset$.
Then we have
$$\cosupp_R(I)=\bigcup_{\p\in\Lambda}\cosupp_R(E_R(R/\p)^{(\mu_p)})=\{\q\mid\text{$\q\subseteq\p$ for some $\p\in\Lambda$}\}.$$
\end{prop}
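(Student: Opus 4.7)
The plan is to assemble the statement directly from the three preceding results, since all the real work has been done. The first equality in the display is an immediate application of Lemma~\ref{lem140710b} to the decomposition $I=\coprod_{\p\in\Lambda}E_R(R/\p)^{(\mu_p)}$, viewing each summand $E_R(R/\p)^{(\mu_p)}$ as an injective $R$-module. This reduces the problem to computing $\cosupp_R(E_R(R/\p)^{(\mu_p)})$ for each fixed $\p\in\Lambda$.

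Next, I would reduce the exponent: since $E_R(R/\p)^{(\mu_p)}\cong\coprod_{\mu_p}E_R(R/\p)$ with $\mu_p\neq\emptyset$, another application of Lemma~\ref{lem140710b} gives
$$\cosupp_R(E_R(R/\p)^{(\mu_p)})=\bigcup_{\mu_p}\cosupp_R(E_R(R/\p))=\cosupp_R(E_R(R/\p)),$$
where the last equality uses $\mu_p\neq\emptyset$ to ensure the union is nontrivial. Now Proposition~\ref{prop140710d} identifies $\cosupp_R(E_R(R/\p))=\{\q\in\spec(R)\mid\q\subseteq\p\}$.

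Putting these pieces together yields
$$\cosupp_R(I)=\bigcup_{\p\in\Lambda}\cosupp_R(E_R(R/\p)^{(\mu_p)})=\bigcup_{\p\in\Lambda}\{\q\in\spec(R)\mid\q\subseteq\p\},$$
which is exactly the claimed set $\{\q\mid \q\subseteq\p \text{ for some } \p\in\Lambda\}$. There is no real obstacle here; the only point to watch is the hypothesis $\mu_p\neq\emptyset$, which ensures that no summand is trivially absent from the union of co-supports.
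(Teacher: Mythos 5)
Your proof is correct and follows the paper's own argument almost verbatim: both use Lemma~\ref{lem140710b} to pass to a union over $\Lambda$, then again (with $\mu_p\neq\emptyset$) to drop the exponent, and finally invoke Proposition~\ref{prop140710d}. Your version is slightly more explicit in spelling out the second application of Lemma~\ref{lem140710b}, but there is no substantive difference.
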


\begin{proof}
The desired equalities are consequences of the next sequence
\begin{align*}
\cosupp_R(I)
&=\bigcup_{\p\in\Lambda}\cosupp_R(E_R(R/\p)^{(\mu_p)})\\
&=\bigcup_{\p\in\Lambda}\cosupp_R(E_R(R/\p))\\
&=\{\q\mid\text{$\q\subseteq\p$ for some $\p\in\Lambda$}\}
\end{align*}
which follow from
Lemma~\ref{lem140710b} and Proposition~\ref{prop140710d}.
\end{proof}

For the next two results, recall that an $R$-complex $C\in\catdfb(R)$ is \emph{semidualizing}
if the natural homothety morphism $R\to\Rhom CC$ is an isomorphism in $\catd(R)$.
A \emph{dualizing} $R$-complex is a semidualizing $R$-complex of finite injective dimension.

\begin{prop}\label{prop140118b}
For a semidualizing $R$-complex $C$,
one has $\supp_R(C)=\spec(R)$ and  $\cosupp_R(C)=\cosupp_R(R)$.
\end{prop}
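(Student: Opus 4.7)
The plan is to exploit the defining isomorphism $R \simeq \Rhom{C}{C}$ of a semidualizing complex, together with the support/co-support formulas for $\Rhom$ already established in the paper.

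For the first equality $\supp_R(C) = \spec(R)$, the key observation is that since $C \in \catdfb(R)$, the complex $C$ lies in both $\catd_-(R)$ and $\catdf_+(R)$, so Proposition~\ref{lem140121a} applies with $M = X = C$ to give
\[
\supp_R(\Rhom{C}{C}) = \supp_R(C) \cap \supp_R(C) = \supp_R(C).
\]
On the other hand, $\Rhom{C}{C} \simeq R$, and clearly $\supp_R(R) = \spec(R)$ (any residue field $\kappa(\p)$ tensored with $R$ is $\kappa(\p) \not\simeq 0$). Combining these yields $\supp_R(C) = \spec(R)$.

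For the second equality $\cosupp_R(C) = \cosupp_R(R)$, I would invoke Proposition~\ref{cor130602a} applied to $\Rhom{C}{C}$:
\[
\cosupp_R(R) = \cosupp_R(\Rhom{C}{C}) = \supp_R(C) \cap \cosupp_R(C).
\]
By the first part just established, $\supp_R(C) = \spec(R)$, so the intersection collapses to $\cosupp_R(C)$, giving $\cosupp_R(R) = \cosupp_R(C)$.

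There is no real obstacle here; both computations reduce immediately to plugging the semidualizing isomorphism $R \simeq \Rhom{C}{C}$ into the intersection formulas of Propositions~\ref{lem140121a} and~\ref{cor130602a}. The only point to verify is that the hypotheses of Proposition~\ref{lem140121a} are met, which is automatic from the definition $C \in \catdfb(R)$.
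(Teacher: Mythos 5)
Your proof is correct, and the second half (for $\cosupp_R(C)=\cosupp_R(R)$) is essentially identical to the paper's argument via Proposition~\ref{cor130602a}. For the first equality you take a genuinely different route: you plug $M=X=C$ into the intersection formula of Proposition~\ref{lem140121a}, obtaining $\supp_R(R)=\supp_R(\Rhom CC)=\supp_R(C)$, and then observe $\supp_R(R)=\spec(R)$. The paper instead first invokes Fact~\ref{fact140109a} to get $\supp_R(C)=\Supp_R(C)$ (since $C\in\catd^{\mathrm f}_+(R)$), and then localizes the semidualizing isomorphism to see $R_\p\simeq\Rhom[R_\p]{C_\p}{C_\p}\not\simeq 0$, hence $C_\p\not\simeq 0$, for every $\p$. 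Your route is slightly slicker, since it reuses the same ``plug into a support formula for $\Rhom$'' template for both halves; the paper's route is a bit more elementary, avoids invoking Proposition~\ref{lem140121a}, and has the minor extra virtue of also showing directly that $\Supp_R(C)=\spec(R)$. You were right to flag that the hypotheses of Proposition~\ref{lem140121a} must be checked, and they do hold automatically for $C\in\catdfb(R)$ (which sits inside both $\catd_-(R)$ and $\catdf_+(R)$).
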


\begin{proof}
Since $C\in\catdfb(R)$, we have
$\supp_R(C)=\Supp_R(C)\subseteq\spec(R)$
by Fact~\ref{fact140109a}.
On the other hand, for each $\p\in\spec(R)$, one has
$$0\not\simeq R_{\p}\simeq\Rhom CC_{\p}\simeq\Rhom[R_{\p}]{C_{\p}}{C_{\p}}.$$
Thus, $C_{\p}\not\simeq 0$ and so $\p\in\Supp_R(C)$.

With the isomorphism $\Rhom CC\simeq R$, this implies that
\begin{align*}
\cosupp_R(C)
&=\spec(R)\bigcap\cosupp_R(C)\\
&=\supp_R(C)\bigcap\cosupp_R(C)\\
&=\cosupp_R(\Rhom CC)\\
&=\cosupp_R(R)
\end{align*}
by Proposition~\ref{cor130602a}.
\end{proof}

The next result is Theorem~\ref{prop140118dx}\eqref{prop140118dxa} from the introduction.

\begin{thm}\label{prop140118d}
If $R$ has a dualizing complex,
then   each $X\in\catdfb(R)$ has
$$\cosupp_R(X)=\supp_R(X)\bigcap\cosupp_R(R)\subseteq\supp_R(X).$$
\end{thm}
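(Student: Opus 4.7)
The plan is to exploit Matlis-style biduality against a dualizing complex $D$ for $R$, reducing the computation of $\cosupp_R(X)$ to applications of Propositions~\ref{cor130602a}, \ref{lem140121a}, and~\ref{prop140118b}. The containment $\supp_R(X)\cap\cosupp_R(R)\subseteq\supp_R(X)$ is tautological, so the real work is the first equality.

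First I would invoke biduality: since $D$ is dualizing and $X\in\catdfb(R)$, one has
$$X\simeq\Rhom{\Rhom{X}{D}}{D}.$$
Setting $Y:=\Rhom{X}{D}$, Proposition~\ref{cor130602a} yields
$$\cosupp_R(X)=\cosupp_R(\Rhom{Y}{D})=\supp_R(Y)\bigcap\cosupp_R(D).$$

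Next I would simplify each factor. For the co-support factor, Proposition~\ref{prop140118b} (applied to the semidualizing complex $D$) gives $\cosupp_R(D)=\cosupp_R(R)$. For the support factor, I would apply Proposition~\ref{lem140121a} to $\Rhom{X}{D}$: since $D$ is bounded (having finite injective dimension, so $D\in\catd_-(R)$) and $X\in\catdf_+(R)$, the hypotheses are met, and since $\supp_R(D)=\spec(R)$ by Proposition~\ref{prop140118b}, we obtain
$$\supp_R(\Rhom{X}{D})=\supp_R(X)\bigcap\supp_R(D)=\supp_R(X).$$

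Assembling these identifications gives
$$\cosupp_R(X)=\supp_R(X)\bigcap\cosupp_R(R),$$
and the displayed containment follows immediately. The potential obstacle is checking hypotheses of Proposition~\ref{lem140121a} (bounded/finite hypotheses on the two arguments), but these are clean given that $D$ has finite injective dimension and $X\in\catdfb(R)$; everything else is a direct bookkeeping argument.
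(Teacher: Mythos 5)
Your proposal is correct and follows essentially the same route as the paper's own proof: invoke Grothendieck biduality $X\simeq\Rhom{\Rhom{X}{D}}{D}$, apply Proposition~\ref{cor130602a} to reduce $\cosupp_R(X)$ to $\supp_R(\Rhom{X}{D})\cap\cosupp_R(D)$, then use Propositions~\ref{lem140121a} and~\ref{prop140118b} to identify these with $\supp_R(X)$ and $\cosupp_R(R)$ respectively. The hypothesis checks you flag for Proposition~\ref{lem140121a} (namely $D\in\catd_-(R)$ from finite injective dimension and $X\in\catdf_+(R)$ from $X\in\catdfb(R)$) are exactly what the paper relies on.
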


\begin{proof}
Let $D$ be a dualizing $R$-complex.
As we have $X\in\catdfb(R)$, Grothendieck duality implies that
$\Rhom XD\in\catdfb(R)$ and
\begin{equation}\label{eq140719a}
X\simeq\Rhom{\Rhom XD}D
\end{equation}
in $\catd(R)$.
It follows from Propositions~\ref{lem140121a} and~\ref{prop140118b} that
$$\supp_R(\Rhom XD)
=\supp_R(X)\bigcap\supp_R(D)
=\supp_R(X).
$$
With Proposition~\ref{prop140118b}, this explains the third equality in the next sequence.
\begin{align*}
\cosupp_R(X)
&=\cosupp_R(\Rhom{\Rhom XD}D)\\
&=\supp_R(\Rhom XD)\bigcap\cosupp_R(D)\\
&=\supp_R(X)\bigcap\cosupp_R(R)
\end{align*}
The other equalities are from the isomorphism~\eqref{eq140719a} and Proposition~\ref{cor130602a}.
\end{proof}

Note that Proposition~\ref{cor140118a} shows that one can have proper containment
or equality in the next result.

\begin{thm}\label{prop140118g}
For each $X\in\catdfb(R)$, one has
$\cosupp_R(X)\subseteq\supp_R(X)$.
\end{thm}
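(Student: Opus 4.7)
The plan is to prove $\cosupp_R(X)\subseteq\supp_R(X)$ by induction on the amplitude $\amp(X)$, reducing first to the case of a single finitely generated module and then building up via the standard truncation triangles.

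For the base case $\amp(X)=0$, either $X\simeq 0$ (nothing to prove) or $X\simeq\shift^n M$ in $\catd(R)$ for some finitely generated $R$-module $M$ and integer $n$. Since both $\supp_R(-)$ and $\cosupp_R(-)$ are shift-invariant, it suffices to show $\cosupp_R(M)\subseteq\supp_R(M)$, and by Fact~\ref{fact140109a} the latter equals $\VE(\ann_R M)$. So I would fix $\p\in\spec(R)$ with $\ann_R M\not\subseteq\p$ and pick $a\in\ann_R M$ with $a\notin\p$. Multiplication by $a$ on each $\Ext{i}{\kappa(\p)}{M}$ is a single $R$-linear map that can be computed from multiplication by $a$ on either argument. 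Since $aM=0$ as a chain map, this action is zero; since $a$ is a unit in $\kappa(\p)$, it is also an automorphism of each $\Ext{i}{\kappa(\p)}{M}$ (viewed as a $\kappa(\p)$-vector space via the first argument). Hence $\Ext{i}{\kappa(\p)}{M}=0$ for every $i$, so $\Rhom{\kappa(\p)}{M}\simeq 0$ and $\p\notin\cosupp_R(M)$.

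For the inductive step I would assume $\amp(X)\geq 1$, set $n=\sup(X)$, and consider the canonical truncation triangle
\begin{equation*}
\tau_{\leq n-1}X\to X\to\shift^n\HH_n(X)\to
\end{equation*}
in $\catd(R)$. Both $\tau_{\leq n-1}X$ and $\shift^n\HH_n(X)$ lie in $\catdfb(R)$ and have amplitude strictly less than $\amp(X)$, so by the induction hypothesis each has its co-support contained in its support. A routine check via Fact~\ref{fact140109a} gives
\begin{equation*}
\supp_R(\tau_{\leq n-1}X)\cup\supp_R(\shift^n\HH_n(X))\subseteq\supp_R(X).
\end{equation*}
Applying the subadditivity from Proposition~\ref{prop140710f} to the triangle then yields
\begin{equation*}
\cosupp_R(X)\subseteq\cosupp_R(\tau_{\leq n-1}X)\cup\cosupp_R(\shift^n\HH_n(X))\subseteq\supp_R(X),
\end{equation*}
completing the induction.

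The main obstacle is the base case: one genuinely needs that $M$ is annihilated by $a$ as a chain map (not merely that $a$ annihilates $\HH_*(M)$) in order to force the $a$-action on $\Rhom{\kappa(\p)}{M}$ to be null in $\catd(R)$. This is automatic when $M$ is a single module but fails in general for bounded complexes with finitely generated homology, which is precisely why the induction on amplitude is needed to bootstrap the single-module statement to all of $\catdfb(R)$.
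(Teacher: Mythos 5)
Your argument is correct, but it follows a genuinely different route from the paper's. The paper's proof observes that for $X\in\catdfb(R)$ one has $\supp_R(X)=\VE(\fb)$ where $\fb=\bigcap_j\ann_R(\HH_j(X))$, notes that each $\HH_j(X)$ is killed by $\fb$ and hence trivially $\fb$-adically complete, invokes \cite[Theorem 1.21]{yekutieli:ccc} to conclude that the natural morphism $X\to\LL bX$ is an isomorphism, and then applies Proposition~\ref{prop130619ae} to get $\cosupp_R(X)\subseteq\VE(\fb)=\supp_R(X)$. Your proof dispenses with the derived-completion machinery entirely: the base case uses only the elementary observation that for a finitely generated module $M$ and a prime $\p$ with $\ann_R(M)\not\subseteq\p$, any $a\in\ann_R(M)\ssm\p$ acts on each $\Ext{i}{\kappa(\p)}{M}$ both as zero and as an automorphism, forcing $\Rhom{\kappa(\p)}{M}\simeq 0$; the inductive step then combines soft truncation with the subadditivity of co-support over triangles from Proposition~\ref{prop140710f}. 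The trade-off is clear: the paper's argument is shorter and avoids induction at the cost of citing a nontrivial external result on derived completeness, while yours is longer but more self-contained, relying only on facts established within the paper plus the standard bimodule-structure observation for $\ext$. Your closing remark also correctly isolates why the annihilator argument works only at the module level---the $a$-action must be null at the chain level, not merely on homology---which is exactly what necessitates the reduction by induction on amplitude.
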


\begin{proof}
If $X\simeq 0$, then $\cosupp_R(X)=\emptyset=\supp_R(X)$, and we are done.
So, assume that $X\not\simeq 0$.
Set $i=\inf(X)$ and $s=\sup(X)$.
Then, we have
\begin{align*}
\supp_R(X)
&=\Supp_R(X)\\
&=\bigcup_{j=i}^s\Supp_R(\HH_j(X))\\
&=\bigcup_{j=i}^s\VE(\ann_R(\HH_j(X)))\\
&=\VE\left(\bigcap_{j=i}^s\ann_R(\HH_j(X))\right).
\end{align*}
Set $\fa=\bigcap_{j=i}^s\ann_R(\HH_j(X))$, so we have $\supp_R(X)=\VE(\fa)$.
Now, each module $\HH_j(X)$ is annihilated by $\fa$.
In particular, each $\HH_j(X)$ is $\fa$-adically complete.
So the natural morphism $X\to \LL aX$ is an isomorphism in $\catd(R)$
by \cite[Theorem~1.21]{yekutieli:ccc}.
Thus, Proposition~\ref{prop130619ae} implies that $\cosupp_R(X)\subseteq\VE(\fa)=\supp_R(X)$.
\end{proof}

The next example shows that the assumption $X\in\catdfb(R)$ in the previous result is essential.

\begin{ex}\label{ex140118a}
Assume that $(R,\m)$ is local and not artinian, and set $E=E_R(k)$.
In particular, we have 
$$\supp_R(E)=\{\m\}\subsetneq\spec(R)=\cosupp_R(E)$$
by Propositions~\ref{lem130622a} and~\ref{prop140118a}.
\end{ex}

In light of Proposition~\ref{prop140118b} and Theorem~\ref{prop140118d}, it is natural to ask for a characterization of $\cosupp_R(R)$.
The next results with the discussion at the end of~\cite[Section 4]{benson:csc} 
show that this is a subtle question:
for instance, we can have $\cosupp_R(R)\neq\supp_R(R)$ or $\cosupp_R(R)=\supp_R(R)$.

\begin{fact}[\protect{\cite[Proposition 4.19]{benson:csc}}]\label{prop140118c}
The following conditions are equivalent:
\begin{enumerate}[(i)]
\item $R$ is $\fa$-adically complete;
\item $\cosupp_R(R)\subseteq\VE(\fa)$;
\item for all $X\in\catdfb(R)$, one has $\cosupp_R(X)\subseteq\VE(\fa)$.
\end{enumerate}
Proposition~\ref{prop140111e}\eqref{prop140111e2} implies that
$\max(\supp_R(X))\subseteq\cosupp_R(X)$ for all $X\in\catd(R)$.
Thus, if $(R,\m)$ is local, then the following conditions are equivalent:
\begin{enumerate}[(i)]
\item $R$ is $\m$-adically complete;
\item $\cosupp_R(R)=\{\m\}$;
\item for all $0\not\simeq X\in\catdfb(R)$, one has $\cosupp_R(X)=\{\m\}$.
\end{enumerate}
\end{fact}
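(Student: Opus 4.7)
The first three-way equivalence is quoted directly from \cite[Proposition 4.19]{benson:csc}, and the inclusion $\max(\supp_R(X))\subseteq\cosupp_R(X)$ for every $X\in\catd(R)$ is immediate from Proposition~\ref{prop140111e}\eqref{prop140111e2}. The plan is to deduce the local-case equivalences from these two ingredients. The key auxiliary observation is: when $(R,\m)$ is local and $0\not\simeq X\in\catdfb(R)$, one has $\m\in\cosupp_R(X)$. To see this, pick any index $i$ with $\HH_i(X)\neq 0$; the module $\HH_i(X)$ is finitely generated, so Nakayama's lemma gives $\m\in\Supp_R(\HH_i(X))\subseteq\Supp_R(X)=\supp_R(X)$ by Fact~\ref{fact140109a}. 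Since $\m$ is the unique maximal ideal of $R$, it is in particular maximal in $\supp_R(X)$, and the $\max$-observation above then delivers $\m\in\cosupp_R(X)$.

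With this in hand, each implication of the local three-way equivalence drops out routinely. For (i)$\Rightarrow$(iii): if $R$ is $\m$-adically complete, the cited first part (with $\fa=\m$) yields $\cosupp_R(X)\subseteq\VE(\m)=\{\m\}$ for every $X\in\catdfb(R)$, and the auxiliary observation forces equality when $X\not\simeq 0$. The implication (iii)$\Rightarrow$(ii) follows by specializing to $X=R$, which is nonzero. Finally, (ii)$\Rightarrow$(i) is immediate from the first three-way equivalence with $\fa=\m$, since $\{\m\}\subseteq\VE(\m)$ trivially.

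No genuine obstacle is anticipated here; the whole argument is short bookkeeping atop the cited fact and the $\max$-observation. The only subtlety worth flagging is the nonvanishing hypothesis in (iii): one needs to exclude $X\simeq 0$ explicitly, since otherwise $\cosupp_R(X)=\emptyset\neq\{\m\}$, which is why that hypothesis appears in the statement.
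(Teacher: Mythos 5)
Your proposal is correct and follows exactly the line the paper intends: the first three-way equivalence is taken verbatim from Benson--Iyengar--Krause, the observation $\max(\supp_R(X))\subseteq\cosupp_R(X)$ is read off from Proposition~\ref{prop140111e}\eqref{prop140111e2}, and the local case is then bookkeeping (specializing $\fa=\m$, noting $\VE(\m)=\{\m\}$, and using $\m\in\supp_R(X)=\Supp_R(X)$ for nonzero $X\in\catdfb(R)$ to force equality). The paper does not write out this bookkeeping, merely saying ``Thus,'' so your proposal simply makes explicit the argument the paper gestures at.
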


\begin{prop}\label{cor140118a}
If $(R,\m)$ is a 1-dimensional local integral domain, then 
$$\cosupp_R(R)=\begin{cases}
\{\m\}&\text{if $R$ is $\m$-adically complete, and}\\
\spec(R)&\text{if $R$ is not $\m$-adically complete.}\end{cases}$$
\end{prop}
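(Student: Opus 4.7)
The plan is to leverage Fact~\ref{prop140118c} together with the fact that in a one-dimensional local integral domain there are only two primes to consider. First, I would note that since $R$ is a one-dimensional local integral domain, the only height zero prime is $(0)$ and every height one prime is maximal, so $\spec(R) = \{(0), \m\}$. Since $R\in\catdf_+(R)$ with $R\not\simeq 0$, Fact~\ref{fact140109a} gives $\supp_R(R) = \Supp_R(R) = \spec(R)$, and Proposition~\ref{prop140111e}\eqref{prop140111e2} then yields $\m \in \max(\supp_R(R)) \subseteq \cosupp_R(R)$ in both cases.

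For the complete case, Fact~\ref{prop140118c} gives $\cosupp_R(R) \subseteq \VE(\m) = \{\m\}$, and combined with the observation above, we conclude $\cosupp_R(R) = \{\m\}$.

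For the non-complete case, the same Fact~\ref{prop140118c} gives $\cosupp_R(R) \not\subseteq \{\m\}$, so $\cosupp_R(R)$ must contain some prime different from $\m$; since $\cosupp_R(R) \subseteq \spec(R) = \{(0),\m\}$, this forces $(0) \in \cosupp_R(R)$. Combined with $\m\in\cosupp_R(R)$ from the first paragraph, we get $\cosupp_R(R) = \{(0), \m\} = \spec(R)$.

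There is no real obstacle here; the only thing to watch is to invoke the correct direction of Fact~\ref{prop140118c} (noncompleteness being what forces the generic point into the co-support) and to combine it cleanly with the maximal-element property from Proposition~\ref{prop140111e}\eqref{prop140111e2} to capture $\m$ as well.
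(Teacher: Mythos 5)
Your proof is correct and follows essentially the same route as the paper's: you combine Proposition~\ref{prop140111e}\eqref{prop140111e2} (to force $\m$ into the co-support) with Fact~\ref{prop140118c} (to decide whether $(0)$ is in it), exactly as the paper does. The only cosmetic difference is that you invoke the general form of Fact~\ref{prop140118c} with $\fa=\m$ rather than the local reformulation stated just below it, but these are interchangeable.
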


\begin{proof}
Proposition~\ref{prop140111e}\eqref{prop140111e2}  
implies that 
$$\{\m\}=\mspec(R)\subseteq\cosupp_R(R)\subseteq\spec(R)=\{0,\m\}$$
and
Fact~\ref{prop140118c} says that
$R$ is $\m$-adically complete if and only if $\cosupp_R(R)=\{\m\}$.
This provides the desired conclusion.
\end{proof}

The next result is Theorem~\ref{prop140118dx}\eqref{prop140118dxb} from the introduction.
It applies, for instance, to any polynomial ring in one variable over a field and its localizations.

\begin{thm}\label{prop140709a}
If $R$ is a 1-dimensional  integral domain. If $R$ has a dualizing complex, then 
$$\cosupp_R(R)=\begin{cases}
\mspec(R)&\text{if $R$ is local and  complete, and}\\
\spec(R)&\text{otherwise.}\end{cases}$$
\end{thm}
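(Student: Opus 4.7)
The plan is to reduce to the single question of whether $0 \in \cosupp_R(R)$, and then split into the two cases of the theorem. By Proposition~\ref{prop140111e}\eqref{prop140111e2} together with Proposition~\ref{prop140709b}, the maximal elements of $\supp_R(R) = \Supp_R(R) = \spec(R)$ are precisely $\mspec(R)$, and so $\mspec(R) \subseteq \cosupp_R(R) \subseteq \spec(R) = \{0\} \cup \mspec(R)$. Thus $\cosupp_R(R)$ equals either $\mspec(R)$ or $\spec(R)$, depending only on whether $\Rhom{K}{R} \not\simeq 0$, where $K = \kappa(0)$ is the fraction field. If $R$ is local and complete with maximal ideal $\fm$, then $R$ is $\fm$-adically complete, so Fact~\ref{prop140118c} gives $\cosupp_R(R) \subseteq \VE(\fm) = \{\fm\} = \mspec(R)$, which handles the first case.

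For the ``otherwise'' case, $R$ is not simultaneously local and complete, and we must show $\Rhom{K}{R} \not\simeq 0$. As a $1$-dimensional domain, $R$ is Cohen--Macaulay (depth $= \dim = 1$), so the dualizing complex provides a dualizing (canonical) module $\omega$: a finitely generated torsion-free $R$-module of rank $1$ with $\injdim_R \omega = 1$ and $R \simeq \Rhom{\omega}{\omega}$. Hom-tensor adjunction, together with the isomorphism $\Lotimes{K}{\omega} \simeq K$ (since $K$ is flat and $\omega$ has rank $1$), yields $\Rhom{K}{R} \simeq \Rhom{\Lotimes{K}{\omega}}{\omega} \simeq \Rhom{K}{\omega}$. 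From the Bass numbers of a dualizing module ($\mu^0_0(\omega) = 1$ and $\mu^1_\fm(\omega) = 1$ for each maximal $\fm$), the minimal injective resolution of $\omega$ has the form $0 \to \omega \to K \to \bigoplus_{\fm \in \mspec(R)} E_R(R/\fm) \to 0$, so there is a short exact sequence $0 \to \omega \to K \to K/\omega \to 0$ with $K/\omega \cong \bigoplus_\fm E_R(R/\fm)$. Applying $\Hom(-, \omega)$ and using the vanishings $\Hom{K/\omega}{\omega} = 0 = \Hom{K}{\omega}$ (torsion or divisible source into a finitely generated torsion-free target), together with $\End_R(\omega) = R$ and $\Ext{i}{\omega}{\omega} = 0$ for $i \geq 1$, produces the exact sequence $0 \to R \to \Ext{1}{K/\omega}{\omega} \to \Ext{1}{K}{\omega} \to 0$. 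Reusing the injective resolution of $\omega$, together with support considerations and $\End_R(E_R(R/\fm)) = \comp{R}_\fm$, gives $\Ext{1}{E_R(R/\fm)}{\omega} \cong \comp{R}_\fm$, hence $\Ext{1}{K/\omega}{\omega} \cong \prod_\fm \comp{R}_\fm$, and the connecting map $R \to \prod_\fm \comp{R}_\fm$ is identified as the natural diagonal completion map.

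To finish, we show this diagonal map fails to be surjective. For $R$ non-local: since $R$ is a Noetherian domain, $R \hookrightarrow \comp{R}_\fm$ for every $\fm$ (by Krull intersection), so any $r \in R$ vanishing in some $\comp{R}_{\fm_2}$ must be $0 \in R$ and hence $0$ in every $\comp{R}_\fm$; thus an element such as $(1,0,0,\dots) \in \prod_\fm \comp{R}_\fm$ admits no preimage. For $R$ local but not complete: the single factor map $R \to \comp{R}_\fm$ itself fails to be surjective. In either subcase, $\Ext{1}{K}{\omega} \neq 0$, so $\Rhom{K}{\omega} \not\simeq 0$ and hence $\Rhom{K}{R} \not\simeq 0$; therefore $0 \in \cosupp_R(R)$, giving $\cosupp_R(R) = \spec(R)$.

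The main obstacle is the explicit identification of the connecting map $R \to \prod_\fm \comp{R}_\fm$ as the natural diagonal map. This amounts to tracking the class of the tautological extension $[0 \to \omega \to K \to K/\omega \to 0]$ through the successive isomorphisms: under the decomposition $K/\omega \cong \bigoplus_\fm E_R(R/\fm)$, the restriction of the extension to the $E_R(R/\fm)$-summand corresponds, via $\Ext{1}{E_R(R/\fm)}{\omega} \cong \End_R(E_R(R/\fm)) = \comp{R}_\fm$, to the identity endomorphism, i.e., to $1 \in \comp{R}_\fm$; by $R$-linearity, an element $r \in R$ then maps to $(r)_\fm$ as required.
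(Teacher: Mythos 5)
Your proof is correct, but it takes a genuinely different route from the paper's in the crucial nonlocal (and local--noncomplete) step. The paper first disposes of the local case via Proposition~\ref{cor140118a}, then for $R$ not local works directly with the dualizing complex $D$ and its minimal injective resolution $0\to Q\to E\to 0$: since $\Hom{Q}{Q}\cong Q$ has $Q$-rank $1$ while $E$ contains at least two summands $E_R(R/\fm_1)\oplus E_R(R/\fm_2)$, each contributing a nonzero $Q$-vector space $\Hom{Q}{E_R(R/\fm_i)}$, the target $\Hom{Q}{E}$ has $Q$-rank $\geq 2$, so the cokernel $\Ext 1QD$ is automatically nonzero --- no identification of the map is needed. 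You instead pass to the canonical module $\omega$, explicitly compute $\Ext 1{K/\omega}{\omega}\cong\prod_\fm\comp{R}_\fm$, and carry out the Yoneda-style diagram chase to identify the connecting map $R\to\prod_\fm\comp{R}_\fm$ as the diagonal of the natural completion maps, then argue that this map fails to be surjective. Your route is more informative --- it computes $\Ext 1Q\omega$ as the cokernel of $R\to\prod_\fm\comp{R}_\fm$ rather than merely proving it nonzero, and it treats ``not local'' and ``local but not complete'' uniformly --- but it is also more delicate: the explicit tracking of the tautological extension class through $\Ext 1{K/\omega}{\omega}\cong\prod_\fm\End_R(E_R(R/\fm))$ (which you rightly flag as the main obstacle) is precisely the step the paper's rank count avoids. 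Two small points you should make explicit for a clean write-up: that a $1$-dimensional domain with a dualizing complex is CM with connected spectrum, hence admits a genuine canonical module $\omega$ of injective dimension $1$ (this is what justifies the short exact sequence $0\to\omega\to K\to\bigoplus_\fm E_R(R/\fm)\to 0$); and that $\Hom{E_R(R/\fm)}{\bigoplus_\fn E_R(R/\fn)}$ collapses to $\End_R(E_R(R/\fm))$ because $E_R(R/\fm)$ is supported only at $\fm$.
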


\begin{proof}
If $R$ is local, then we are done by Proposition~\ref{cor140118a}.
So, we assume for the rest of the proof that $R$ is not local.
Let $Q=\kappa(0)$ denote the field of fractions of $R$, and let $D$ be a dualizing $R$-complex.
Shift $D$ if necessary to assume that its minimal injective resolution 
has the form
$$0\to Q\xra\partial E\to 0$$
where $E:=\bigoplus_{\m}E_R(R/\m)$;
here the direct sum is taken over all $\m\in\mspec(R)$, and the complex is concentrated in degrees 0 and $-1$.
Proposition~\ref{prop140118b} shows that it suffices to show that $\cosupp_R(D)=\spec(R)$.

Since $\supp_R(D)=\spec(R)$,  Proposition~\ref{prop140111e}\eqref{prop140111e2} implies that
$$\mspec(R)\subseteq\cosupp_R(D)\subseteq\spec(R)=\mspec(R)\bigcup\{0\}.$$
Thus, it suffices to show that $0\in\cosupp_R(D)$.
By Proposition~\ref{prop140709c}, it suffices to show that $0\in\Cosupp_R(D)$, that is, that
$\Rhom QD\not\simeq 0$. We accomplish this by showing that $\Ext 1QD\neq 0$.
Using the injective resolution of $D$ from the beginning of this proof,
we see that $\Ext 1QD$ is the cokernel of the following $Q$-linear map.
$$\underbrace{\Hom QQ}_{\cong Q}\xra{\Hom Q\partial}\Hom QE$$
To show that this cokernel is non-zero, it suffices to show  $\rank_Q(\Hom QE)\geq 2$.

To this end, Proposition~\ref{prop140710e} implies that, for each $\m\in\mspec(R)$, we have $0\in\cosupp_R(E_R(R/\m))$.
This means that 
$$\Hom{Q}{E_R(R/\m)}=\Hom{\kappa(0)}{E_R(R/\m)}\neq 0.$$
Since $R$ is not local we can write $E\cong E_R(R/\m_1)\bigoplus E_R(R/\m_2)\bigoplus E'$
where $\m_1,\m_2$ are distinct maximal ideals. It follows that 
\begin{align*}
\rank_Q(\Hom QE)
&\geq\sum_{i=1}^2 \rank_Q(\Hom Q{E_R(R/\m_i)})\geq 2
\end{align*}
by the previous display.
\end{proof}

\begin{disc}\label{rmk140709a}
Let 
$\mathfrak{C}(R)$ denote the set of ideals $\fa\subsetneq R$ such that
$R$ is $\fa$-adically complete. Since $R$ is $0$-adically complete, this is a non-empty set of ideals of $R$,
so the noetherian property implies that $\mathfrak{C}(R)$ has maximal elements.
If $R$ is $\fa$-adically complete and $\fb$-adically complete, then it is also $(\fa+\fb)$-adically complete;
so $\mathfrak{C}(R)$ has a unique maximal element, that we denote $\mathfrak{c}(R)$.
Since $R$ is also $\rad{\mathfrak{c}(R)}$-adically complete, the maximality of $\mathfrak{c}(R)$ implies that
$\mathfrak{c}(R)$ is a radical ideal.
\end{disc}

\begin{question}\label{q140118a}
With the above notation, must we have $\cosupp_R(R)=\VE(\mathfrak{c}(R))$?
\end{question}

\begin{disc}\label{rmk140118c}
Fact~\ref{prop140118c} implies that
$\cosupp_R(R)\subseteq\VE(\mathfrak{c}(R))$.

Question~\ref{q140118a} has an affirmative answer for 1-dimensional local domains
(where we have $\mathfrak{c}(R)=0$ or $\mathfrak{c}(R)=\m$) by Proposition~\ref{cor140118a}.
And Fact~\ref{prop140118c} shows the same for complete local rings (where $\mathfrak{c}(R)=\m$).
Theorem~\ref{prop140709a} does the same for 1-dimensional  domains with dualizing complexes, as follows. 

The local case is already established, so assume that $R$ is not local. Then it suffices to show that $\mathfrak{c}(R)=0$,
that is, that $R$ is not $\fa$-adically complete for any $\fa\neq 0$. 
Suppose that $\fa\neq 0$ and $R$ is $\fa$-adically complete. 
It follows that $\fa$ is contained in the Jacobson radical  $\Jac (R)$.
Since $R$ is a 1-dimensional domain, the non-zero ideal $\fa$ has a primary decomposition
$\fa=\bigcap_{i=1}^n\q_i$ such that each $\rad{\q_i}$ is maximal.
It follows that $\mspec(R)=\{\rad{\q_1},\ldots,\rad{\q_n}\}$
and that $R$ is complete with respect to $\Jac(R)$. This implies that $R$ is a product of complete local rings.
However, the fact that $R$ is a domain implies that it does not decompose as a non-trivial product.
Hence, $R$ is a complete local ring, contradicting the assumption that $R$ is not local.
\end{disc}

On the subject of products, we have the following. It shows that 
Question~\ref{q140118a} has an affirmative answer for a  product $R=A\times B$ if and only if 
it has an affirmative answer for the factors $A$ and $B$.
A simple induction argument extends this to finite products, thus reducing the question to the case of rings
that do not admit non-trivial product decompositions.

\begin{prop}\label{prop140709d}
Let $A$ and $B$ be non-zero commutative  noetherian rings, and set $R=A\times B$.
\begin{enumerate}[\rm(a)]
\item \label{prop140709d1}
One has $\fc(R)=\fc(A)\oplus\fc(B)$.
\item \label{prop140709d2}
Identifying $\spec(R)$ with the disjoint union $\spec(A)\bigsqcup\spec(B)$,
one has an equality $\cosupp_R(R)=\cosupp_A(A)\bigsqcup\cosupp_B(B)$.
\item \label{prop140709d3}
One has $\cosupp_R(R)=\VE(\mathfrak{c}(R))$ if and only if $\cosupp_A(A)=\VE(\mathfrak{c}(A))$ and $\cosupp_B(B)=\VE(\mathfrak{c}(B))$.
\end{enumerate}
\end{prop}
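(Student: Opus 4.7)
The plan is to prove the three parts in order, leveraging the product decomposition $R=A\times B$ throughout.

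For part~\eqref{prop140709d1}, I would use the fact that every ideal of $R=A\times B$ decomposes uniquely as $\fa\oplus\fb$ with $\fa\subseteq A$ and $\fb\subseteq B$ ideals, and that the $(\fa\oplus\fb)$-adic completion of $R$ identifies naturally with the product of the $\fa$-adic completion of $A$ and the $\fb$-adic completion of $B$. Hence $R$ is $(\fa\oplus\fb)$-adically complete if and only if $A$ is $\fa$-adically complete and $B$ is $\fb$-adically complete. The maximal such ideal is then $\fc(A)\oplus\fc(B)$, which is automatically proper in $R$ since $\fc(A)\subsetneq A$ and $\fc(B)\subsetneq B$. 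By the uniqueness of the maximum from Remark~\ref{rmk140709a}, we conclude $\fc(R)=\fc(A)\oplus\fc(B)$.

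For part~\eqref{prop140709d2}, identify $\spec(R)=\spec(A)\sqcup\spec(B)$ via $\fp\mapsto\fp\times B$ and $\fq\mapsto A\times\fq$. Given $\fp\in\spec(A)$, the residue field of the corresponding prime of $R$ is $\kappa(\fp)$, viewed as an $R$-module via the projection $R\to A$. The key observation is that for any $A$-module $M$ and $B$-module $N$ (each made into an $R$-module through its projection), one has $\Hom[R]{M}{N}=0$, because the idempotent $e=(0,1)\in R$ acts as $0$ on $M$ and as $1$ on $N$. Taking a semi-injective resolution $A\oplus B\xra\simeq I_A\oplus I_B$ (where $I_A$, $I_B$ are semi-injective resolutions over $A$ and $B$, respectively, viewed as $R$-complexes), this observation gives
\[
\Rhom[R]{\kappa(\fp)}{R}
\simeq\Rhom[R]{\kappa(\fp)}{A}\oplus\Rhom[R]{\kappa(\fp)}{B}
\simeq\Rhom[A]{\kappa(\fp)}{A},
\]
where the last step uses that $R$-linear maps between $A$-modules coincide with $A$-linear maps. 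Therefore $\fp\times B\in\cosupp_R(R)$ if and only if $\fp\in\cosupp_A(A)$; the symmetric argument applies to primes from $\spec(B)$, yielding the claimed equality.

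For part~\eqref{prop140709d3}, combine parts~\eqref{prop140709d1} and~\eqref{prop140709d2}. Under the identification of $\spec(R)$ from part~\eqref{prop140709d2}, we have $\VE(\fc(R))=\VE(\fc(A)\oplus\fc(B))=\VE(\fc(A))\sqcup\VE(\fc(B))$, since a prime $\fp\times B$ contains $\fc(A)\oplus\fc(B)$ if and only if $\fp\supseteq\fc(A)$. Then $\cosupp_R(R)=\VE(\fc(R))$ becomes $\cosupp_A(A)\sqcup\cosupp_B(B)=\VE(\fc(A))\sqcup\VE(\fc(B))$, and since each side respects the disjoint decomposition $\spec(A)\sqcup\spec(B)$, this equality holds if and only if it holds on each factor separately. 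The main technical point to handle carefully is the Hom computation in part~\eqref{prop140709d2}; once the vanishing $\Rhom[R]{M}{N}\simeq 0$ for $M$ an $A$-module and $N$ a $B$-module is in hand, the rest is bookkeeping.
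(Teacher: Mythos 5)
Your proof is correct and follows essentially the same approach as the paper: decompose ideals and primes of $R=A\times B$ componentwise, observe that adic completeness factors through the product, and reduce the $\Rhom$ computation for $\kappa(\fp\times B)$ to one over $A$. The paper leaves the key $\Rhom$ reduction in part~(b) as "straightforward"; you have usefully made explicit the idempotent argument showing $\Rhom[R]{M}{N}\simeq 0$ for $M$ an $A$-module and $N$ a $B$-module.
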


\begin{proof}
The ideals of $R$ are of the form $I\oplus J$ for ideals $I\subseteq A$ and $J\subseteq B$.
It is straightforward to show that $R$ is $(I\oplus J)$-adically complete if and only if
$A$ is $I$-adically complete and $B$ is $J$-adically complete. 
This explains part~\eqref{prop140709d1}.

The prime ideals of $R$ are of the form $P\oplus B$ and $A\oplus Q$ for primes $P\subset A$ and $Q\subset B$.
It is straightforward to show that $\kappa(P\oplus B)\cong\kappa(P)$ and
$$\Rhom{\kappa(P\oplus B)}{R}\simeq\Rhom[A]{\kappa(P)}{A}.$$
Thus, we have $P\oplus B\in \cosupp_R(R)$ if and only if $P\in\cosupp_A(A)$,
and similarly for $A\oplus Q$. 
This explains part~\eqref{prop140709d2}.

Lastly, part~\eqref{prop140709d3} follows from parts~\eqref{prop140709d1} and~\eqref{prop140709d2}.
\end{proof}

We close this section with a flagrant display of how little we understand about Question~\ref{q140118a} and about
$\cosupp_R(R)$ in general. Note that for the rings in this question, a straightforward cardinality argument shows that
$\fc(R)=0$.

\begin{question}
Let $k$ be a field, and let $R$ be the polynomial ring $k[X,Y]$ or the localized polynomial ring $k[X,Y]_{(X,Y)}$, with field of fractions $Q$.
Do we have $\cosupp_R(R)=\spec(R)$? In particular, do we have $0\in\cosupp_R(R)$, that is, do we have $\Rhom QR\not\simeq 0$?
\end{question}

\section{Adic Finiteness}\label{sec140109c}

The main result of this section is Theorem~\ref{thm130612a}, i.e., Theorem~\ref{cor130612a} from the introduction.
We begin with versions for half-bounded complexes in Propositions~\ref{prop130610a} and~\ref{prop130613b}.
It should be noted that, in~\ref{prop130613b},  the equivalence of conditions (i) and (iii)--(vi) is
in~\cite[Claim 1]{kawasaki:ccc}. However, our proof  is significantly different in a key way:
instead of using spectral sequences, we use a small amount of technology from differential graded (DG) homological algebra.
Specifically, we use the following.

Let $\x=x_1,\ldots,x_n\in R$. 
The Koszul complex $K=K^R(\x)$ has the structure of a positively graded, commutative DG $R$-algebra.
As with $R$-complexes, we index DG $K$-modules homologically, 
and $\Lotimes[K]--$ and $\Rhom[K]--$ are the derived functors of $\Otimes[K]--$ and $\Hom[K]--$.
References on DG algebras and DG modules include~\cite{apassov:hddgr, avramov:ifr, avramov:lgh, 
avramov:dgha, beck:sgidgca, felix:rht, frankild:ddgmgdga, nasseh:lrfsdc, nasseh:ldgm}. 
We most closely follow the conventions from~\cite{nasseh:lrfsdc}. 

\begin{prop}\label{prop130610a}
Let $X\in\catd_+(R)$. Then the following conditions are equivalent:
\begin{enumerate}[\rm(i)]
\item\label{prop130610a1}
One has $\Lotimes{(R/\fa)}{X}\in\catd^{\text{f}}(R)$;
\item\label{prop130610a2}
One has $\Lotimes{(R/\fb)}{X}\in\catd^{\text{f}}(R)$ for all ideals $\fb \supseteq \fa$;
\item\label{prop130610a2'}
One has $\Lotimes{(R/\p)}{X}\in\catd^{\text{f}}(R)$ for all prime ideals $\p \in \VE(\fa)$;
\item\label{prop130610a3}
One has $\Lotimes{N}{X}\in\catd^{\text{f}}(R)$ for all finitely generated $R$-modules $N$ such that $\Supp_{R}(N) \subseteq \VE(\fa)$;
\item\label{prop130610a2''}
One has $\Lotimes{(R/\rad{\fa)}}{X}\in\catd^{\text{f}}(R)$;
\item\label{prop130610a4}
One has $\Lotimes{Y}{X}\in\catd^{\text{f}}(R)$ for all  $Y\in\catdfb(R)$ with $\Supp_{R}(Y) \subseteq \VE(\fa)$;
\item\label{prop130610a5}
One has $\Lotimes{K^R(\underline{x})}{X}\in\catd^{\text{f}}(R)$ for some (equivalently, for every) generating sequence $\underline{x}$ of $\fa$.
\end{enumerate}
\end{prop}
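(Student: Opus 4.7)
The plan is to reduce the seven-way equivalence to two non-trivial implications, (i)$\Rightarrow$(ii') and (v)$\Rightarrow$(i), with the remaining implications following from tautological containments, prime-filtration arguments, or way-out induction.

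For the easy bookkeeping, note that $R/\fb$, $R/\p$, $R/\rad{\fa}$, every f.g.\ $R$-module $N$ with $\Supp_R(N)\subseteq\VE(\fa)$, and the Koszul complex $K^R(\underline{x})$ all lie in $\catdfb(R)$ with support in $\VE(\fa)$ by Fact~\ref{fact140109a}, making (iv)$\Rightarrow$(iii), (ii), (ii'), (ii''), (i), (v), as well as (iii)$\Rightarrow$(ii), (ii'), (ii''), (i), and (ii)$\Rightarrow$(i), (ii'), (ii''), all tautological. For (ii')$\Rightarrow$(iii), I would take the standard prime filtration $0 = N_0\subset\cdots\subset N_k = N$ with $N_i/N_{i-1}\cong R/\p_i$, $\p_i\in\Supp_R(N)\subseteq\VE(\fa)$, and induct via the triangles $\Lotimes{N_{i-1}}{X}\to\Lotimes{N_i}{X}\to\Lotimes{R/\p_i}{X}\to$. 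The same filtration applied to $R/\fa$, $R/\fb$, and $R/\rad{\fa}$ yields (ii')$\Rightarrow$(i), (ii), (ii''), and a way-out induction on $\amp(Y)$ using truncation triangles delivers (iii)$\Rightarrow$(iv).

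For (i)$\Rightarrow$(ii'), set $\ol R := R/\fa$; since $\HH_*(\Lotimes{\ol R}{X})$ is annihilated by $\fa$ and f.g.\ over $R$ by (i), the complex $Y := \Lotimes{\ol R}{X}$ lies in $\catdf_+(\ol R)$, and I may replace it by a bounded-below semifree $\ol R$-resolution $Y'$ with each $Y'_j$ a f.g.\ free $\ol R$-module. For $\p\in\VE(\fa)$, derived base change produces
$$\Lotimes{R/\p}{X}\simeq\Lotimes[\ol R]{R/\p}{Y}\simeq R/\p\otimes_{\ol R}Y',$$
and each component $(R/\p\otimes_{\ol R}Y')_n$ is a finite direct sum of copies of $R/\p$, hence f.g.\ over $R$. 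Therefore $\HH_*(\Lotimes{R/\p}{X})$ is f.g.\ in every degree.

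The main obstacle is (v)$\Rightarrow$(i). The key observation is that with $K := K^R(\underline{x})$ bounded and semiflat, the honest tensor product $K\otimes_R R/\fa$ has zero differential (the Koszul maps vanish modulo $\fa$), giving the isomorphism
$$\Lotimes{R/\fa}{(\Lotimes{K}{X})}\simeq(R/\fa\otimes_R K)\otimes^{\mathbf L}_R X\simeq\bigoplus_{i=0}^n\shift^i\bigl(\Lotimes{R/\fa}{X}\bigr)^{\binom{n}{i}}$$
in $\catd(R)$. Thus (i) is equivalent to the finite generation of $\Lotimes{R/\fa}{M}$, where $M := \Lotimes KX$. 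Assuming (v), the hyper-Tor spectral sequence
$$E^2_{p,q} = \Tor^R_p(R/\fa,\HH_q(M))\Longrightarrow\HH_{p+q}(\Lotimes{R/\fa}{M})$$
has f.g.\ $R/\fa$-entries (since $\HH_q(M)$ and $R/\fa$ are both f.g.\ over $R$), and the lower boundedness $\inf(M)\geq\inf(X)$ confines the contributing bidegrees to a finite region for each total degree, so $\HH_n(\Lotimes{R/\fa}{M})$ is f.g.\ over $R$. Extracting the $i = 0$ summand from the displayed decomposition then recovers the f.g.-ness of $\HH_n(\Lotimes{R/\fa}{X})$, yielding (i) and closing the cycle. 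I expect the most care to be required for the zero-differential computation of $K\otimes R/\fa$ and for ensuring that the summand decomposition correctly propagates finite generation back to $\Lotimes{R/\fa}{X}$.
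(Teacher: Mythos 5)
Your proposal is correct, and the reduction to the two implications (i)$\Rightarrow$(ii$'$) and (v)$\Rightarrow$(i) is a valid way to close the cycle; the prime-filtration and amplitude-induction steps are also exactly what the paper does. For (i)$\Rightarrow$(ii$'$), your base change along $R\to R/\fa$ using a degree-wise finite semifree resolution over $\ol R$ is essentially identical to the paper's proof of (i)$\Rightarrow$(ii).

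Where you genuinely diverge is (v)$\Rightarrow$(i). The paper equips $K=K^R(\underline{x})$ with its DG $R$-algebra structure, chooses a degree-wise finite semifree resolution of $\Lotimes KX$ as a DG $K$-module, and base-changes along the DG algebra morphism $K\to R/\fa$; indeed the authors explicitly advertise that they are replacing spectral sequences with ``a small amount of technology from differential graded homological algebra.'' You instead observe that $K\otimes_R R/\fa$ has zero differential because $\underline{x}\subseteq\fa$, whence
$\Lotimes{R/\fa}{(\Lotimes KX)}\simeq\bigoplus_{i=0}^n\shift^i(\Lotimes{R/\fa}X)^{\binom ni}$,
and then extract the $i=0$ summand; this is a nice, elementary observation not present in the paper, and it cleanly converts (v) into an assertion about $\Lotimes{R/\fa}M$ with $M:=\Lotimes KX\in\catdf_+(R)$. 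One remark: having reduced to $M\in\catdf_+(R)$, you reach for the hyper-Tor spectral sequence, which is exactly the tool the paper set out to avoid. You don't need it: since $R$ is noetherian and $M\in\catdf_+(R)$, it has a bounded-below degree-wise finite free resolution $F\xra{\simeq}M$ over $R$, and $\Lotimes{R/\fa}M\simeq R/\fa\otimes_R F$ is then a bounded-below complex of finitely generated $R/\fa$-modules, hence in $\catdf(R)$ directly. So your route, stripped of the spectral sequence, is arguably the most elementary of the three and buys you a proof that uses neither spectral sequences nor DG algebra, at the cost of the one computation with the Koszul complex mod $\fa$.
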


\begin{proof}
$\eqref{prop130610a1}\implies\eqref{prop130610a2}$. Consider the following commutative diagram of ring epimomorphisms.
$$\xymatrix{
R \ar[r] \ar[rd] & R/\fa \ar[d] \\
& R/\fb
}$$
By assumption, we have $\Lotimes{(R/\fa)}{X}\in\catdf_+(R)$, so  $\Lotimes{(R/\fa)}{X}\in\catdf_+(R/\fa)$.
Using a degree-wise finite free resolution $F$ of $\Lotimes{(R/\fa)}{X}$ over $R/\fa$, we see that that $\Lotimes{(R/\fb)}{X}\simeq \Lotimes[R/\fa]{(R/\fb)}{(\Lotimes{R/\fa}{X})}\in\catd^{\text f}(R/\fb)$, so $\Lotimes{(R/\fb)}{X}$ is in $\catd^{\text f}(R)$. 

$\eqref{prop130610a2}\implies\eqref{prop130610a2'}$. trivial.

$\eqref{prop130610a2'}\implies\eqref{prop130610a3}$. Assume that $N$ is finitely generated with $\Supp_{R}(N) \subseteq \VE(\fa)$. Then there is a prime filtration 
$0=N_0 \subseteq N_1 \subseteq \cdots \subseteq N_t=N$ such that $N_{i}/N_{i-1} \cong R/\p_{i}$ and $\p_i \in \Supp{N}\subseteq\VE(\fa)$ for $i=1,\ldots,t$. We argue by induction on $t$. 

Base case: $t=1$. Then $N \cong N_{1}/N_{0} \cong R/\p$, where $\fa \subseteq \p$. Then by assumption $\Lotimes{N}{X}\simeq\Lotimes{(R/\p)}{X}\in\catdf(R)$.

Induction step. Assume that $\Lotimes{N}{X}\in\catdf(R)$ for all finitely generated $R$-modules $N$ with $\Supp_{R}(N) \subseteq \VE(\fa)$  having a prime filtration of length $t-1$. 
Let $N$ have a prime filtration $0=N_0 \subseteq N_1 \subseteq \cdots \subseteq N_t=N$. Consider the short exact sequence $0 \to N_{t-1} \to N \to N/N_{t-1} \to 0$. 
Applying $\Lotimes{-}{X}$, we obtain the following distinguished triangle in $\catd(R)$.
$$\Lotimes{N_{t-1}}{X} \to \Lotimes{N}{X} \to \Lotimes{(N/N_{t-1})}{X} \to$$ 
By the induction hypothesis
and base case, we have $\Lotimes{N_{m-1}}{X}$ and $\Lotimes{(N/N_{m-1})}{X}$ in $\catd^{\text f}(R)$. Therefore, a long exact sequence argument shows that $\Lotimes{N}{X}\in\catd^{\text f}(R)$. 

$\eqref{prop130610a3}\implies\eqref{prop130610a2''}$. trivial.

$\eqref{prop130610a2''}\implies\eqref{prop130610a2'}$. This follows from the implication
$\eqref{prop130610a1}\implies\eqref{prop130610a2'}$ (applied to the ideal $\rad\fa$) since $\VE(\fa)=\VE(\rad\fa)$.

$\eqref{prop130610a3}\implies\eqref{prop130610a4}$. Assume that $Y\in\catdfb(R)$ with $\Supp_{R}(Y) \subseteq \VE(\fa)$. Then we have $\Supp_{R}(\HH_{i}(Y)) \subseteq \VE(\fa)$. 
By assumption, each module $\HH_i(Y)$ is finitely generated, so we have $\Lotimes{\HH_{i}(Y)}{X}\in\catd^{\text f}(R)$ for all $i$. 
We proceed by induction on $\amp(Y)$. 

Base case: $\amp(Y)=0$. Then $Y$ has one non-zero homology module, so we have $Y \simeq \shift^{i} \HH_{i}(Y)$ for some $i$. 
It follows that $\Lotimes{Y}{X}\simeq\shift^i\Lotimes{\HH_{i}(Y)}{X}\in\catd^{\text f}(R)$ by the previous paragraph.

Induction step: Assume that  for all $Y'\in\catdfb(R)$ such that $\amp(Y') < \amp(Y)$ and $\Supp_{R}(Y') \subseteq \VE(\fa)$
we have $\Lotimes YX\in\catd^{\text f}(R)$. 
Let $s=\sup(Y)$. 
From a ``soft truncation'' of $Y$,  there is a distinguished triangle
$$\shift^{s} \HH_{s}(Y) \to Y \to Y'' \to
$$
in $\catd(R)$
such that the induced map $\HH_{i}(Y)\to \HH_{i}(Y'')$ is an isomorphism  for all $i<s$ and $\HH_{i}(Y'')= 0$ for all $i\geq s$. 
Thus, the induction hypothesis applies to $Y''$ and the base case applies for $\shift^{s} \HH_{s}(Y)$,
so we have $\Lotimes {Y''}YX,\Lotimes {\shift^{s} \HH_{s}(Y)}X\in\catd^{\text f}(R)$.
A long exact sequence argument yields the desired conclusion $\Lotimes {Y}X\in\catd^{\text f}(R)$.

$\eqref{prop130610a4}\implies\eqref{prop130610a5}$.  Condition (v) is the special case $X=K^R(\underline{x})$ of (iv).

$\eqref{prop130610a5}\implies\eqref{prop130610a1}$.  Set $K=K^R(\underline{x})$ and consider the following commutative diagram of morphisms of DG $R$-algebras.
$$\xymatrix{
R \ar[r] \ar[rd] & K \ar[d] \\
& R/\fa.
}$$
Since $\Lotimes{K}{X}\in\catdf(R)$, we have $\Lotimes{K}{X}\in\catdf(K)$.
Using
a degree-wise finite semi-free resolution of $\Lotimes{K}{X}$ over $K$, we conclude that the  complex 
$$\Otimes[K]{(R/\fa)}{X} \simeq \Lotimes[K]{(R/\fa)}{(\Lotimes{K}{X})}\in\catdf(R/\fa)$$
so we have $\Otimes[K]{(R/\fa)}{X} \in\catdf(R)$ as well.
\end{proof}

The next result is proved like Proposition~\ref{prop130610a}, using the functor $\Rhom -X$ in place of $\Lotimes -X$.

\begin{prop}\label{prop130613b}
Let $X\in\catd_-(R)$. Then the following conditions are equivalent:
\begin{enumerate}[\rm(i)]
\item
One has $\Rhom{R/\fa}{X}\in\catd^{\text{f}}(R)$;
\item
One has $\Rhom{R/\fb}{X}\in\catd^{\text{f}}(R)$ for all ideals $\fb \supseteq \fa$;
\item
One has $\Rhom{R/\p}{X}\in\catd^{\text{f}}(R)$ for all prime ideals $\p\in\VE(\fa)$;
\item
One has $\Rhom{N}{X}\in\catd^{\text{f}}(R)$ for all finitely generated $R$-modules $N$ such that $\Supp_{R}(N) \subseteq \VE(\fa)$;
\item
One has $\Rhom{R/\rad\fa}{X}\in\catd^{\text{f}}(R)$;
\item
One has $\Rhom{Y}{X}\in\catd^{\text{f}}(R)$ for all  $Y\in\catdfb(R)$ with $\Supp_{R}(Y) \subseteq \VE(\fa)$;
\item
One has $\Rhom{K^R(\underline{x})}{X}\in\catd^{\text{f}}(R)$ for some (equivalently, for every) generating sequence $\underline{x}$ of $\fa$.
\end{enumerate}
\end{prop}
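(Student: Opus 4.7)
The plan is to mirror the cyclic proof of Proposition~\ref{prop130610a} verbatim, with $\Rhom{-}{X}$ replacing $\Lotimes{-}{X}$ throughout. Concretely, I would establish $\text{(i)}\Rightarrow\text{(ii)}\Rightarrow\text{(iii)}\Rightarrow\text{(iv)}\Rightarrow\text{(v)}\Rightarrow\text{(iii)}$, which closes the sub-loop among the module conditions, and then separately $\text{(iii)}\Rightarrow\text{(iv)}\Rightarrow\text{(vi)}\Rightarrow\text{(vii)}\Rightarrow\text{(i)}$, which closes the main cycle.

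For $\text{(i)}\!\Rightarrow\!\text{(ii)}$, the ring epimorphisms $R\to R/\fa\to R/\fb$ combined with base-change adjunction give
$$\Rhom[R]{R/\fb}{X}\simeq\Rhom[R/\fa]{R/\fb}{\Rhom[R]{R/\fa}{X}}.$$
Since $X\in\catd_-(R)$, the complex $\Rhom[R]{R/\fa}{X}$ lies in $\catdf_-(R/\fa)$ by hypothesis and the bound $\sup(\Rhom[R]{R/\fa}{X})\leq\sup(X)$. Because $R/\fb$ is finitely generated over the noetherian ring $R/\fa$, a standard fact (using a degreewise finitely generated free resolution of $R/\fb$ over $R/\fa$ together with boundedness of $\Rhom[R]{R/\fa}{X}$) places the right-hand side in $\catdf(R/\fa)\subseteq\catdf(R)$. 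Implications $\text{(ii)}\!\Rightarrow\!\text{(iii)}$ and $\text{(iv)}\!\Rightarrow\!\text{(v)}$ are trivial, while $\text{(v)}\!\Rightarrow\!\text{(iii)}$ is an application of $\text{(i)}\!\Rightarrow\!\text{(iii)}$ to $\rad{\fa}$, using $\VE(\fa)=\VE(\rad{\fa})$. For $\text{(iii)}\!\Rightarrow\!\text{(iv)}$ I would take a prime filtration $0=N_0\subseteq\cdots\subseteq N_t=N$ with $N_i/N_{i-1}\cong R/\p_i$ and $\p_i\in\VE(\fa)$, and induct on $t$; the short exact sequence $0\to N_{t-1}\to N\to N/N_{t-1}\to 0$ yields a distinguished triangle
$$\Rhom{N/N_{t-1}}{X}\to\Rhom{N}{X}\to\Rhom{N_{t-1}}{X}\to$$
(with the outer terms reversed from the tensor-product case because $\Rhom$ is contravariant in its first argument), and the outer terms lie in $\catdf(R)$ by induction and base case, hence so does the middle. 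For $\text{(iv)}\!\Rightarrow\!\text{(vi)}$, induction on $\amp(Y)$ using a soft-truncation triangle $\shift^s\HH_s(Y)\to Y\to Y''\to$ proceeds exactly as before, and $\text{(vi)}\!\Rightarrow\!\text{(vii)}$ is immediate since $K^R(\x)$ lies in $\catdfb(R)$ with $\Supp_R(K^R(\x))=\VE(\fa)$.

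The main obstacle, as in the tensor case, is $\text{(vii)}\!\Rightarrow\!\text{(i)}$, which relies on differential graded algebra technology. Setting $K=K^R(\x)$ and considering the commutative diagram $R\to K\to R/\fa$ of DG $R$-algebras, the object $\Rhom[R]{K}{X}$ inherits a natural DG $K$-module structure, and by hypothesis its homology is finitely generated over $R$, hence over $R/\fa=\HH_0(K)$, so it lies in the appropriate finite subcategory of $\catd(K)$. The critical input is a degreewise finite semi-free resolution $F\xra{\simeq}R/\fa$ over $K$, which exists because $K$ has degreewise finitely generated homology and one builds $F$ by inductively adjoining finitely many semi-free generators at each stage to kill homology. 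Then the base-change adjunction furnishes
$$\Rhom[R]{R/\fa}{X}\simeq\Rhom[K]{R/\fa}{\Rhom[R]{K}{X}}\simeq\Hom[K]{F}{\Rhom[R]{K}{X}},$$
and the right-hand side is a bounded-above complex whose underlying graded $R/\fa$-module is degreewise finitely generated, so it lies in $\catdf(R/\fa)\subseteq\catdf(R)$, closing the cycle.
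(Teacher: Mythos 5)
Your overall architecture is sound and matches exactly what the paper intends: mirror the cyclic proof of Proposition~\ref{prop130610a}, replacing $\Lotimes{-}{X}$ with $\Rhom{-}{X}$, and you correctly track the small changes forced by contravariance (the reversed distinguished triangle in (iii)$\Rightarrow$(iv), the coinduction adjunction in place of associativity in (i)$\Rightarrow$(ii)). You also correctly identify that the delicate step is (vii)$\Rightarrow$(i), that it requires DG-algebra technology, and that the useful input is a degreewise finite semi-free resolution $F\xra{\simeq}R/\fa$ over $K$ rather than trying to resolve $\Rhom{K}{X}$ itself.

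However, the final sentence of your (vii)$\Rightarrow$(i) argument has a genuine gap. You claim that $\Hom[K]{F}{\Rhom{K}{X}}$ ``is a bounded-above complex whose underlying graded $R/\fa$-module is degreewise finitely generated.'' This is false. The DG $K$-module $Y:=\Rhom{K}{X}$ is not degreewise finitely generated as a complex: only its \emph{homology} is finitely generated. Writing $F^\natural=\bigoplus_{j\ge 0}(\shift^j K^\natural)^{(\beta_j)}$ with $\beta_j<\infty$, one computes
\[
\Hom[K]{F}{Y}_l \;\cong\; \prod_{j\ge 0} \bigl(Y_{l+j}\bigr)^{\beta_j},
\]
and each $Y_{l+j}$ is in general huge (e.g.\ if $Y$ is represented by $\Hom R{K}{I}$ for a semiinjective resolution $I$ of $X$, the modules $Y_{l+j}$ are built from injectives). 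So the graded pieces of $\Hom[K]{F}{Y}$ are typically not finitely generated, and one cannot read off finite generation of homology from the terms. This is precisely where the $\Rhom$ case differs from the $\Lotimes$ case: in Proposition~\ref{prop130610a} the paper resolves the \emph{bounded-below} DG module $\Lotimes{K}{X}$ and applies $\Otimes[K]{R/\fa}{-}$, which really does yield degreewise finitely generated terms; here $Y$ is bounded above, so no such semi-free resolution of $Y$ exists, and resolving $R/\fa$ instead does not give degreewise finiteness.

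The conclusion is nevertheless correct, but it needs a different justification. One workable approach: filter $F$ by its finitely generated semi-free stages $F^{(m)}$ (with $F^{(m)\natural}=\bigoplus_{j\le m}(\shift^j K^\natural)^{(\beta_j)}$), so that $\Hom[K]{F}{Y}=\varprojlim_m\Hom[K]{F^{(m)}}{Y}$ with surjective transition maps. Each $\Hom[K]{F^{(m)}}{Y}$ is a finite iterated extension of shifts $\shift^{-j}Y^{(\beta_j)}$, so its homology modules are finitely generated over $R/\fa$. Moreover, because $\HH(Y)$ is bounded above, for each fixed $i$ the transition maps $\HH_i(\Hom[K]{F^{(m+1)}}{Y})\to\HH_i(\Hom[K]{F^{(m)}}{Y})$ are isomorphisms once $m>\sup Y - i$; hence the inverse system stabilizes, the $\varprojlim^1$ term vanishes, and $\HH_i(\Hom[K]{F}{Y})$ equals the stable (finitely generated) value. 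This is the content you need in place of the false degreewise-finiteness claim.
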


\begin{disc}\label{rmk140112a}
In the previous result, the self-dual nature of the Koszul complex $K$
implies that condition~(vii) is equivalent to the following:
\begin{enumerate}[$(\text{vii}')$]
\item 
One has $\Lotimes{K^R(\underline{x})}{X}\in\catd^{\text{f}}(R)$ for some (equivalently, for every) generating sequence $\underline{x}$ of $\fa$.
\end{enumerate}
\end{disc}

The following result is Theorem~\ref{cor130612a} from the introduction.

\begin{thm}\label{thm130612a}
Let $X\in\catd_{\text b}(R)$. Then the following conditions are equivalent:
\begin{enumerate}[\rm(i)]
\item\label{thm130612a1}
One has $\Lotimes{K(\underline{x})}{X}\in\catdfb(R)$  for some (equivalently for every) generating sequence $\underline{x}$ of $\fa$;
\item\label{thm130612a2}
One has  $\Lotimes{R/\mathfrak{a}}{X}\in\catd^{\text{f}}(R)$;
\item\label{thm130612a3}
One has  $\Rhom{R/\mathfrak{a}}{X}\in\catd^{\text{f}}(R)$;
\item\label{thm130612a4}
One has $\LLL aX\in\catdfb(\Comp Ra)$.
\end{enumerate}
\end{thm}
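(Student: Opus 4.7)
The plan is to reduce Theorem~\ref{thm130612a} to the half-bounded results of Propositions~\ref{prop130610a} and~\ref{prop130613b}, concentrating the real effort on the equivalence involving~\eqref{thm130612a4}.

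Since $X\in\catdb(R)\subseteq\catd_+(R)\cap\catd_-(R)$, Proposition~\ref{prop130610a} applied to $X\in\catd_+(R)$ yields \eqref{thm130612a1}$\iff$\eqref{thm130612a2} (the $\catdfb$-vs-$\catdf$ discrepancy vanishes because $K(\x)$ is perfect and $X$ is bounded). Similarly, Proposition~\ref{prop130613b} together with Remark~\ref{rmk140112a}---which exploits self-duality of $K(\x)$ to trade the Koszul-$\Rhom$ condition for the Koszul-tensor condition---gives \eqref{thm130612a1}$\iff$\eqref{thm130612a3}.

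For the equivalence with~\eqref{thm130612a4}, I would first establish the natural isomorphism
\begin{equation*}
\Lotimes{K(\x)}{X}\ \simeq\ \Lotimes[\Comp Ra]{K^{\Comp Ra}(\x)}{\LLL aX}
\end{equation*}
in $\catd(\Comp Ra)$. Using Fact~\ref{fact130619b'} to rewrite $\LL a-\simeq\Rhom{\RG aR}-$, perfectness of $K(\x)$ to slide Hom past tensor, and the fact that $\HH_*(\Lotimes{K(\x)}{X})$ is annihilated by $\fa$ (so that $\Lotimes{K(\x)}{X}$ coincides with its own derived $\fa$-completion), one gets $\Lotimes{K(\x)}{\LL aX}\simeq\Lotimes{K(\x)}{X}$; the identification $K^{\Comp Ra}(\x)\simeq\Lotimes{K(\x)}{\Comp Ra}$ together with the forgetful/$\LLL a$ compatibility recorded in Section~\ref{sec140109b} then upgrades this to the displayed $\Comp Ra$-form. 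Granted the bridge, \eqref{thm130612a4}$\Rightarrow$\eqref{thm130612a1} follows quickly: tensoring a complex in $\catdfb(\Comp Ra)$ with the perfect $K^{\Comp Ra}(\x)$ stays in $\catdfb(\Comp Ra)$, and because the result has $\fa$-annihilated homology, finite generation over $\Comp Ra$ is equivalent to finite generation over $R$.

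The main obstacle is the reverse implication \eqref{thm130612a1}$\Rightarrow$\eqref{thm130612a4}. My strategy is to re-apply the already-proven \eqref{thm130612a1}$\iff$\eqref{thm130612a2} \emph{over} $S:=\Comp Ra$ with the ideal $\fa S$ to the complex $Y:=\LLL aX$; boundedness $Y\in\catdb(S)$ follows from $\LL a-\simeq\Rhom{\RG aR}-$ together with the finite projective dimension of $\RG aR$ in Fact~\ref{fact130619b}. The bridge converts~\eqref{thm130612a1} into $\Lotimes[S]{K^S(\x)}{Y}\in\catdfb(S)$, and Proposition~\ref{prop130610a} applied to $Y$ over $S$ then produces $\Lotimes[S]{(S/\fa S)}{Y}\in\catdf(S)$. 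The final and most delicate step is a derived Nakayama argument: since $Y$ is derived $\fa S$-complete and bounded, finite generation of $\Lotimes[S]{(S/\fa S)}{Y}$ must be upgraded to finite generation of $Y$. I would execute this with the DG-algebra technique used in the proof of Proposition~\ref{prop130610a}\eqref{prop130610a5}$\Rightarrow$\eqref{prop130610a1}: build a minimal degree-wise finite semifree resolution $G\to \Lotimes[S]{K^S(\x)}{Y}$ over the DG algebra $K^S(\x)$, where degree-wise finiteness of $G$ is forced by minimality combined with the finiteness of $\Lotimes[K^S(\x)]{(S/\fa S)}{G}\simeq\Lotimes[S]{(S/\fa S)}{Y}$, and then compute $Y\simeq \Lotimes[K^S(\x)]{S}{G}$ by cancellation to extract the conclusion $Y\in\catdfb(S)$.
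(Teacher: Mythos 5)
Your handling of the equivalence of conditions~\eqref{thm130612a1}, \eqref{thm130612a2}, and \eqref{thm130612a3} matches the paper's reduction to Propositions~\ref{prop130610a} and~\ref{prop130613b}. For the equivalence with~\eqref{thm130612a4} you take a route parallel to but distinct from the paper's: your bridge is the tensor-side isomorphism $\Lotimes{K^R(\underline{x})}{X}\simeq\Lotimes[\Comp Ra]{K^{\Comp Ra}(\underline{x})}{\LLL aX}$, whereas the paper imports the Hom-side bridge $\Rhom{R/\fa}{X}\simeq\Rhom[\Comp Ra]{\Comp Ra/\fa\Comp Ra}{\LLL aX}$ from~\cite[Lemma~2.5]{shaul:tpdcf}. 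Your bridge isomorphism and your deduction of~\eqref{thm130612a1} from~\eqref{thm130612a4} look sound.

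The implication from~\eqref{thm130612a1} to~\eqref{thm130612a4}, however, has a genuine gap at the derived Nakayama step. The proposed cancellation $Y\simeq\Lotimes[K^S(\underline{x})]{S}{G}$, with $S=\Comp Ra$ and $G$ a semi-free $K^S(\underline{x})$-resolution of $\Lotimes[S]{K^S(\underline{x})}{Y}$, is not defined: $S$ is not a DG module over the Koszul DG algebra $K^S(\underline{x})$. The only DG algebra map from $K^S(\underline{x})$ to a complex concentrated in degree $0$ is the augmentation $K^S(\underline{x})\to S/\fa S$; a map $K^S(\underline{x})\to S$ that restricts to the identity on $S$ would have to send each $x_i$ to $0$, which is impossible. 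The DG device in the proof of Proposition~\ref{prop130610a}, from condition (v) to condition (i), passes \emph{down} from the Koszul condition to the $R/\fa$ condition precisely because $R/\fa$ \emph{is} a DG $K$-module; there is no analogous device for passing \emph{up} from $\Lotimes[S]{K^S(\underline{x})}{Y}$ to $Y$, and indeed such a passage is false in general. For instance, if $(R,\m,k)$ is a non-artinian local ring and $\underline{x}$ generates $\m$, then $\Lotimes{K^R(\underline{x})}{E_R(k)}\in\catdfb(R)$ while $E_R(k)\notin\catdf(R)$; what saves the situation in your setting is the derived $\fa S$-completeness of $Y$, which you note but then never actually use. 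The missing ingredient is precisely the ``Nakayama lemma for cohomologically complete complexes,'' which the paper invokes in its $\Rhom$ form from~\cite[Lemma~3.8]{yekutieli:ccc}. You should either cite that lemma or its tensor-side analogue, or supply an argument that genuinely exploits the completeness of $Y$.
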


\begin{proof}
Since $X\in\catdb(R)$, we have $\Lotimes{K^R(\x)}X\in\catdb(R)$, as well.
Thus, the equivalence of conditions~\eqref{thm130612a1}--\eqref{thm130612a3} is  from Propositions~\ref{prop130610a} and~\ref{prop130613b}. 

In preparation for the rest of the proof, we note that $\LL aX\in\catdb(R)$. Indeed, we have $X\in\catdb(R)$ by assumption, and
$\LL aX\simeq\Rhom{\RG aR}{X}$, so it suffices to recall that Fact~\ref{fact130619b} implies that
$\RG aR$ has finite projective dimension over $R$.
Since the homology modules of $\LL aX$ and $\LLL aX$ are isomorphic,
it follows that $\LLL aX$ is in $\catdfb(\Comp Ra)$ if and only if it 
is in $\catdf(\Comp Ra)$.
Furthermore,  the natural morphism $\LLL aX\to\LLL a{\LL aX}$ is an isomorphism in $\catd(\Comp Ra)$ by part (ii) of the Corollary
to~\cite[Theorem (0.3)*]{lipman:lhcs}. In the notation of~\cite{yekutieli:ccc}, this means that $\LL aX\in\mathsf{D}(\mathsf{Mod}R)^{\text{b}}_{\text{$\fa$-com}}$.
Next, \cite[Lemma 2.5]{shaul:tpdcf} provides the following isomorphism  over $\Comp Ra/\fa\Comp Ra\cong R/\fa$,
hence, over $R$ and $\Comp Ra$:
\begin{align*}
\Rhom{R/\fa}{X}
&\simeq\Rhom[\Comp Ra]{\Comp Ra/\fa\Comp Ra}{\LLL a{X}}.
\end{align*}

$\eqref{thm130612a3}\iff\eqref{thm130612a4}$.
From the above isomorphism, we know  that $\Rhom{R/\fa}{X}\in\catdf(R)$ if and only if  $\Rhom[\Comp Ra]{\Comp Ra/\fa\Comp Ra}{\LLL a{X}}\in\catdf(R)$.
As the homology modules of this complex are annihilated by $\fa$,
it is in $\catdf(R)$  if and only if it is in $\catdf(\Comp Ra)$.
We conclude from~\cite[Lemma 3.8]{yekutieli:ccc} that 
$\Rhom[\Comp Ra]{\Comp Ra/\fa\Comp Ra}{\LLL a{X}}\in\catdf(\Comp Ra)$
if and only if
$\LLL aX\in\catdf(\Comp Ra)$,
i.e., if and only $\LLL aX\in\catdfb(\Comp Ra)$, as desired.
\end{proof}

\begin{disc}
For $X\in\catd_b(R)$, the equivalent conditions in Theorem~\ref{thm130612a} 
are equivalent to the conditions in Propositions~\ref{prop130610a} and~\ref{prop130613b}; see the first paragraph of the proof of the theorem.
We resist the temptation to list these conditions explicitly. On the other hand, it is worth noting that 
the same reasoning shows that these conditions are also equivalent to the
following:
\begin{enumerate}[(i)]
\item One has $\LLL bX\in\catdfb(\Comp Rb)$ for all ideals $\fb\supseteq\fa$;
\item One has $\LLL pX\in\catdfb(\Comp Rp)$ for all prime ideals $\p\VE(\fa)$.
\end{enumerate}
\end{disc}

We are now prepared to define $\fa$-adic finiteness.

\begin{defn}\label{def120925d}
A complex $X\in\catdb(R)$ is \emph{$\mathfrak{a}$-adically finite} if it satisfies the equivalent conditions of Theorem~\ref{thm130612a} 
and $\operatorname{supp}_R(X) \subseteq \operatorname{V}(\mathfrak{a})$.
Let $\mathcal{C}_{\fa}(R)$ denote the full subcategory of $\catd(R)$ consisting of $\fa$-adically finite $R$-complexes.
\end{defn}

\begin{disc}\label{disc140712a}
Note that, in the next definition, the condition $\operatorname{supp}_R(X) \subseteq \operatorname{V}(\mathfrak{a})$
is equivalent to $\operatorname{Supp}_R(X) \subseteq \operatorname{V}(\mathfrak{a})$
by Proposition~\ref{prop130528c}\eqref{prop130528c1}.
In other words, this is equivalent to $\supp_R(\HH_i(X))\subseteq\VE(\fa)$ for all $i$. 
Thus, $X$ is $\fa$-adically finite if and only if it is in $\catdb(R)$ and $\fa$-cofinite, in the language of~\cite{hartshorne:adc};
see~\cite[Theorem 5.1]{hartshorne:adc}.
\end{disc}

\begin{prop}\label{prop130528a}
Let $X\in\catd_{\text b}(R)$.
\begin{enumerate}[\rm(a)]
\item\label{prop130528a1}
Then $X$ is $0$-adically finite if and only if $X\in\catdfb(R)$.
\item\label{prop130528a2}
Assume that $(R,\m,k)$ is local. 
Then $X$ is $\m$-adically finite if and only if each homology module $\HH_i(X)$ is artinian.
\end{enumerate}
\end{prop}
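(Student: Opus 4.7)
Part~\eqref{prop130528a1} is essentially formal: setting $\fa=0$ gives $R/\fa=R$, so $\Lotimes{R/\fa}{X}\simeq X$, and the support condition $\supp_R(X)\subseteq\VE(0)=\spec(R)$ is automatic. Theorem~\ref{thm130612a}\eqref{thm130612a2} then reduces $0$-adic finiteness of $X\in\catdb(R)$ to $X\in\catdf(R)$, which combined with boundedness is exactly $X\in\catdfb(R)$. For part~\eqref{prop130528a2}, the plan is to use the minimal injective resolution of $X$ for the forward direction, where Bass numbers control everything, and a Koszul-complex computation for the converse.

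For the forward direction of~\eqref{prop130528a2}, suppose $X$ is $\m$-adically finite, and fix a minimal injective resolution $X\xra\simeq I$. The condition $\supp_R(X)\subseteq\{\m\}$ combined with Proposition~\ref{lem130622a} forces each $I_i\cong E_R(k)^{(\mu_i)}$ for some set $\mu_i$. Minimality ensures $\ker(\partial_i^I)$ is essential in $I_i$, so every simple submodule of $I_i$ (in particular all of $\soc(I_i)$) lies in $\ker(\partial_i^I)$; consequently the induced differentials on $\Hom k I\simeq\Rhom k X$ vanish, giving
$$\HH_i(\Rhom k X)\cong\Hom{k}{I_i}\cong(0:_{I_i}\m)\cong k^{(\mu_i)}.$$
By Theorem~\ref{thm130612a} and Proposition~\ref{prop130613b}, $\m$-adic finiteness of $X$ forces $\Rhom k X\in\catdf(R)$, so each $\mu_i$ is finite. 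Hence every $I_i$ is artinian, and $\HH_i(X)\cong\HH_i(I)$ is artinian as a subquotient of an artinian module.

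For the converse, assume each $\HH_i(X)$ is artinian. Since artinian modules over a local ring are $\m$-torsion, $\Supp_R(X)\subseteq\{\m\}$, and hence $\supp_R(X)\subseteq\VE(\m)$ by Proposition~\ref{prop130528c}\eqref{prop130528c1}. To verify condition~\eqref{thm130612a1} of Theorem~\ref{thm130612a}, let $K=K^R(\underline{x})$ for a generating sequence $\underline{x}=x_1,\dots,x_n$ of $\m$. Using soft truncation and induction on $\amp(X)$ (as in the proof of Proposition~\ref{prop130610a}), the argument reduces to the case $X\simeq M$ with $M$ a single artinian module. Then $K\otimes_R M$ is a bounded complex whose components $K_i\otimes M\cong M^{\binom{n}{i}}$ are artinian, so each Koszul homology module $\HH_j(K\otimes M)$ is artinian as a subquotient; since it is also annihilated by $\m$, it is an artinian $k$-vector space, hence finite-dimensional over $k$. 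Thus $\Lotimes K X\in\catdfb(R)$.

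The main obstacle I anticipate is the organizational one of selecting the correct equivalent condition from Theorem~\ref{thm130612a} for each direction: the forward direction naturally uses the $\Rhom k-$ formulation (so that the Bass numbers become visible via the minimal injective resolution), while the converse naturally uses the Koszul-complex formulation (where artinianness passes cleanly to subquotients, and Koszul homology on a generating sequence of $\m$ is automatically a $k$-vector space). Once this choice is made, the essential inputs reduce to closure of artinian modules under subquotients and the fact that an artinian vector space over a field is finite-dimensional.
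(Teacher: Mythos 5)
Your proof is correct. Part~(a) is handled the same way as in the paper (the paper phrases it with $\Rhom{R/(0)}{X}\simeq X$ while you use $\Lotimes{R/0}{X}\simeq X$, but by Theorem~\ref{thm130612a} these are interchangeable), and the direction ``$\m$-adically finite $\implies$ artinian homology'' of~(b) follows the paper's argument almost verbatim: you both use the minimal injective resolution $X\res J$, $J_i\cong E^{(\mu_i)}$, and extract finiteness of the $\mu_i$ from $\Rhom kX\in\catdf(R)$. (You spell out the vanishing of the induced differentials on $\Hom k J$, which the paper leaves implicit in the identity $\mu_i=\rank_k\HH_i(\Rhom kX)$; your observation that the image of any map $k\to J_i$ is a simple submodule contained in the essential kernel is exactly why this holds.)

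The converse direction of~(b) is where you diverge. The paper works with injective resolutions: it observes that each artinian $\HH_i(X)$ embeds into some $E^{(\mu_i)}$ with $\mu_i$ finite, invokes the construction of~\cite[2.6.I]{foxby:hacr} to build an injective resolution $X\res I$ with each $I_j\cong E^{(\lambda_j)}$ for finite $\lambda_j$, and then reads off that $\Rhom kX\simeq\Hom kI$ has finite-dimensional components. You instead verify the Koszul-complex criterion~\eqref{thm130612a1} directly: after reducing by soft truncation to a single artinian module $M$, you note that $\HH_j(K\otimes_R M)$ is artinian (as a subquotient of $M^{\binom{n}{j}}$) and is killed by $\m$ (a standard Koszul fact), hence is a finite-dimensional $k$-vector space. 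Your route is a bit more elementary in that it avoids constructing a special injective resolution of the whole complex and instead leans on closure of artinian modules under subquotients plus the $\m$-torsion property of Koszul homology; the paper's route produces the Bass-number formulation $\Rhom kX\in\catdf(R)$ more explicitly. Both are valid proofs of the same statement.
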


\begin{proof}
\eqref{prop130528a1}
This follows by definition, due to the isomorphism $\Rhom{R/(0)}{X}\simeq X$ and the equality $\VE(0)=\spec(R)$.

\eqref{prop130528a2}
For one implication, assume that each $\HH_i(X)$ is artinian.
It follows that for each prime ideal $\p\neq \m$, we have $\HH_i(X)_\p=0$, and hence $X_\p\simeq 0$.
In other words, we have $\Supp_R(X)\subseteq \VE(\m)$. By Proposition~\ref{prop130528c}\eqref{prop130528c1},
this implies that $\supp_R(X)\subseteq\VE(\m)$.
Furthermore, we have $\HH_i(X)\subseteq E^{(\mu_i)}$ for some integer $\mu^i$ where $E=E_R(k)$.
From the construction of  injective resolutions, say in~\cite[2.6.I]{foxby:hacr}, it follows that there is an injective
resolution $X\res I$ such that each $I_j$ is of the form $E^{(\lambda_j)}$ for some integer $\lambda_j$.
From this, we conclude that the complex $\Rhom kX\simeq\Hom kI$ is a bounded above complex of modules of the form
$\Hom k{I_j}\cong\Hom k{E^{(\lambda_j)}}\cong k^{(\lambda_j)}$.
In particular, each module $\HH_i(\Rhom kX)$ is a finite dimensional vector space over $k$,
so $X$ is $\m$-adically finite.

For the converse, assume that $X$ is $\m$-adically finite.
The condition $\supp_R(X)\subseteq\VE(\m)$ implies that 
the minimal injective resolution $X\res J$ consists of direct sums of copies of $E$ by Proposition~\ref{lem130622a}.
Moreover, we have $J_i\cong E^{(\mu_{i})}$ for each $i\in\bbz$, where
$\mu_i=\rank_k(\HH_i(\Rhom kX))<\infty$;
the finiteness is from the adic finiteness assumption on $X$.
Hence, $X$ is isomorphic in $\catd(R)$ to a complex of artinian $R$-modules, and it follows that
each of its homology modules is artinian, as desired.
\end{proof}

\begin{prop}\label{prop140112a}
The category $\mathcal{C}_{\fa}(R)$  is triangulated and thick.
\end{prop}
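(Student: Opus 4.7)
The plan is to verify the three properties that together say $\mathcal{C}_{\fa}(R)$ is a thick triangulated subcategory of $\catd(R)$: closure under shifts, closure under completing two-out-of-three in any distinguished triangle, and closure under direct summands. In each case the three defining features of $\fa$-adic finiteness, namely membership in $\catdb(R)$, the equivalent conditions of Theorem~\ref{thm130612a} (I will use condition~\eqref{thm130612a1}, namely $\Lotimes{K^R(\x)}{X}\in\catdfb(R)$ for some fixed generating sequence $\x$ of $\fa$), and the support containment $\supp_R(X)\subseteq\VE(\fa)$, can be checked separately using tools already established.

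Closure under shifts is immediate since each of the three conditions is patently invariant under $\shift^i$. For closure under triangles, suppose $X\to Y\to Z\to$ is a distinguished triangle in $\catd(R)$ with two of $X,Y,Z$ in $\mathcal{C}_\fa(R)$. Boundedness of the remaining vertex follows from the long exact sequence in homology; the support containment of the third vertex follows from Proposition~\ref{prop140710g}, which bounds $\supp_R$ of any vertex by the union of $\supp_R$ of the other two. For condition~\eqref{thm130612a1} of Theorem~\ref{thm130612a}, apply the exact functor $\Lotimes{K^R(\x)}{-}$ to obtain a distinguished triangle in $\catd(R)$ whose terms are $\Lotimes{K^R(\x)}{X}$, $\Lotimes{K^R(\x)}{Y}$, $\Lotimes{K^R(\x)}{Z}$; two of these lie in $\catdfb(R)$ by assumption, and since $\catdfb(R)$ is itself a thick subcategory of $\catd(R)$ (a standard fact, as $R$ is noetherian), so is the third.

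For closure under direct summands, suppose $X\oplus X'\in\mathcal{C}_\fa(R)$. Boundedness of $X$ is immediate since $\HH_i(X)$ is a summand of $\HH_i(X\oplus X')$. By Proposition~\ref{prop140710a}, $\supp_R(X\oplus X')=\supp_R(X)\cup\supp_R(X')$, so $\supp_R(X)\subseteq\VE(\fa)$. Finally, $\Lotimes{K^R(\x)}{X}$ is a direct summand of $\Lotimes{K^R(\x)}{(X\oplus X')}\in\catdfb(R)$, and $\catdfb(R)$ is closed under summands (again using the noetherian hypothesis), giving $\Lotimes{K^R(\x)}{X}\in\catdfb(R)$.

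None of these steps should present a genuine obstacle: all three defining conditions behave well under the operations in question, and the main point is simply to invoke the appropriate transfer result (Proposition~\ref{prop140710g}, Proposition~\ref{prop140710a}, and thickness of $\catdfb(R)$) for each condition. The only mild subtlety is remembering to use the tensor-compatible formulation~\eqref{thm130612a1} rather than, say, the $\Rhom$ version~\eqref{thm130612a3}, since the functor $\Lotimes{K^R(\x)}{-}$ commutes cleanly with both triangles and direct sums and thereby handles all three closure properties uniformly.
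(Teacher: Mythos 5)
Your proof is correct and takes essentially the same approach as the paper: the paper's proof simply lists the three closure properties (shifts, two-out-of-three in triangles, direct summands) as ``straightforward facts,'' and you have supplied exactly the verifications one would give, checking each of the three defining conditions (boundedness, the Koszul-tensor finiteness condition from Theorem~\ref{thm130612a}\eqref{thm130612a1}, and the support containment) against each closure property, using Proposition~\ref{prop140710g}, Proposition~\ref{prop140710a}, and the thickness of $\catdfb(R)$ as the relevant transfer results.
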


\begin{proof}
Let $K$ denote the Koszul complex over $R$ on a finite generating sequence for $\fa$.
By definition, this follows from the next straightforward facts:
\begin{enumerate}[1.]
\item  For each $X\in\catd_{\text b}(R)$ and each $i\in\bbz$,
the complex $X$ is $\fa$-adically finite if and only if $\shift^iX$ is $\fa$-adically finite.
\item
Given a distinguished triangle $X\to Y\to Z\to$ in $\catd(R)$,
if two of the three complexes $X,Y,Z$ are $\fa$-adically finite, then so is the third.
\item
For all $X,Y\in\catd_{\text b}(R)$, the direct sum
$X\oplus Y$ is $\fa$-adically finite if and only if 
$X$ and $Y$ are both $\fa$-adically finite.\qedhere
\end{enumerate}
\end{proof}

\begin{thm}\label{thm130330a}
Let $X$ be an $\fa$-adically finite $R$-complex. If $\fb$ is an ideal of $R$ such that $\fa \subseteq \fb$, then $\RG{b}{X}$ is $\fb$-adically finite.
\end{thm}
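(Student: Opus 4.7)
The plan is to verify that $\RG bX$ satisfies the three requirements in Definition~\ref{def120925d} (boundedness, support containment, and one of the equivalent conditions of Theorem~\ref{thm130612a}). First, since $\pd_R(\RG bR)<\infty$ by Fact~\ref{fact130619b} and $X\in\catdb(R)$, the isomorphism $\RG bX\simeq\Lotimes{\RG bR}{X}$ from Fact~\ref{fact130619b'} puts $\RG bX$ in $\catdb(R)$. Next, Proposition~\ref{lem130611a} gives $\supp_R(\RG bX)=\supp_R(X)\cap\VE(\fb)\subseteq\VE(\fb)$.

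The main step is to check condition~\eqref{thm130612a3} of Theorem~\ref{thm130612a}, namely that $\Rhom{R/\fb}{\RG bX}\in\catdf(R)$. For this, consider the distinguished triangle
\begin{equation*}
\RG bX\to X\to C\to
\end{equation*}
defining the cone $C$, which lies in $\catdb(R)$. Applying $\RG b$ to the natural morphism $\RG bX\to X$ yields an isomorphism (by idempotency, i.e., Proposition~\ref{prop130619ad} applied to $\RG bX$, whose support is already in $\VE(\fb)$), so $\RG bC\simeq 0$. Proposition~\ref{lem130611a} then forces $\supp_R(C)\cap\VE(\fb)=\emptyset$, and Proposition~\ref{lem140121a} together with Fact~\ref{fact140109a} yield $\Rhom{R/\fb}{C}\simeq 0$. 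Applying $\Rhom{R/\fb}{-}$ to the above triangle therefore gives an isomorphism
\begin{equation*}
\Rhom{R/\fb}{\RG bX}\simeq\Rhom{R/\fb}{X}.
\end{equation*}

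Finally, since $X$ is $\fa$-adically finite, Theorem~\ref{thm130612a} implies $\Rhom{R/\fa}{X}\in\catdf(R)$, and Proposition~\ref{prop130613b} (implication (i)$\implies$(ii) applied to $X$, using $\fb\supseteq\fa$) upgrades this to $\Rhom{R/\fb}{X}\in\catdf(R)$. Combining the displayed isomorphism with this finiteness completes the verification of condition~\eqref{thm130612a3}, so $\RG bX$ is $\fb$-adically finite. The only real subtlety is the cone step—specifically, recognizing that idempotency of $\RG b$ forces $\RG bC\simeq 0$ and then translating this into the vanishing of $\Rhom{R/\fb}{C}$ via the small-support machinery of Section~\ref{sec130805b}; the remaining pieces are bookkeeping via the earlier propositions.
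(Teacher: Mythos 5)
Your proof is correct, and it arrives at the same pivotal isomorphism $\Rhom{R/\fb}{X}\simeq\Rhom{R/\fb}{\RG bX}$ as the paper, but via a genuinely different route. The paper's argument is more economical: it simply observes that $R/\fb$ is a $\fb$-torsion module, whence the isomorphism follows directly (essentially from the adjunction between $\RG b$ and the inclusion of torsion complexes). You instead derive this isomorphism internally from the support machinery of Sections~\ref{sec130805b}--\ref{sec140710a}: you form the cone $C$ of the canonical map $\RG bX\to X$, invoke idempotency of $\RG b$ to kill $\RG bC$, read off $\supp_R(C)\cap\VE(\fb)=\emptyset$ from Proposition~\ref{lem130611a}, and then deduce $\Rhom{R/\fb}{C}\simeq 0$ from Proposition~\ref{lem140121a} and Fact~\ref{fact140109a}. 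The support-containment check and the appeal to Proposition~\ref{prop130613b} to upgrade from $\fa$ to $\fb$ are the same in both proofs, and your explicit verification that $\RG bX\in\catdb(R)$ (via $\pd_R(\RG bR)<\infty$) is a worthwhile detail the paper leaves implicit. The trade-off is that your route is longer but stays entirely within the toolkit already built in the paper, whereas the paper's shortcut rests on an outside torsion fact stated without reference.

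One small point of precision: to conclude $\RG bC\simeq 0$ you need $\RG b(\eta_X)$ to be an isomorphism, where $\eta_X\colon\RG bX\to X$ is the canonical map. Proposition~\ref{prop130619ad}, as literally stated, says the canonical map $\eta_{\RG bX}\colon\RG b(\RG bX)\to\RG bX$ is an isomorphism. These two morphisms $\RG b(\RG bX)\to\RG bX$ coincide (a standard identity for the idempotent comonad $\RG b$), but it would be cleaner to cite directly the Corollary to~\cite[Theorem~(0.3)*]{lipman:lhcs}, which says precisely that $\RG b(\eta_X)$ is an isomorphism; this is in fact what the paper's own proof of Proposition~\ref{prop130619ad} invokes.
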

\begin{proof}
Note that we have $\supp(\RG{\fb}{X}) \subseteq \VE(\fb)$ by Proposition~\ref{lem130611a}. 
Since $X$ is $\fa$-adically finite, the complex $\Rhom{R/\fa}X$ is in $\catdf(R)$,
so Proposition~\ref{prop130613b} implies that 
$\Rhom{R/\fb}X\in\catdf(R)$.
The fact that $R/\fb$ is $\fb$-torsion implies that
$$\Rhom{R/\fb}X\simeq \Rhom{R/\fb}{\RG bX}$$ 
in $\catd(R)$.
Hence, the $R$-complex $\Rhom{R/\fb}{\RG bX}$ is in $\catdf(R)$, so $\RG bX$ is $\fb$-adically finite.
\end{proof}

We next provide an indication of how adic finiteness can give variations on previous results.
For instance, the next result compares to part of Fact~\ref{fact140109a}.

\begin{thm}\label{thm140722a}
Let $M\in\catdb(R)$ be $\fa$-adically finite.
Then one has
$\supp_R(M)=\Supp_R(M)$, and this set is Zariski-closed in $\spec(R)$.
\end{thm}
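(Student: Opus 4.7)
The plan is to bridge $M$ to a complex in $\catdfb(R)$ via the Koszul complex. Let $\underline{x}=x_1,\ldots,x_n$ be a generating sequence for $\fa$, and set $K=K^R(\underline{x})$. By Theorem~\ref{thm130612a}\eqref{thm130612a1}, $\fa$-adic finiteness of $M$ yields $\Lotimes{K}{M}\in\catdfb(R)$, so Fact~\ref{fact140109a} gives $\supp_R(\Lotimes{K}{M})=\Supp_R(\Lotimes{K}{M})=\bigcup_i\VE(\ann_R\HH_i(\Lotimes{K}{M}))$; this is a finite union of closed sets, hence Zariski-closed.

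Next I will identify both of these sets with the corresponding supports of $M$. For the small support, Proposition~\ref{fact130611a} together with $\supp_R(K)=\VE(\fa)$ (Fact~\ref{fact140109a}) and the containment $\supp_R(M)\subseteq\VE(\fa)$ from Definition~\ref{def120925d} gives
$$\supp_R(\Lotimes{K}{M})=\supp_R(K)\cap\supp_R(M)=\VE(\fa)\cap\supp_R(M)=\supp_R(M).$$
For the large support, $\Supp_R(\Lotimes{K}{M})\subseteq\Supp_R(M)$ is immediate from localizing the tensor product. For the reverse inclusion, fix $\p\in\Supp_R(M)$. Proposition~\ref{prop130528c}\eqref{prop130528c1} promotes $\supp_R(M)\subseteq\VE(\fa)$ to $\Supp_R(M)\subseteq\VE(\fa)$, so $\p\supseteq\fa$, and localizing at $\p$ gives $\supp_{R_\p}(M_\p)\subseteq\VE(\fa R_\p)=\supp_{R_\p}(K_\p)$. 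Lemma~\ref{lem130318aqq} applied over $R_\p$ now shows $M_\p\simeq 0\iff\Lotimes[R_\p]{K_\p}{M_\p}\simeq 0$; since $M_\p\not\simeq 0$ we conclude $\p\in\Supp_R(\Lotimes{K}{M})$.

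Stringing everything together produces $\supp_R(M)=\supp_R(\Lotimes{K}{M})=\Supp_R(\Lotimes{K}{M})=\Supp_R(M)$, with this common set Zariski-closed by the first paragraph. The main (mild) obstacle is checking the hypothesis of Lemma~\ref{lem130318aqq} after localization; this is handled by upgrading the small-support condition on $M$ to the large-support condition via Proposition~\ref{prop130528c}\eqref{prop130528c1}.
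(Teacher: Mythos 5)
Your proof is correct, and the first half (deducing $\supp_R(M)=\supp_R(\Lotimes{K}{M})=\Supp_R(\Lotimes{K}{M})$ is Zariski-closed from $\Lotimes{K}{M}\in\catdfb(R)$) coincides with the paper's. The endgame is genuinely different: the paper finishes in one line by invoking Proposition~\ref{prop130528c}\eqref{prop130528c2} — since $\supp_R(M)$ is closed and the closures of $\supp_R(M)$ and $\Supp_R(M)$ agree, one gets $\supp_R(M)\subseteq\Supp_R(M)\subseteq\ol{\Supp_R(M)}=\ol{\supp_R(M)}=\supp_R(M)$. You instead establish $\Supp_R(M)=\Supp_R(\Lotimes{K}{M})$ directly: the forward inclusion by localizing the derived tensor, and the reverse by picking $\p\in\Supp_R(M)$, upgrading $\supp_R(M)\subseteq\VE(\fa)$ to $\Supp_R(M)\subseteq\VE(\fa)$ via Proposition~\ref{prop130528c}\eqref{prop130528c1}, localizing at $\p$ to obtain $\supp_{R_\p}(M_\p)\subseteq\VE(\fa R_\p)=\supp_{R_\p}(K_\p)$, and applying Lemma~\ref{lem130318aqq} over $R_\p$ to see that $M_\p\not\simeq 0$ forces $(\Lotimes{K}{M})_\p\not\simeq 0$. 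This is correct (the passage $\supp_R(M)\subseteq\VE(\fa)\Rightarrow\supp_{R_\p}(M_\p)\subseteq\VE(\fa R_\p)$ is justified by Proposition~\ref{prop140111d}), just longer. What your route buys is a bit more information along the way: it exhibits the equality $\Supp_R(M)=\Supp_R(\Lotimes{K}{M})$ explicitly, showing that tensoring with $K$ preserves the large support for adically finite complexes, whereas the paper's closure argument only shows the two large supports have the same closure. The paper's route is tighter and avoids localization entirely in the second half.
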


\begin{proof}
Let $K$ be the Koszul complex over $R$ on a generating sequence for $\fa$.
Since $M$ is $\fa$-adically finite, we have $\Lotimes KM\in\catdfb(R)$, so the first equality in the next sequence is by Fact~\ref{fact140109a}.
\begin{align*}
\Supp_R(\Lotimes KM)
&=\supp_R(\Lotimes KM)
=\supp_R(K)\bigcap\supp_R(M)
=\supp_R(M)
\end{align*}
The second equality is from Proposition~\ref{fact130611a} and the third equality follows from the conditions
$\supp_R(K)=\VE(\fa)\supseteq\supp_R(M)$; see Fact~\ref{fact140109a}.
Note that the condition $\Lotimes KM\in\catdfb(R)$ implies that $\Supp_R(\Lotimes KM)=\supp_R(M)$ is Zariski-closed in
$\spec(R)$; see
Fact~\ref{fact140109a}.
From this, we have
\begin{align*}
\ol{\supp_R(M)}
&=\supp_R(M)
\subseteq\Supp_R(M)
\subseteq\ol{\Supp_R(M)}
=\ol{\supp_R(M)}
\end{align*}
by Fact~\ref{fact140109a} and Proposition~\ref{prop130528c}\eqref{prop130528c2}.
\end{proof}

The next result compares to Propositions~\ref{fact130611a}, \ref{lem140121a}, and~\ref{cor130602a}.

\begin{thm}\label{fact130611ass}
If $X,M\in\catd_-(R)$ such that $M$ is $\fa$-adically finite, then 
$$\supp_{R}(\Rhom{M}{X}) \bigcap\VE(\fa)= \supp_R(M) \bigcap \supp_{R}(X).$$
\end{thm}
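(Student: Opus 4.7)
The plan is to reduce the statement to Proposition~\ref{lem140121a}, which handles the case when one of the arguments is in $\catdf_+(R)$. The key idea is that while $M$ itself need not be in $\catdfb(R)$, the complex $\Lotimes KM$ \emph{is} in $\catdfb(R)$ by $\fa$-adic finiteness, where $K=K^R(\x)$ is the Koszul complex on a generating sequence $\x=x_1,\ldots,x_n$ for $\fa$. We will use Hom-tensor adjointness together with the self-duality of $K$ to move $K$ from one slot to the other.

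First, I would recall the self-duality isomorphism $\Rhom K{-}\simeq\shift^{-n}\Lotimes K{-}$, which holds because $K$ is a bounded complex of finite free $R$-modules with $\Hom KR\cong\shift^{-n}K$. Combined with adjointness, this yields the chain of isomorphisms
\begin{align*}
\Rhom{\Lotimes KM}{X}
&\simeq\Rhom K{\Rhom MX}
\simeq\shift^{-n}\Lotimes K{\Rhom MX}
\end{align*}
in $\catd(R)$. In particular the two outer complexes have the same small support.

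Second, I would compute both sides of this equality of supports. For the left-hand side, since $M$ is $\fa$-adically finite we have $\Lotimes KM\in\catdfb(R)$ by Theorem~\ref{thm130612a}, so Proposition~\ref{lem140121a} applies (using $X\in\catd_-(R)$) to give
$$\supp_R(\Rhom{\Lotimes KM}{X})=\supp_R(\Lotimes KM)\bigcap\supp_R(X).$$
Proposition~\ref{fact130611a} and Fact~\ref{fact140109a} then give $\supp_R(\Lotimes KM)=\VE(\fa)\bigcap\supp_R(M)=\supp_R(M)$, the last equality because $\supp_R(M)\subseteq\VE(\fa)$ by Definition~\ref{def120925d}. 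For the right-hand side, shifts do not alter support, and Proposition~\ref{fact130611a} gives
$$\supp_R(\shift^{-n}\Lotimes K{\Rhom MX})=\VE(\fa)\bigcap\supp_R(\Rhom MX).$$

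Combining the two computations yields the desired equality
$$\supp_R(\Rhom MX)\bigcap\VE(\fa)=\supp_R(M)\bigcap\supp_R(X).$$
There is no real obstacle here once one spots the trick of shuffling $K$ across the Hom via adjointness and self-duality; the only thing to be careful about is verifying the hypotheses of Proposition~\ref{lem140121a}, namely that $\Lotimes KM\in\catdf_+(R)$ (which uses the $\fa$-adic finiteness of $M$) and that $X\in\catd_-(R)$ (which is given).
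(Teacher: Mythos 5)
Your proof is correct and takes essentially the same route as the paper's: both hinge on the observation that $\Lotimes KM\in\catdfb(R)$ so that Proposition~\ref{lem140121a} applies, then shuffle $K$ between slots using Hom-tensor adjointness and Koszul self-duality, and finish with Proposition~\ref{fact130611a} and $\supp_R(K)=\VE(\fa)\supseteq\supp_R(M)$. The only cosmetic difference is that the paper writes the argument as a single chain of equalities of supports, whereas you first establish the isomorphism $\Rhom{\Lotimes KM}{X}\simeq\shift^{-n}\Lotimes K{\Rhom MX}$ and then take supports on each side.
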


\begin{proof}
Let $\x=x_1,\ldots,x_n$ be a generating sequence for $\fa$, and set $K:=K^R(\x)$.
By assumption, the complex $\Lotimes KM$ is in $\catdfb(R)$, so Proposition~\ref{lem140121a}
explains the fourth equality in the next sequence:
\begin{align*}
\supp_{R}(\Rhom{M}{X}) \bigcap\VE(\fa)
&=\supp_R(\Lotimes K{\Rhom MX})\\
&=\supp_R(\Rhom K{\Rhom MX})\\
&=\supp_R(\Rhom{\Lotimes KM}{X})\\
&=\supp_R(\Lotimes KM)\bigcap\supp_R(X)\\
&=\supp_R(K)\bigcap\supp_R(M)\bigcap\supp_R(X)\\
&=\supp_R(M)\bigcap\supp_R(X)
\end{align*}
The first and fifth equalities are  by Proposition~\ref{fact130611a}; this uses the following conditions
$\supp_R(M)\subseteq\VE(\fa)=\supp_R(K)$, which also explain the last equality (see Fact~\ref{fact140109a}).
The second equality follows from the self-dual nature of the Koszul complex which manifests as the first isomorphism in the next sequence.
\begin{align*}
\Lotimes K{\Rhom MX}
\simeq\shift^n\Rhom K{\Rhom MX}
\end{align*}
The remaining equality  is from Hom-tensor adjointness.
\end{proof}

The next result augments~\cite[Corollary 5.8]{benson:lcstc}.

\begin{cor}\label{cor140722a}
If $X\in\catd_-(R)$ and let $M\in\catdb(R)$ be $\fa$-adically finite. Then one has 
$\Rhom{M}{X}\simeq 0$ if and only if  $\supp_R(M) \bigcap \supp_{R}(X)=\emptyset$.
\end{cor}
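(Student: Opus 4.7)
The plan is to upgrade Theorem~\ref{fact130611ass} from a statement about the intersection of $\supp_R(\Rhom MX)$ with $\VE(\fa)$ to a statement about $\supp_R(\Rhom MX)$ itself, and then invoke the fact that a complex is zero in $\catd(R)$ precisely when its small support is empty (Fact~\ref{fact140109a}).

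The first step is to verify that $\supp_R(\Rhom MX)\subseteq\VE(\fa)$. Because $M$ is $\fa$-adically finite, we have $\supp_R(M)\subseteq\VE(\fa)$, and Proposition~\ref{prop130528c}\eqref{prop130528c1} promotes this to $\Supp_R(M)\subseteq\VE(\fa)$. Thus for any prime $\p\notin\VE(\fa)$ we have $M_\p\simeq 0$, whence
$$(\Rhom{M}{X})_\p\simeq\Rhom[R_\p]{M_\p}{X_\p}\simeq 0,$$
so $\p\notin\Supp_R(\Rhom MX)\supseteq\supp_R(\Rhom MX)$, as needed.

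The second step is to combine this containment with Theorem~\ref{fact130611ass}:
$$\supp_R(\Rhom{M}{X})=\supp_R(\Rhom{M}{X})\bigcap\VE(\fa)=\supp_R(M)\bigcap\supp_R(X).$$
Now apply Fact~\ref{fact140109a}: $\Rhom MX\simeq 0$ in $\catd(R)$ if and only if $\supp_R(\Rhom MX)=\emptyset$, and by the displayed equality this holds if and only if $\supp_R(M)\cap\supp_R(X)=\emptyset$.

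There is no real obstacle here; the only subtle point is recognizing that $\fa$-adic finiteness of $M$, via the passage from small to large support, automatically forces $\supp_R(\Rhom MX)$ to live in $\VE(\fa)$, so that Theorem~\ref{fact130611ass} already computes the whole small support rather than only its intersection with $\VE(\fa)$.
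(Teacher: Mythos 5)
Your Step 1 contains a genuine error. The isomorphism
$$(\Rhom{M}{X})_\p\simeq\Rhom[R_\p]{M_\p}{X_\p}$$
requires $M\in\catdf_+(R)$ (a degreewise finite free resolution is needed to commute $\Hom$ with localization), but an $\fa$-adically finite complex need not have finitely generated homology. Consequently the intermediate claim $\supp_R(\Rhom MX)\subseteq\VE(\fa)$, and with it the displayed equality $\supp_R(\Rhom{M}{X})=\supp_R(M)\cap\supp_R(X)$, is simply \emph{false} in this generality. Indeed the paper's own Example~\ref{ex140722a} refutes it: with $(R,\m,k)$ local and non-artinian, $\fa=\m$, and $M=X=E_R(k)$, the module $E_R(k)$ is $\m$-adically finite, $\supp_R(E_R(k))=\{\m\}$, yet $\Rhom{E_R(k)}{E_R(k)}\simeq\Comp Rm$ has $\supp_R(\Comp Rm)=\spec(R)\not\subseteq\VE(\m)$.

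The direction $\Rhom MX\simeq 0\implies\supp_R(M)\cap\supp_R(X)=\emptyset$ does follow immediately from Theorem~\ref{fact130611ass} (intersecting with $\VE(\fa)$ costs nothing when the left side is empty), and that part of your argument is recoverable. But for the converse the paper proceeds differently: it invokes Theorem~\ref{thm140722a} to see that $\supp_R(M)$ is Zariski-closed, so $\supp_R(M)\cap\supp_R(X)=\emptyset$ gives $\ol{\supp_R(M)}\cap\supp_R(X)=\emptyset$, and then applies Corollary~5.8 of Benson--Iyengar--Krause (\cite{benson:lcstc}), which asserts vanishing of $\Rhom MX$ under exactly that closure hypothesis. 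Some such external input is needed precisely because $\supp_R(\Rhom MX)$ is not controlled by $\VE(\fa)$ here.
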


\begin{proof}
For one implication, assume that $\supp_R(M) \bigcap \supp_{R}(X)=\emptyset$.
Theorem~\ref{thm140722a} implies that $\supp_R(M)$ is Zariski-closed, so we have
$\overline{\supp_R(M)} \bigcap \supp_{R}(X)=\emptyset$.
The desired conclusion $\Rhom{M}{X}\simeq 0$ now follows from~\cite[Corollary 5.8]{benson:lcstc}.

For the converse, if $\Rhom{M}{X}\simeq 0$, then
Theorem~\ref{fact130611ass} implies that
\begin{align*}
\supp_R(M) \bigcap \supp_{R}(X)
&=\supp_{R}(\Rhom{M}{X}) \bigcap\VE(\fa)=\emptyset
\end{align*}
as desired.
\end{proof}

Compare the next result to Propositions~\ref{lem130611a} and~\ref{lem130611ax}.

\begin{cor}\label{cor140121a}
Let$X\in\catd_-(R)$. Then one has
$$\supp_{R}(\LL aX) \bigcap\VE(\fa)= \VE(\fa) \bigcap \supp_{R}(X).$$
Furthermore, we have $\LL aX\simeq 0$ if and only if $\VE(\fa)\bigcap\supp_R(X)=\emptyset$.
\end{cor}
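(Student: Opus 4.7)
The plan is to reduce both statements to previously proved results by taking $M=\RG aR$ in Theorem~\ref{fact130611ass} and Corollary~\ref{cor140722a}. The key preliminary step is to verify that $\RG aR$ itself is $\fa$-adically finite, so that those theorems apply.

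First I would check the adic finiteness of $\RG aR$. Boundedness is immediate from its identification with the \v{C}ech complex in Fact~\ref{fact130619b}, and the support condition $\supp_R(\RG aR)=\VE(\fa)$ is Proposition~\ref{lem130611a}. To verify the condition from Theorem~\ref{thm130612a}, let $K=K^R(\x)$ for some generating sequence $\x$ of $\fa$. Using Fact~\ref{fact130619b'} I would compute $\Lotimes{K}{\RG aR}\simeq\RG aK$, and since $\supp_R(K)=\VE(\fa)$ by Fact~\ref{fact140109a}, Proposition~\ref{prop130619ad} gives $\RG aK\simeq K$. Thus $\Lotimes{K}{\RG aR}\simeq K\in\catdfb(R)$, and the equivalence $\eqref{thm130612a1}\iff\eqref{thm130612a2}$ of Theorem~\ref{thm130612a} (together with the support condition) shows $\RG aR$ is $\fa$-adically finite.

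Next I would deduce the first displayed equality. Using the isomorphism $\LL aX\simeq\Rhom{\RG aR}{X}$ from Fact~\ref{fact130619b'} and applying Theorem~\ref{fact130611ass} to $M=\RG aR$, I obtain
\begin{align*}
\supp_R(\LL aX)\cap\VE(\fa)
&=\supp_R(\Rhom{\RG aR}{X})\cap\VE(\fa)\\
&=\supp_R(\RG aR)\cap\supp_R(X)\\
&=\VE(\fa)\cap\supp_R(X),
\end{align*}
where the last step invokes Proposition~\ref{lem130611a} once more.

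For the vanishing assertion I would likewise apply Corollary~\ref{cor140722a} with $M=\RG aR$: the assumption $X\in\catd_-(R)$ is exactly what that corollary requires, and since $\RG aR$ is $\fa$-adically finite with $\supp_R(\RG aR)=\VE(\fa)$, we get $\LL aX\simeq\Rhom{\RG aR}{X}\simeq 0$ if and only if $\VE(\fa)\cap\supp_R(X)=\emptyset$. I do not anticipate serious obstacles; the only point requiring care is the verification that $\RG aR$ is $\fa$-adically finite, since without it neither Theorem~\ref{fact130611ass} nor Corollary~\ref{cor140722a} applies, but the self-duality style computation $\Lotimes{K}{\RG aR}\simeq\RG aK\simeq K$ settles this cleanly.
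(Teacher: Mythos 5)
Your proposal is correct and follows essentially the same route as the paper: both reduce the two assertions to Theorem~\ref{fact130611ass} and Corollary~\ref{cor140722a} with $M=\RG aR$, after verifying that $\RG aR$ is $\fa$-adically finite. The only divergence is in that verification: the paper invokes Theorem~\ref{thm130330a} (with $\fa=0$, $\fb=\fa$, and the fact that $R$ is $0$-adically finite), whereas you check condition~(i) of Theorem~\ref{thm130612a} directly via the computation $\Lotimes{K}{\RG aR}\simeq\RG aK\simeq K\in\catdfb(R)$ using Proposition~\ref{prop130619ad}. Both are short and valid; the paper's citation of Theorem~\ref{thm130330a} is slightly more economical since that result is already established, but your direct Koszul computation is a perfectly good alternative and perhaps more self-contained.
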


\begin{proof}
The complex $M=\RG aR$ is $\fa$-adically finite by Theorem~\ref{thm130330a}, 
so Theorem~\ref{fact130611ass} implies that
\begin{align*}
\supp_{R}(\LL aX) \bigcap\VE(\fa)
&=\supp_{R}(\Rhom{\RG aR}{X}) \bigcap\VE(\fa)\\
&= \supp_R(\RG aR) \bigcap \supp_{R}(X)\\
&= \VE(\fa) \bigcap \supp_{R}(X)\\
\end{align*}
by Fact~\ref{fact130619b'}
and Proposition~\ref{lem130611a}.

From the isomorphism $\LL aX\simeq\Rhom{\RG aR}{X}$, 
Corollary~\ref{cor140722a} implies that
$\LL aX\simeq 0$ if and only if $\supp_R(\RG aR)\bigcap\supp_R(X)=\emptyset$,
that is, if and only if $\VE(\fa)\bigcap\supp_R(X)=\emptyset$, by Proposition~\ref{lem130611a}.
\end{proof}

Next, we give some examples to show that not every result for complexes in $\catdfb(R)$ extends to the $\fa$-adically finite situation.
The first one shows that one can have 
$\supp_{R}(\Rhom{M}{X}) \neq \supp_R(M) \bigcap \supp_{R}(X)$ in
Theorem~\ref{fact130611ass}, even if $R$ is $\fa$-adically complete.

\begin{ex}\label{ex140722a}
Let $(R,\m,k)$ be a local ring, and set $E=E_R(k)$. 
Then $E$ is $\m$-cofinite by Proposition~\ref{prop130528a}\eqref{prop130528a2}.
However, one has $\Rhom EE\simeq\Comp Rm$, so
\begin{gather*}
\supp_R(\Rhom EE)
=\supp_R(\Comp Rm)=\spec(R)\\
\supp_R(E)\bigcap\supp_R(E)
=\{\m\}.
\end{gather*}
See Proposition~\ref{lem130622a}.
If $R$ is not artinian, then we have $\spec(R)\neq\{\m\}$, so 
$\supp_R(\Rhom EE)\neq \supp_R(E)\bigcap\supp_R(E)$.
\end{ex}

If $M\in\catdfb(R)$ and $X,Y\in\catdb(R)$, then the evaluation morphisms
\begin{gather*}
\Lotimes{\Rhom MY}X\to\Rhom M{\Lotimes YX}\\
\Lotimes{M}{\Rhom YX}\to\Rhom{\Rhom MY}X
\end{gather*}
are isomorphisms under certain hypotheses, e.g., when $M,X,Y$ all have finite projective dimension and finite injective dimension.
The next example shows that this fails when $M$ is only assumed to be $\fa$-adicaly finite, even when $R$ is $\fa$-adically complete. 

\begin{ex}\label{ex140722b}
Let $k$ be a field, and consider the formal power series ring $R=k[\![T]\!]$ with $E=E_R(k)$ and $Q=E_R(R)$
(Note that $Q$  is the quotient field $k((T))$ of $R$.)
Proposition~\ref{lem130622a} implies that 
$\supp_R(E)=\{\m\}$ and $\supp_R(Q)=\{0\}$,
so we have $\Lotimes EQ=0$ by Proposition~\ref{fact130611a}.

From the minimal injective resolution of $R$
$$0\to R\to Q\to E\to 0$$
we have a distinguished triangle $R\to Q\to E\to$ in $\catd(R)$.
Apply $\Lotimes E-$ to obtain the next distinguished triangle.
$$\underbrace{\Lotimes ER}_{\simeq E}
\to\underbrace{\Lotimes EQ}_{\simeq 0}
\to\Lotimes EE
\to
$$
It follows that $\Lotimes EE\simeq \shift E$.
A similar computation shows  $\Rhom ER\simeq\shift^{-1}R$.

To finish the example, we compute:
\begin{gather*}
\Lotimes{\Rhom EE}E
\simeq\Lotimes RE\simeq E
\\
\Rhom E{\Lotimes EE}
\simeq\Rhom E{\shift E}\simeq\shift\Rhom EE\simeq\shift R
\\
\Lotimes{E}{\Rhom ER}
\simeq\Lotimes E{(\shift^{-1}R)}\simeq \shift^{-1} E
\\
\Rhom{\Rhom EE}R
\simeq\Rhom{R}R\simeq R.
\end{gather*}
Looking at the degrees of these complexes, it is clear that
$\Lotimes{\Rhom EE}E\not\simeq\Rhom E{\Lotimes EE}$
and
$\Lotimes{E}{\Rhom ER}\not\simeq\Rhom{\Rhom EE}R$.
\end{ex}

We end this paper with a criterion for an $\fa$-adically finite $R$-complex to satisfy the condition $\supp_R(M)=\VE(\fa)$.
It is  key for some of our work in~\cite{sather:ascfc}.

\begin{prop}\label{prop140121a}
If $M\in\catd(R)$ is $\fa$-adically finite such that $\Rhom MM\simeq\Comp Ra$,
then $\supp_R(M)=\VE(\fa)$.
\end{prop}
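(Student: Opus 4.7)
The plan is to combine the hypothesis $\Rhom MM\simeq\Comp Ra$ with the adic finiteness of $M$ via Theorem~\ref{fact130611ass}, which is tailor-made for this sort of situation.

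First, since $M$ is $\fa$-adically finite, Definition~\ref{def120925d} gives $\supp_R(M)\subseteq\VE(\fa)$, so one containment in the desired equality is free. For the reverse containment $\VE(\fa)\subseteq\supp_R(M)$, I would apply Theorem~\ref{fact130611ass} with $X=M$ (both $M$ and $X$ live in $\catd_-(R)$ and $M$ is $\fa$-adically finite). This yields
$$\supp_R(\Rhom MM)\cap\VE(\fa)=\supp_R(M)\cap\supp_R(M)=\supp_R(M).$$
Substituting the hypothesis $\Rhom MM\simeq\Comp Ra$ gives
$$\supp_R(\Comp Ra)\cap\VE(\fa)=\supp_R(M),$$
so it suffices to show $\VE(\fa)\subseteq\supp_R(\Comp Ra)$.

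The remaining step is the computation of $\supp_R(\Comp Ra)$ on $\VE(\fa)$. For $\p\in\VE(\fa)$, since $\fa\subseteq\p$ the natural map $R\to\Comp Ra\to \Comp{(R/\p)}{\bar\fa}=R/\p$ (the last equality because $\bar\fa=0$ in $R/\p$) is surjective and factors the quotient $R\to R/\p$. Because $\Comp Ra$ is $R$-flat (as $R$ is noetherian), $\Lotimes{\kappa(\p)}{\Comp Ra}\simeq\kappa(\p)\otimes_R\Comp Ra$, and tensoring the surjection $\Comp Ra\onto R/\p$ with $\kappa(\p)$ over $R$ yields a surjection
$$\kappa(\p)\otimes_R\Comp Ra\onto\kappa(\p)\otimes_R R/\p=\kappa(\p)\neq 0.$$
Hence $\Lotimes{\kappa(\p)}{\Comp Ra}\not\simeq 0$, i.e., $\p\in\supp_R(\Comp Ra)$.

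Putting these together, $\supp_R(\Comp Ra)\cap\VE(\fa)=\VE(\fa)$, and therefore $\supp_R(M)=\VE(\fa)$, as desired. The main conceptual step is the application of Theorem~\ref{fact130611ass}; everything else is essentially formal, and I do not anticipate any real obstacle beyond verifying that $\Comp Ra$ has full small support on $\VE(\fa)$, which is a short direct computation.
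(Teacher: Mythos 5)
Your proof is correct, and the key conceptual step---applying Theorem~\ref{fact130611ass} with $X = M$ and then substituting $\Rhom MM\simeq\Comp Ra$---is exactly the same as the paper's. Where you diverge is in the final step, computing $\supp_R(\Comp Ra)\cap\VE(\fa)=\VE(\fa)$. The paper deduces this from Corollary~\ref{cor140121a} applied with $X=R$ (using $\LL aR\simeq\Comp Ra$ to get $\supp_R(\Comp Ra)\cap\VE(\fa)=\supp_R(R)\cap\VE(\fa)=\VE(\fa)$), whereas you give a direct, elementary argument: since $\Comp Ra$ is flat over the noetherian ring $R$, the derived tensor $\Lotimes{\kappa(\p)}{\Comp Ra}$ is the ordinary tensor product, and the surjection $\Comp Ra\onto R/\p$ (available precisely because $\fa\subseteq\p$ forces $\Comp{(R/\p)}{\bar\fa}=R/\p$) yields $\kappa(\p)\otimes_R\Comp Ra\onto\kappa(\p)\neq 0$. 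Your route avoids Corollary~\ref{cor140121a} entirely, which is a mild simplification of the logical dependencies (that corollary in turn rests on Theorem~\ref{fact130611ass} plus the fact that $\RG aR$ is $\fa$-adically finite); the paper's version is slightly shorter in text given the corollary is already on the shelf. Both arguments are sound.
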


\begin{proof}
The containment $\supp_R(M)\subseteq\VE(\fa)$ holds by definition.
The reverse containment comes from the next sequence wherein the second step is from Theorem~\ref{fact130611ass}
and the fifth step is from Corollary~\ref{cor140121a}.
\begin{align*}
\supp_R(M)
&\supseteq\supp_R(M)\cap\VE(\fa)\\
&=\supp_R(M)\cap\supp_R(M)\cap\VE(\fa)\\
&=\supp_R(\Rhom MM)\cap\VE(\fa)\\
&=\supp_R(\Comp Ra)\cap\VE(\fa)\\
&=\supp_R(R)\cap\VE(\fa)\\
&=\VE(\fa).
\end{align*}
The other steps are straightforward or by definition.
\end{proof}

\section*{Acknowledgments}
We are grateful to Srikanth Iyengar for helpful conversations about this work,
and to the referee for thoughtful comments.

%\bibliography{../+new}
\providecommand{\bysame}{\leavevmode\hbox to3em{\hrulefill}\thinspace}
\providecommand{\MR}{\relax\ifhmode\unskip\space\fi MR }
% \MRhref is called by the amsart/book/proc definition of \MR.
\providecommand{\MRhref}[2]{%
  \href{http://www.ams.org/mathscinet-getitem?mr=#1}{#2}
}
\providecommand{\href}[2]{#2}

\end{document}